\definecolor{darkgreen}{rgb}{0,0.45,0}
\def\swvdash{\scalebox{0.7}{\rotatebox[origin=c]{135}{$\bot$}}}
\def\nevdash{\scalebox{0.7}{\rotatebox[origin=c]{225}{$\top$}}}
\def\nwvdashh{\scalebox{0.7}{\rotatebox[origin=c]{149}{$\top$}}}
\def\nwvdashhh{\scalebox{0.7}{\rotatebox[origin=c]{161}{$\top$}}}
\begin{document}

\renewcommand{\thefootnote}{\alph{footnote}}

\newcommand{\thlabel}[1]{\label{th:#1}}
\newcommand{\thref}[1]{Theorem~\ref{th:#1}}
\newcommand{\selabel}[1]{\label{se:#1}}
\newcommand{\seref}[1]{Section~\ref{se:#1}}
\newcommand{\lelabel}[1]{\label{le:#1}}
\newcommand{\leref}[1]{Lemma~\ref{le:#1}}
\newcommand{\prlabel}[1]{\label{pr:#1}}
\newcommand{\prref}[1]{Proposition~\ref{pr:#1}}
\newcommand{\colabel}[1]{\label{co:#1}}
\newcommand{\coref}[1]{Corollary~\ref{co:#1}}
\newcommand{\relabel}[1]{\label{re:#1}}
\newcommand{\reref}[1]{Remark~\ref{re:#1}}
\newcommand{\exlabel}[1]{\label{ex:#1}}
\newcommand{\exref}[1]{Example~\ref{ex:#1}}
\newcommand{\delabel}[1]{\label{de:#1}}
\newcommand{\deref}[1]{Definition~\ref{de:#1}}
\newcommand{\eqlabel}[1]{\label{eq:#1}}
\newcommand{\equref}[1]{(\ref{eq:#1})}

\newcommand{\Hom}{{\sf Hom}}
\newcommand{\End}{{\sf End}}
\newcommand{\Ext}{{\sf Ext}}
\newcommand{\Fun}{{\sf Fun}}
\newcommand{\Mor}{{\sf Mor}\,}
\newcommand{\Aut}{{\sf Aut}\,}
\newcommand{\Ann}{{\sf Ann}\,}
\newcommand{\Ker}{{\sf Ker}\,}
\newcommand{\Coker}{{\sf Coker}\,}
\newcommand{\im}{{\sf Im}\,}
\newcommand{\coim}{{\sf Coim}\,}
\newcommand{\Trace}{{\sf Trace}\,}
\newcommand{\Char}{{\sf Char}\,}
\newcommand{\Mod}{{\sf Mod}}
\newcommand{\Vect}{{\sf Vect}}
\newcommand{\VectGr}{{\sf Vect-Grph}}
\newcommand{\Alg}{{\sf Alg}}
\newcommand{\Coalg}{{\sf Coalg}}
\newcommand{\Bialg}{{\sf Bialg}}
\newcommand{\Hopf}{{\sf Hopf}}
\newcommand{\sHopf}{{\sf sHopf}}
\newcommand{\Comon}{{\sf Comon}}
\newcommand{\Mon}{{\sf Mon}}
\newcommand{\Spec}{{\sf Spec}\,}
\newcommand{\Span}{{\sf Span}\,}
\newcommand{\sgn}{{\sf sgn}\,}
\newcommand{\Id}{{\sf Id}\,}
\newcommand{\Com}{{\sf Com}\,}
\newcommand{\codim}{{\sf codim}}
\newcommand{\Mat}{{\sf Mat}}
\newcommand{\Coint}{{\rm Coint}}
\newcommand{\Incoint}{{\sf Incoint}}
\newcommand{\can}{{\sf can}}
\newcommand{\Bim}{{\sf Bim}}
\newcommand{\CAT}{{\sf CAT}}
\newcommand{\Cat}{{\sf Cat}}
\newcommand{\LMonCat}{{\sf LMonCat}}
\newcommand{\opCat}{{\sf opCat}}
\newcommand{\Grph}{{\sf Grph}}
\newcommand{\Fam}{{\sf Fam}}
\newcommand{\Maf}{{\sf Maf}}
\newcommand{\VCat}{{\textsf{$\Vv${\sf-Cat}}}}
\newcommand{\VHopf}{{\textsf{$\Vv${\sf-Hopf}}}}
\newcommand{\VsHopf}{{\textsf{$\Vv${\sf-sHopf}}}}
\newcommand{\WCat}{{\textsf{$\Ww${\sf-Cat}}}}
\newcommand{\FCat}{{\textsf{$F${\sf-Cat}}}}
\newcommand{\GCat}{{\textsf{$G${\sf-Cat}}}}
\newcommand{\VopCat}{{\textsf{$\Vv${\sf-opCat}}}}
\newcommand{\VGrph}{{\textsf{$\Vv${\sf-Grph}}}}
\newcommand{\WGrph}{{\textsf{$\Ww${\sf-Grph}}}}
\newcommand{\FGrph}{{\textsf{$F${\sf-Grph}}}}
\newcommand{\GGrph}{{\textsf{$G${\sf-Grph}}}}
\newcommand{\aGrph}{{\textsf{$\alpha${\sf-Grph}}}}
\newcommand{\VopGrph}{{\textsf{$\Vv${\sf-opGrph}}}}
\newcommand{\sign}{{\sf sign}}
\newcommand{\kar}{{\sf kar}}
\newcommand{\rad}{{\sf rad}}
\newcommand{\Rat}{{\sf Rat}}
\newcommand{\Cob}{{\sf Cob}}
\newcommand{\ev}{{\sf ev}}
\newcommand{\sd}{{\sf d}}
\def\colim{{\sf colim}\,}
\def\lim{{\sf lim}\,}
\def\tildej{\tilde{\jmath}}
\def\barj{\bar{\jmath}}

\def\Ab{\underline{\underline{\sf Ab}}}
\def\lan{\langle}
\def\ran{\rangle}
\def\ot{\otimes}
\def\bul{\bullet}
\def\ubul{\underline{\bullet}}

\def\id{\textrm{{\small 1}\normalsize\!\!1}}
\def\To{{\multimap\!\to}}
\def\bigperp{{\LARGE\textrm{$\perp$}}} 
\newcommand{\QED}{\hspace{\stretch{1}}
\makebox[0mm][r]{$\Box$}\\}

\def\AA{{\mathbb A}}
\def\BB{{\mathbb B}}
\def\CC{{\mathbb C}}
\def\DD{{\mathbb D}}
\def\EE{{\mathbb E}}
\def\FF{{\mathbb F}}
\def\GG{{\mathbb G}}
\def\HH{{\mathbb H}}
\def\II{{\mathbb I}}
\def\JJ{{\mathbb J}}
\def\KK{{\mathbb K}}
\def\LL{{\mathbb L}}
\def\MM{{\mathbb M}}
\def\NN{{\mathbb N}}
\def\OO{{\mathbb O}}
\def\PP{{\mathbb P}}
\def\QQ{{\mathbb Q}}
\def\RR{{\mathbb R}}
\def\TT{{\mathbb T}}
\def\UU{{\mathbb U}}
\def\VV{{\mathbb V}}
\def\WW{{\mathbb W}}
\def\XX{{\mathbb X}}
\def\YY{{\mathbb Y}}
\def\ZZ{{\mathbb Z}}

\def\aa{{\mathfrak A}}
\def\bb{{\mathfrak B}}
\def\cc{{\mathfrak C}}
\def\dd{{\mathfrak D}}
\def\ee{{\mathfrak E}}
\def\ff{{\mathfrak F}}
\def\gg{{\mathfrak G}}
\def\hh{{\mathfrak H}}
\def\ii{{\mathfrak I}}
\def\jj{{\mathfrak J}}
\def\kk{{\mathfrak K}}
\def\ll{{\mathfrak L}}
\def\mm{{\mathfrak M}}
\def\nn{{\mathfrak N}}
\def\oo{{\mathfrak O}}
\def\pp{{\mathfrak P}}
\def\qq{{\mathfrak Q}}
\def\rr{{\mathfrak R}}
\def\tt{{\mathfrak T}}
\def\uu{{\mathfrak U}}
\def\vv{{\mathfrak V}}
\def\ww{{\mathfrak W}}
\def\xx{{\mathfrak X}}
\def\yy{{\mathfrak Y}}
\def\zz{{\mathfrak Z}}

\def\aaa{{\mathfrak a}}
\def\bbb{{\mathfrak b}}
\def\ccc{{\mathfrak c}}
\def\ddd{{\mathfrak d}}
\def\eee{{\mathfrak e}}
\def\fff{{\mathfrak f}}
\def\ggg{{\mathfrak g}}
\def\hhh{{\mathfrak h}}
\def\iii{{\mathfrak i}}
\def\jjj{{\mathfrak j}}
\def\kkk{{\mathfrak k}}
\def\lll{{\mathfrak l}}
\def\mmm{{\mathfrak m}}
\def\nnn{{\mathfrak n}}
\def\ooo{{\mathfrak o}}
\def\ppp{{\mathfrak p}}
\def\qqq{{\mathfrak q}}
\def\rrr{{\mathfrak r}}
\def\sss{{\mathfrak s}}
\def\ttt{{\mathfrak t}}
\def\uuu{{\mathfrak u}}
\def\vvv{{\mathfrak v}}
\def\www{{\mathfrak w}}
\def\xxx{{\mathfrak x}}
\def\yyy{{\mathfrak y}}
\def\zzz{{\mathfrak z}}

\newcommand{\aA}{\mathscr{A}}
\newcommand{\bB}{\mathscr{B}}
\newcommand{\cC}{\mathscr{C}}
\newcommand{\dD}{\mathscr{D}}
\newcommand{\eE}{\mathscr{E}}
\newcommand{\fF}{\mathscr{F}}
\newcommand{\gG}{\mathscr{G}}
\newcommand{\hH}{\mathscr{H}}
\newcommand{\iI}{\mathscr{I}}
\newcommand{\jJ}{\mathscr{J}}
\newcommand{\kK}{\mathscr{K}}
\newcommand{\lL}{\mathscr{L}}
\newcommand{\mM}{\mathscr{M}}
\newcommand{\nN}{\mathscr{N}}
\newcommand{\oO}{\mathscr{O}}
\newcommand{\pP}{\mathscr{P}}
\newcommand{\qQ}{\mathscr{Q}}
\newcommand{\rR}{\mathscr{R}}
\newcommand{\sS}{\mathscr{S}}
\newcommand{\tT}{\mathscr{T}}
\newcommand{\uU}{\mathscr{U}}
\newcommand{\vV}{\mathscr{V}}
\newcommand{\wW}{\mathscr{W}}
\newcommand{\xX}{\mathscr{X}}
\newcommand{\yY}{\mathscr{Y}}
\newcommand{\zZ}{\mathscr{Z}}

\newcommand{\Aa}{\mathcal{A}}
\newcommand{\Bb}{\mathcal{B}}
\newcommand{\Cc}{\mathcal{C}}
\newcommand{\Dd}{\mathcal{D}}
\newcommand{\Ee}{\mathcal{E}}
\newcommand{\Ff}{\mathcal{F}}
\newcommand{\Gg}{\mathcal{G}}
\newcommand{\Hh}{\mathcal{H}}
\newcommand{\Ii}{\mathcal{I}}
\newcommand{\Jj}{\mathcal{J}}
\newcommand{\Kk}{\mathcal{K}}
\newcommand{\Ll}{\mathcal{L}}
\newcommand{\Mm}{\mathcal{M}}
\newcommand{\Nn}{\mathcal{N}}
\newcommand{\Oo}{\mathcal{O}}
\newcommand{\Pp}{\mathcal{P}}
\newcommand{\Qq}{\mathcal{Q}}
\newcommand{\Rr}{\mathcal{R}}
\newcommand{\Ss}{\mathcal{S}}
\newcommand{\Tt}{\mathcal{T}}
\newcommand{\Uu}{\mathcal{U}}
\newcommand{\Vv}{\mathcal{V}}
\newcommand{\Ww}{\mathcal{W}}
\newcommand{\Xx}{\mathcal{X}}
\newcommand{\Yy}{\mathcal{Y}}
\newcommand{\Zz}{\mathcal{Z}}

\def\units{{\mathbb G}_m}
\def\rightact{\hbox{$\leftharpoonup$}}
\def\leftact{\hbox{$\rightharpoonup$}}

\def\*C{{}^*\hspace*{-1pt}{\Cc}}
\def\*c{{}^*\hspace*{-1pt}{\cc}}

\def\text#1{{\rm {\rm #1}}}

\def\smashco{\mathrel>\joinrel\mathrel\triangleleft}
\def\cosmash{\mathrel\triangleright\joinrel\mathrel<}

\def\ol{\overline}
\def\ul{\underline}
\def\dul#1{\underline{\underline{#1}}}
\def\Nat{{\sf Nat}}
\def\Set{{\sf Set}}
\def\MCl{\dul{\rm MCl}}

\renewcommand{\subjclassname}{\textup{2000} Mathematics Subject
     Classification}

\newtheorem{proposition}{Proposition}[section] 
\newtheorem{lemma}[proposition]{Lemma}
\newtheorem{corollary}[proposition]{Corollary}
\newtheorem{theorem}[proposition]{Theorem}

\theoremstyle{definition}
\newtheorem{Definition}[proposition]{Definition}
\newtheorem{definition}[proposition]{Definition}
\newtheorem{example}[proposition]{Example}
\newtheorem{examples}[proposition]{Examples}

\theoremstyle{remark}
\newtheorem{remarks}[proposition]{Remarks}
\newtheorem{remark}[proposition]{Remark}

\title{Free and co-free constructions for Hopf categories}

\author[P. Gro{\ss}kopf]{Paul Gro{\ss}kopf}
\address{Paul Gro{\ss}kopf, D\'epartement de Math\'ematiques, Universit\'e Libre de Bruxelles, Belgium}
\email{paul.grosskopf@gmx.at}

\author[J. Vercruysse]{Joost Vercruysse}
\address{Joost Vercruysse, D\'epartement de Math\'ematiques, Universit\'e Libre de Bruxelles, Belgium}
\email{joost.vercruysse@ulb.be}

\begin{abstract}
We show that under mild conditions on the monoidal base category $\Vv$, the category $\VHopf$ of Hopf $\Vv$-categories is locally presentable and deduce the existence of free and cofree Hopf categories. We also provide an explicit description of the free and cofree Hopf categories over a semi-Hopf category. One of the conditions on the base category $\Vv$, states that endofunctors obtained by tensoring with a fixed object preserve jointly monic families, which leads us to the notion of ``very flat monoidal product'', which we investigate in particular for module categories.
\end{abstract}

\maketitle

\tableofcontents

\section*{Introduction}

Free constructions are a fundamental tool for many algebraic structures. In particular, the free group over a set or a monoid is a classical construction that allows to describe all possible symmetries of a given object, in terms of generators and relations. Formally, free constructions provide left adjoints to forgetful functors. The free group over a set can be recovered combining the construction of a free monoid over a set, and the free group over a monoid. 
\[
\xymatrix{
{\sf Grp} \ar[rr]_\bot^\bot && {\sf Mon} \ar@<-1ex>[rr]^\bot \ar@<-2ex>[ll] \ar@<2ex>[ll] && \Set \ar@<-1ex>[ll]
}
\]
In fact, the forgetful functor from groups to monoids also has a right adjoint, which is obtained by taking all invertible elements in a given monoid. One can think of this construction as the ``cofree group over a monoid''. 

Moving from classical to non-commutative geometry, symmetries are no longer described in terms of groups and their actions, but rather by Hopf algebras (or quantum groups) and their (co)actions (see e.g.\ \cite{Manin}), which leads also to ``hidden symmetries'' of classical geometric objects. 
Replacing groups by Hopf algebras, means that we substitute the Cartesian category of sets, by the monoidal category of vector spaces.
A main consequence of this is that, in contrast to the Cartesian case of sets where every object has a unique coalgebra structure, coalgebra structures in the category of vector spaces can be more involved and the above picture needs to be completed with additional forgetful functors from bialgebras to algebras and from coalgebras to vector spaces. In contrast to the classical forgetful functors, these functors usually do no longer have a left adjoint, but rather a right adjoint, providing the cofree coalgebra over a vector space, and the cofree bialgebra over an algebra, see \cite[Section 6.4]{Sweedler}. As in the case of groups above, the forgetful functor from Hopf algebras to bialgebras has both a left and right adjoint. The existence of these adjoints was suggested in \cite{Sweedler}, although without providing a proof. The left adjoint, that is the free Hopf algebra or Hopf envelope of a bialgebra, was first constructed by Manin \cite{Manin} and generalizes the construction of the free Hopf algebra over a coalgebra given earlier by Takeuchi \cite{Takeuchi}, as the latter arises by combining Manin's construction with the free bialgebra over a coalgebra (which is just given by the tensor algebra over the underlying vector space of the coalgebra, see e.g.\ \cite{Sweedler}). For the right adjoint, a first proof of the existence of the cofree Hopf algebra over a bialgebra was given in \cite{Ago1}, based on earlier results from \cite{Porst:JPAA}. All these free and cofree constructions are summarized in the diagram below. 
\[
\xymatrix@!C{
&&& {\sf Alg} \ar@<1ex>[dl]^{B^c} \ar@<1ex>[dr] \ar@{}[dr]|-{\swvdash} \\
{\sf Hopf} \ar[rr]_\bot^\bot && {\sf Bialg} \ar@<1ex>[ur] \ar@<1ex>[dr] \ar@{}[ur]|-{\nevdash} \ar@{}[dr]|-{\swvdash} 
\ar@<-2ex>[ll] \ar@<2ex>[ll] && \Vect \ar@<1ex>[ul]^-{T}  \ar@<1ex>[dl]^-{T^c}\\
&&& {\sf Coalg} \ar@<1ex>[ul]^-{B} \ar@<1ex>[ur] \ar@{}[ur]|-{\nevdash}
}
\]
The square of this diagram commutes in the following sense. Of course, all combinations of forgetful functors from bialgebras to vector spaces coincide. Also, the composition of functors between categories of algebras and coalgebras coincide. This expresses, for example, the fact that the underlying algebra of the free bialgebra over a coalgebra coincides with the free algebra over the underlying vector space of this coalgebra. It might be useful to remark that this commutativity rule in combination with the adjunctions, leads to a natural transformation from the free bialgebra over the cofree coalgebra over a vector space $V$ to the cofree bialgebra over the free algebra over $V$ (we abusively denote all forgetful functors by $U$):
\begin{eqnarray*}
\Hom_{\Bialg}(B T^c V, B^c T V)&\cong& \Hom_{\Coalg}(T^c V, U B^c T V)\\
\cong \Hom_{\Coalg}(T^c V, T^c U T V) &\cong& \Hom_{\Vect}(UT^c V, UT V) \ni \eta_V \circ \epsilon_V,
\end{eqnarray*}
where $\eta$ and $\epsilon$ denote the unit and counit of respectively the adjunction $(T,U)$ and $(U,T^c)$.
 However, this is never an isomorphism, as one can see already in case $V$ is the zero space, since $B T^c 0 = B k = k[x]$ is the polynomial (bi)algebra and $B^c T 0 = B^c k$ is the (strictly bigger) bialgebra of linearly recursive sequences.

Closely related to free and cofree constructions, is the question of completeness and cocompleteness of the various categories involved. For the category of Hopf algebras, this was again already claimed in \cite{Sweedler}, but a full proof was given only relatively recent, based on the theory of locally presentable categories \cite{Porst}, or by means of more explicit constructions of limits and colimits \cite{Ago1, Ago2}. These works culminated in a systematic study of the category of Hopf algebras, not only over vector spaces but over general symmetric monoidal categories in \cite{Porst:formal1, Porst:formal2}.

In this paper, we will investigate a ``multi-object'' version of the above (see \eqref{diagramconclusion} at the end of this paper for a summarizing diagram similar to the one above), motivated by the fact that in many (geometric) situations, the natural objects that arise are groupoids rather than groups.
With the rise of non-commutative geometry, several algebraic structures have been introduced to serve as a non-commutative or linear counterpart for groupoids. For example, weak Hopf algebras \cite{BNS:WHA} and Hopf algebroids \cite{Bohm:Hoids} both can play this role.
More recently, Hopf categories \cite{BCV} have been introduced as an alternative approach. A semi-Hopf $\Vv$-category over a symmetric monoidal category $\Vv$, is a category enriched over the category of $\Vv$-coalgebras. If a semi-Hopf $\Vv$-category moreover admits a suitable notion of an antipode it is called a Hopf $\Vv$-category. When $\Vv$ is the category of sets, then we recover the usual notion of a groupoid. On the other hand, a Hopf $\Vv$-category with one object is exactly a Hopf algebra in $\Vv$.  In this sense, the theory of Hopf $\Vv$-categories naturally unifies the theory of Hopf algebras with the theory of groupoids. Moreover, if a Hopf category has only a finite set of objects, it was shown in \cite[Section 7]{BCV} that the coproduct of all Hom-objects is in a natural way a weak Hopf algebra, and hence a Hopf algebroid (over a commutative base algebra). However, Hopf categories with an arbitrary set, and even a class of objects, can also be considered. Such objects can no longer be described in terms of a weak Hopf algebra or a Hopf algebroid, which is exactly one of the main advantages of working with Hopf $\Vv$-categories rather than weak Hopf algebras or Hopf algebroids. On the other hand, the theory of Hopf categories is in a sense easier to handle and formally much closer to classical Hopf algebras than weak Hopf algebras or Hopf algebroids.
Hopf categories were also shown to fit in the more general theory of Hopf monads in monoidal bicategories \cite{Bohm:polyads} and alternatively can be described as oplax Hopf monoids in a braided monoidal bicategory \cite{BFVV2}. Galois and descent theory for Hopf categories has been developed in \cite{CaeFie} and a version of the celebrated Larson-Sweedler theorem has been proven in \cite{BFVV}. 

Applying the machinery of locally presentable categories and existing results from literature, we derive that the category of semi-Hopf $\Vv$-categories is locally presentable if $\Vv$ is and some mild conditions on $\Vv$ are fulfilled, see \cref{sHopflp}. 
The case of Hopf $\Vv$-categories shows to be more complicated, and in order to prove that the category of Hopf $\Vv$-categories is also locally presentable, we will make use of an explicit construction of limits and colimits in the category of semi-Hopf categories. Especially the case of limits needs particular attention, as these make use of limits in the category of coalgebras. In order to describe the latter explicitly, one needs certain exactness properties for the monoidal product. This leads us to the notion of a ``very flat'' monoidal product, which requires endofunctors obtained from tensoring with a fixed object to preserve arbitrary jointly monic families. We study such monoidal products from a module theoretic point of view in \seref{veryflat}, and we believe this might be of independent interest.

In \seref{cofree} we provide an explicit construction of the cofree coalgebra over an object in a locally presentable monoidal category with very flat monoidal product. Although the existence of such objects was already known (see e.g. \cite[2.7]{Porst}), an explicit construction only had been given in case of vector spaces over a field, which is vastly generalized here. Moreover, for a limit of coalgebras in a monoidal category $\Vv$ with very flat monoidal product, our construction leads to an explicit family of jointly monic morphisms with this limit as domain, in the underlying monoidal category $\Vv$ (see \cref{limitsincoalg}). These families allow us to make explicit computations for such limits.

Based on the above, we can then deduce (see \seref{main}) that the category of Hopf $\Vv$-categories is again locally presentable and deduce the existence of free and cofree Hopf $\Vv$-categories, generalizing the picture of the ``one-object-case'' above. It is important to remark that these free and cofree constructions leave the set (or class) of objects in the considered $\Vv$-category unchanged. We end the paper by providing the explicit description of the free and cofree Hopf categories over a given semi-Hopf category in \seref{construction}. One can observe that our construction naturally unifies the description of free Hopf algebras (see \cite{Manin}) with the one of free groupoids (see e.g.\ \cite{Brown}). 

A particularly motivating example of a semi-Hopf category is the category of algebras, which is known to be enriched over coalgebras by means of Sweedler's universal measuring coalgebras, see \cite{Sweedler}, \cite{Tambara}, \cite{Chris}, \cite{AGV}. The results of our paper allow to consider the free or cofree Hopf category over this semi-Hopf category, which then leads then to a Hopf category structure on the category of algebras. Following \cite{Manin}, one could consider the Hom-objects of this Hopf category as describing the natural non-commutative (iso)morphisms between the non-commutative spaces whose coordinate algebras are the objects of this category.

\section{Preliminaries on $\Vv$-graphs and $\Vv$-categories}

{
In this section we recall several notions and results that are scattered throughout literature and that will be useful later in this paper. We don't claim any originality for the results presented here, although some observations might be new or formulated differently from existing literature. We tried to provide many pointers to literature, without any claim of being complete.
}

\subsection{Locally presentable categories}

Recall (see e.g.\ \cite{AdaRos}) that a category $\Vv$ is called {\em locally presentable} if and only if $\Vv$ is cocomplete and there exists a (small) set of objects $\Ss$ and a regular cardinal $\lambda$ with the following two properties:
\begin{enumerate}[(i)]
\item the representable functor $\Hom(S,-):\Vv\to\Set$ preserves $\lambda$-filtered colimits for any $S\in \Ss$;  
\item every object in $\Vv$ is a $\lambda$-filtered colimit of elements in $S$.
\end{enumerate}
Locally presentable categories are known to satisfy a lot of interesting properties: they have a generator, are complete and cocomplete, well-powered and co-well-powered. In general they do not have a cogenerator. The dual of a locally presentable category $\Vv$ is not locally presentable, unless $\Vv$ is equivalent to a complete lattice.
Among the many examples of locally presentable categories, let us just mention especially $\Set$, the category of sets, and $\Mod_k$, the category of modules over a (commutative) ring $k$, which will be our main cases of interest in what follows. However, the theory has much wider applications since many more examples exist, such as the category of Banach spaces \cite{Ros} and the category of $C^*$-algebras, to mention but a few examples.

The reason why we consider locally presentable categories in this paper, is that they offer the following powerful adjoint functor theorem.

\begin{proposition}\label{adjointlp}
Let $F:\Vv\to \Ww$ be a functor between locally presentable categories. 
\begin{enumerate}
\item $F$ has a right adjoint if and only if it preserves colimits;
\item $F$ has a left adjoint if and only if it preserves limits and $\lambda$-filtered colimits for some regular cardinal $\lambda$.
\end{enumerate}
\end{proposition}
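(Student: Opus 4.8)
The statement to prove is the adjoint functor theorem for locally presentable categories, \cref{adjointlp}. This is a classical result, so my plan is to reduce it to standard theorems about locally presentable categories rather than reprove everything from scratch.

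\medskip

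\emph{Plan.} For part (1), the "only if" direction is trivial: any functor with a right adjoint preserves colimits. For the "if" direction, I would invoke the Special Adjoint Functor Theorem. Since $\Vv$ is locally presentable it is cocomplete, well-powered, and has a generator (in fact a strong generator, namely the set $\Ss$ of $\lambda$-presentable objects); dually $\Ww$, being locally presentable, is complete, well-powered, and co-well-powered. The cleanest route, though, is to use the fact that a locally presentable category is, by definition, accessible and cocomplete, and to apply the theorem (see e.g.\ \cite{AdaRos}, or Ad\'amek--Rosick\'y Theorem 1.66) that a colimit-preserving functor out of a locally presentable category into any category has a right adjoint provided it is also accessible; colimit-preservation forces accessibility here because $F$ restricted to the $\lambda$-presentable generators determines $F$ up to canonical colimit, and one checks $F$ preserves $\mu$-filtered colimits for $\mu$ large enough. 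Alternatively, and most efficiently: since $\Vv$ is locally presentable it satisfies the hypotheses of Freyd's Special Adjoint Functor Theorem on the domain side via its strong generator and well-poweredness, and one only needs $F$ cocontinuous together with the solution-set condition, which is supplied automatically by local presentability of $\Vv$. I would cite \cite{AdaRos} for the precise statement and note that this is standard.

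\medskip

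For part (2), the "only if" direction again needs almost nothing: a functor with a left adjoint preserves all limits, and in particular all colimits that exist, so certainly $\lambda$-filtered colimits for every $\lambda$. For the "if" direction, the key input is that $F$ preserving limits plus $\lambda$-filtered colimits for some regular $\lambda$ makes $F$ an accessible functor between accessible categories that moreover preserves limits; the Accessible Adjoint Functor Theorem (Ad\'amek--Rosick\'y, Theorem 1.66 / Corollary, or Makkai--Par\'e) then yields that $F$ has a left adjoint. Concretely, the solution-set condition at an object $W \in \Ww$ is verified by writing $W$ as a $\lambda$-filtered colimit of $\lambda$-presentable objects and using that $F$ preserves this colimit together with the fact that there is, up to isomorphism, only a set of $\lambda$-presentable objects in $\Vv$; the hom-functors out of these detect the needed factorizations. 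So the proof consists of checking the hypotheses of the relevant theorem in \cite{AdaRos} and quoting it.

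\medskip

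\emph{Main obstacle.} The genuine content is entirely imported: the real work is in the Special and Accessible Adjoint Functor Theorems, which I am treating as known (they are part of the standard theory of locally presentable and accessible categories, \cite{AdaRos}). The only thing requiring care in our write-up is the bookkeeping that ties the hypothesis "$F$ preserves $\lambda$-filtered colimits for \emph{some} $\lambda$" to the accessibility hypothesis of the cited theorem --- one must choose $\lambda$ large enough that it simultaneously bounds the presentability ranks of both $\Vv$ and $\Ww$ and is a rank at which $F$ preserves filtered colimits, which is possible since there are arbitrarily large such regular cardinals. Given that, both halves follow formally, so I would present the argument briefly and direct the reader to \cite{AdaRos} for the underlying machinery.
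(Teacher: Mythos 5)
Your proposal is correct and follows essentially the same route as the paper: part (1) via the (dual) Special Adjoint Functor Theorem using that a locally presentable category is cocomplete, co-well-powered and has a generating set, and part (2) by quoting \cite[Theorem 1.66]{AdaRos}. The only slip is a small one of bookkeeping: for the dual SAFT giving a right adjoint you need the \emph{domain} $\Vv$ to be \emph{co}-well-powered (not well-powered), which locally presentable categories indeed are, so nothing is lost.
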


\begin{proof}
\ul{(i)} follows from the Special Adjoint Functor Theorem in combination with the observations made above that a locally presentable category is cocomplete, co-well-powered and has a generator. \ul{(ii)} is proven in \cite[Theorem 1.66]{AdaRos}.
\end{proof}

We will also use the following results.

\begin{proposition}[{\cite[Corollary 2.48]{AdaRos}}]\label{sublp}
A full subcategory of a locally presentable category, which is closed under limits and colimits, is locally presentable.
\end{proposition}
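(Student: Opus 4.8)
The plan is as follows. Write $\Vv$ for the ambient locally presentable category and $U\colon\Aa\hookrightarrow\Vv$ for the full subcategory, which we assume closed under limits and colimits. The easy half is immediate: since $\Aa$ is closed under colimits in $\Vv$, every small diagram in $\Aa$ has its colimit computed in $\Vv$, so $\Aa$ is cocomplete and $U$ preserves all colimits; dually $\Aa$ is complete and $U$ is continuous. As cocompleteness is one of the two defining conditions for local presentability, what remains is to produce a regular cardinal $\lambda$ and a set $\Ss$ of $\lambda$-presentable objects of $\Aa$ such that every object of $\Aa$ is a $\lambda$-filtered colimit of objects from $\Ss$.

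The heart of the matter --- and the step I expect to be the main obstacle --- is to show that $U$ has a left adjoint $F$, i.e.\ that $\Aa$ is reflective in $\Vv$. I would establish this via the General Adjoint Functor Theorem: $U$ preserves limits and $\Aa$ is complete by the previous paragraph, so it is enough to construct, for each $K\in\Vv$, a solution set. Here the cardinal bookkeeping of accessible-category theory enters. Using that $\Aa$ is closed under $\mu$-filtered colimits for \emph{every} regular $\mu$ together with standard properties of the locally presentable category $\Vv$, one chooses $\mu$ large enough (this is the genuinely delicate point) that $K$ is $\mu$-presentable and that the $\mu$-presentable objects of $\Vv$ which happen to lie in $\Aa$ --- of which there are, up to isomorphism, only a set $\Pp$ --- already generate $\Aa$ under $\mu$-filtered colimits. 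Then $\bigcup_{P\in\Pp}\Hom_\Vv(K,U(P))$ is a solution set: given $g\colon K\to U(A)$ with $A\in\Aa$, present $A$ as a $\mu$-filtered colimit of objects of $\Pp$ (also a colimit in $\Vv$, by closure), and factor $g$ through the image under $U$ of one coprojection using $\mu$-presentability of $K$. Since $U$ is fully faithful, the counit of $F\dashv U$ is then a natural isomorphism. It is worth flagging that closure under limits alone does \emph{not} make a full subcategory of a locally presentable category reflective; it is precisely the closure under filtered colimits, together with this careful choice of $\mu$, that yields the solution-set condition.

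Granting reflectivity, the presentable generator falls out. Fix a regular cardinal $\lambda$ witnessing local presentability of $\Vv$ and a set $\Ss_0$ of $\lambda$-presentable objects of $\Vv$ with every object of $\Vv$ a $\lambda$-filtered colimit of objects from $\Ss_0$; put $\Ss=\{\,F(S)\mid S\in\Ss_0\,\}$. Each $F(S)$ is $\lambda$-presentable in $\Aa$, because $\Hom_\Aa(F(S),-)\cong\Hom_\Vv(S,U(-))$ and both $U$ and $\Hom_\Vv(S,-)$ preserve $\lambda$-filtered colimits. And any $A\in\Aa$ is a $\lambda$-filtered colimit of objects of $\Ss$: writing $U(A)$ as a $\lambda$-filtered colimit of objects $S_i\in\Ss_0$, applying the colimit-preserving functor $F$, and identifying $FU(A)\cong A$ gives $A\cong\colim F(S_i)$ over the same index category. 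Together with cocompleteness from the first paragraph, this shows $\Aa$ is locally presentable.
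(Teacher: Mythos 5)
The paper gives no proof of this statement --- it is quoted directly from \cite[Corollary 2.48]{AdaRos} --- so the comparison below is with the argument in that reference rather than with anything in the text. Your first and third paragraphs are correct: closure under (co)limits gives (co)completeness with $U$ preserving both; and once a reflector $F$ exists, the isomorphism $\Hom_{\Aa}(F(S),-)\cong\Hom_{\Vv}(S,U(-))$, the fact that $U$ preserves $\lambda$-filtered colimits (colimits in $\Aa$ being computed in $\Vv$), and $FU(A)\cong A$ do yield a set of $\lambda$-presentable objects generating $\Aa$ under $\lambda$-filtered colimits.

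The genuine gap is in the solution-set verification, at exactly the sentence you flag as delicate and then dispose of in one line. The claim that a single regular cardinal $\mu$ can be chosen so that \emph{every} object of $\Aa$ is a $\mu$-filtered colimit of $\mu$-presentable objects of $\Vv$ lying in $\Aa$ is precisely the assertion that $\Aa$ is an accessible, accessibly embedded subcategory of $\Vv$ --- which is the entire content of the theorem, not cardinal bookkeeping. Nothing you invoke produces it: the canonical $\mu$-filtered diagram of $\mu$-presentable objects over $A\in\Aa$ has vertices that need not lie in $\Aa$; and although limit-closure lets you replace each $\mu$-presentable $P\to A$ by the smallest subobject of $A$ in $\Aa$ through which it factors (an intersection, hence a limit of a diagram in $\Aa$), there is no bound, uniform over all $A\in\Aa$, on the presentability rank of these closures --- $\Aa$ is an abstract full subcategory, so no L\"owenheim--Skolem-style closure under operations is available to control their size. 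In \cite{AdaRos} the accessibility of $\Aa$ is obtained from genuinely nontrivial machinery on accessible categories (via $\lambda$-pure subobjects), after which local presentability follows from the characterization of locally presentable categories as the complete accessible ones, and reflectivity is a \emph{consequence} (via \cref{adjointlp}) rather than an ingredient. As written, your argument assumes the theorem at its key step; either import that accessibility result explicitly or the proof is incomplete.
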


\begin{proposition}[{\cite[Proposition 1.61]{AdaRos}}]\label{epi-mono}
Every locally presentable category has both (strong epi, mono)- and (epi, strong mono)-factorizations of morphisms.
\end{proposition}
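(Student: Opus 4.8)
The plan is to reduce the statement to two classical facts of general category theory, applied using only the properties of locally presentable categories already recalled above; in particular, as noted there, a locally presentable category $\Vv$ is complete, cocomplete, well-powered and co-well-powered.

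First I would establish the (strong epi, mono)-factorizations by the standard ``intersection of subobjects'' construction, which uses only completeness and well-poweredness. Given a morphism $f\colon A\to B$, well-poweredness guarantees that the subobjects of $B$ through which $f$ factors form a small set, and completeness then provides their intersection $m\colon I\hookrightarrow B$, realised as the wide pullback over $B$ of all these subobjects. Since every chosen factorization $A\to S_i$ composes to $f$, these maps assemble into a unique $e\colon A\to I$ with $f=m\circ e$. It remains to check that $e$ is a strong epimorphism: given a commutative square $p\circ u=v\circ e$ with $p$ a monomorphism, pull $p$ back along $v$ to obtain a monomorphism $\pi\colon P\hookrightarrow I$ and a map $\rho\colon P\to C$ together with an induced $\sigma\colon A\to P$ factoring $e$ through $\pi$; then $m\circ\pi$ exhibits $P$ as a subobject of $B$ through which $f$ factors, so minimality of $I$ yields $\tau\colon I\to P$ with $m\pi\tau=m$, hence $\pi\tau=\mathrm{id}_I$, so $\pi$ is an isomorphism and $\rho\circ\pi^{-1}$ is the required diagonal. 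Uniqueness of this diagonal is automatic since $p$ is monic, and (using that $\Vv$ has equalizers) the diagonal-fill-in property against monomorphisms already forces $e$ to be an epimorphism.

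For the (epi, strong mono)-factorizations I would simply dualize: the construction above used only completeness and well-poweredness of $\Vv$, and its dual inputs --- cocompleteness and co-well-poweredness --- likewise hold in a locally presentable category (even though $\Vv^{\mathrm{op}}$ itself need not be locally presentable). Explicitly, one factors $f\colon A\to B$ through the cointersection (wide pushout) of all quotient objects of $A$ through which $f$ factors, obtaining $f=m\circ e$ with $e$ an epimorphism, and the dual fill-in argument --- now exploiting co-minimality of the cointersection --- shows that $m$ is a strong monomorphism. The whole argument is formal; the only delicate point is the diagonal-fill-in step verifying that the factors are \emph{strong} rather than merely epi/mono, which is precisely where the (co)minimality of the (co)intersection enters. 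Of course, one may instead invoke \cite[Proposition~1.61]{AdaRos} directly.
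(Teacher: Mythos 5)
Your proposal is correct. The paper offers no proof of its own for this statement --- it simply cites \cite[Proposition 1.61]{AdaRos} --- and the argument you give (factoring through the intersection of all subobjects through which $f$ factors, verifying the diagonal fill-in via the minimality of that intersection, and dualizing with cointersections of quotients, using that local presentability supplies completeness, cocompleteness, well-poweredness and co-well-poweredness) is precisely the standard construction underlying the cited result.
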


\subsection{The category of $\Vv$-graphs} 

\begin{Definition}
Let $\Vv$ be an arbitrary category. A {\em $\Vv$-graph} is a pair $\ul A=(A^0,A^1)$ where $A^0$ is a set\footnote{In fact, the definition also makes sense when $A^0$ is a class, but to avoid set-theoretical issues we will mainly consider the case where $A^0$ is a set.} and $A^1$ is a family of objects $A^1=(A_{x,y})_{x,y\in A^0}$ in $\Vv$ indexed by $A^0\times A^0$. The elements of $A^0$ are called the `objects' of $\ul A$, the $\Vv$-objects $A_{x,y}$ are the called the `Hom-objects' of $\ul A$.
By sheer laziness we also write $A_{x,y}=A_{xy}$. If $\ul A=(A^0,A^1)$ is a $\Vv$-graph then we also say that ``$\ul A$ is a $\Vv$-graph over $A^0$''.\\
Let $\ul A$ and $\ul B$ be two $\Vv$-graphs. We define a {\em morphism of $\Vv$-graphs} $\ul f:\ul A\to \ul B$ as a pair $\ul f=(f^0,f^1)$ where $f^0:A^0\to B^0$ is a map and $f^1=\left (f_{x,y}: A_{x,y} \to B_{fx,fy} \right )_{x,y \in A^0}$ is a family of morphisms in $\Vv$. Remark that we write $f^0(x)=fx$ for $x\in A^0$ for sake of brevity.

The composition of two morphisms of $\Vv$-graphs $\ul f:\ul A\to \ul B$ and $\ul g: \ul B\to \ul C$ is given by 
$(g\circ f)^0=g^0\circ f^0$ and $(g\circ f)^1=\left(g_{fx,fy} \circ f_{x,y}: A_{x,y} \to C_{gfx,gfy} \right)_{x,y \in A^0}$
This defines the category of $\Vv$-graphs that we denote as $\VGrph$.
\end{Definition}

\begin{remarks}
\begin{enumerate}
\item One can also define the category $\VopGrph$ as the category $(\textsf{$\Vv^{op}$-Grph})^{op}$. Explicitly, this category has the same objects as $\VGrph$ but a morphism $\ul f:\ul A\to \ul B$ is a pair $\ul f=(f^0,f^1)$, where $f^0:B^0\to A^0$ is a map and $f^1$ is a family of morphisms $\left( f_{x,y}:A_{fx,fy}\to B_{x,y}\right)_{x,y\in B^0}$.
\item
Let us remark that the category $\VGrph$ can be viewed as a full subcategory of the category $\Fam(\Vv)$ of ``families'' over $\Vv$, which is the free completion of $\Vv$ under coproducts (see e.g.\ \cite[Section 4.2]{GV}). In fact, $\VGrph$ can also be interpreted as the free completion of $\Vv$ under ``double coproducts'', by which we mean coproducts indexed over sets of type $X\times X$, where $X$ is a set.
Similarly, the category $\VopGrph$ is a full subcategory of the category $\Maf(\Vv)$, which is the free completion under products. Recently, in \cite{AGM} the duality between and pre-rigidity of the categories $\Fam(\Vv)$ and $\Maf(\Vv)$ has been investigated. Probably several of the results developed there could be restricted to the categories $\VGrph$ and $\VopGrph$.
\end{enumerate}
\end{remarks}

\begin{definition}
If $\ul A$ is a graph, then its {\em opposite graph} is the graph $\ul A^{op}$ with the same set of objects $A^0$ and for any $x,y\in A^0$, we have $A^{op}_{x,y}=A_{y,x}$. 
\end{definition}

With the above definition, one observes the following.

\begin{proposition}\label{opgrph}
There is an involutive endofunctor $(-)^{op}:\VGrph\to\VGrph$, which sends a $\Vv$-graph to its opposite $\Vv$-graph.
\end{proposition}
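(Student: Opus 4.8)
The plan is to verify directly from \deref{} of the opposite graph that the assignment $\ul A \mapsto \ul A^{op}$ extends to a functor and that this functor is its own inverse, so in particular involutive (hence an equivalence). This is entirely routine bookkeeping; the only thing to be slightly careful about is how a morphism of $\Vv$-graphs is transported, since the Hom-objects get their indices swapped.

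First I would define the action on objects: given a $\Vv$-graph $\ul A = (A^0, A^1)$, set $(-)^{op}(\ul A) = \ul A^{op}$, which has the same object set $A^0$ and Hom-objects $A^{op}_{x,y} = A_{y,x}$. Next, on a morphism $\ul f = (f^0, f^1): \ul A \to \ul B$ I would define $\ul f^{op} = ((f^0)^{op}, (f^1)^{op}): \ul A^{op} \to \ul B^{op}$ by keeping the same map on objects, $(f^0)^{op} = f^0 : A^0 \to B^0$, and on Hom-objects by setting $(f^{op})_{x,y} = f_{y,x} : A^{op}_{x,y} = A_{y,x} \to B_{fy,fx} = B^{op}_{fx,fy}$, which type-checks precisely because both source and target indices are swapped simultaneously. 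Then I would check functoriality: the identity morphism $\ul{\id}_{\ul A} = (\Id_{A^0}, (\Id_{A_{x,y}})_{x,y})$ is sent to $(\Id_{A^0}, (\Id_{A_{y,x}})_{x,y})$, which is exactly $\ul{\id}_{\ul A^{op}}$; and for composable $\ul f : \ul A \to \ul B$, $\ul g : \ul B \to \ul C$ one has $((g\circ f)^{op})_{x,y} = (g\circ f)_{y,x} = g_{fy,fx} \circ f_{y,x} = (g^{op})_{fx,fy} \circ (f^{op})_{x,y} = (g^{op}\circ f^{op})_{x,y}$, while the object components compose trivially, so $(g\circ f)^{op} = g^{op}\circ f^{op}$. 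This establishes that $(-)^{op} : \VGrph \to \VGrph$ is a functor.

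Finally I would observe that applying $(-)^{op}$ twice returns the original data on the nose: $(A^{op})^{op}_{x,y} = A^{op}_{y,x} = A_{x,y}$, and likewise $((f^{op})^{op})_{x,y} = (f^{op})_{y,x} = f_{x,y}$, with the object maps unchanged throughout, so $(-)^{op}\circ(-)^{op} = \mathrm{Id}_{\VGrph}$. Hence $(-)^{op}$ is an involutive endofunctor of $\VGrph$, as claimed.

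There is really no substantive obstacle here; the statement is a definitional unwinding. The one place to pay attention is the index swap in the morphism component $(f^{op})_{x,y} = f_{y,x}$ — it is worth writing out the source and target objects explicitly (as above) to see that the definition is forced and consistent, since a reader might otherwise worry that the indexing on morphisms of $\Vv$-graphs clashes with the indexing convention $A^{op}_{x,y} = A_{y,x}$.
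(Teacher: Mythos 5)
Your proof is correct and matches the paper's treatment: the paper states this proposition as an immediate observation following the definition of the opposite graph, and your write-up simply makes explicit the routine verification (well-definedness of $(\ul f^{op})_{x,y}=f_{y,x}$, functoriality, and involutivity) that the paper leaves implicit. Nothing is missing.
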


The following observation about ``base change'', will be useful later on.

\begin{proposition}\label{Grphfunctor}
Any functor $F:\Vv\to\Ww$ induces a functor $\FGrph:\VGrph\to\WGrph$. Similarly, if $G:\Vv\to\Ww$ is another functor, then any natural transformation $\alpha:F\Rightarrow G$ induces a natural transformation $\aGrph:\FGrph\Rightarrow\GGrph$. In fact, we obtain a $2$-functor $\Grph:\Cat\to \Cat$. It follows that any adjunction between $\Vv$ and $\Ww$ also induces an adjunction between $\VGrph$ and $\WGrph$.
\end{proposition}

\begin{proof}
The $2$-functor  $\Grph$ is defined entirely component-wise. More explicitly, for any $\Vv$-graph $\ul A=(A^0,A^1)$, we define $\FGrph(\ul A)^0=A^0$ and $\FGrph(\ul A)^1=(F(A_{xy}))_{x,y\in A^0}$. Similarly, for any morphism of $\Vv$-graphs $\ul f=(f^0,f^1):\ul A\to \ul B$, we define $\FGrph(\ul f)^0=f^0$ and $\FGrph(\ul f)^1=(F(f_{xy}))_{x,y\in A^0}$. Finally, for any object $\ul A$, let us define $\aGrph_{\ul A}=(\alpha^0_A,\alpha^1_A)$ by $\alpha_A^0=id:A^0\to A^0$ and $(\alpha_A)_{xy}=\alpha_{A_{xy}}:F(A_{xy}) \to G(A_{xy})$.

We leave the verification that this construction provides indeed a $2$-functor to reader.
\end{proof}

{ In the next few propositions are build up with the aim to show that certain constructions in $\VGrph$ (used in further when dealing with limits and colimits in more involved categories) can be reduced to constructions in the underlying categories $\Set$ and $\Vv$.}

\begin{proposition}\label{Pff}
Consider the functor $P:\Vv\to\VGrph$, which sends an object $V$ in $\Vv$ to the $\Vv$-graph $(\{*\},V)$ over the (fixed) singleton set $\{*\}$. Then we have the following.
\begin{enumerate}[(i)]
\item $P$ is a full and faithful functor.
\item The functor $P$ has a left adjoint if and only if $\Vv$ has double coproducts (i.e. coproducts indexed by sets of the form $X\times X$ where $X$ is a set).
\end{enumerate}
\end{proposition}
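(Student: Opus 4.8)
The plan is to unwind the definition of morphisms in $\VGrph$ in both cases; everything reduces to elementary bookkeeping.

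\emph{Part (i).} First I would observe that a morphism $\ul f=(f^0,f^1):P(V)\to P(W)$ has $f^0$ equal to the unique map $\{*\}\to\{*\}$, so $\ul f$ is completely determined by its single component $f_{*,*}:V\to W$. Hence the assignment $g\mapsto(\mathrm{id}_{\{*\}},(g))$ is a bijection $\Hom_\Vv(V,W)\to\Hom_{\VGrph}(P(V),P(W))$, and comparing with the composition rule of $\VGrph$ shows at once that this bijection is compatible with composition and identities. Thus $P$ is full and faithful.

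\emph{Part (ii).} The key observation is that for an arbitrary $\Vv$-graph $\ul A=(A^0,A^1)$ and an object $V$ of $\Vv$, a morphism $\ul A\to P(V)$ consists of the unique map $A^0\to\{*\}$ together with a family $(f_{x,y}:A_{x,y}\to V)_{x,y\in A^0}$, so that
\[
\Hom_{\VGrph}(\ul A,P(V))\;\cong\;\prod_{(x,y)\in A^0\times A^0}\Hom_\Vv(A_{x,y},V),
\]
naturally in $\ul A$ and $V$. If $\Vv$ has double coproducts, I would define $L:\VGrph\to\Vv$ by $L(\ul A)=\coprod_{(x,y)\in A^0\times A^0}A_{x,y}$, and on a morphism $\ul f$ by the map induced, via the universal property of the coproduct, from the injections $A_{x,y}\to B_{f^0x,f^0y}\hookrightarrow L(\ul B)$; functoriality is then immediate. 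Combining the displayed isomorphism with the universal property $\prod_{x,y}\Hom_\Vv(A_{x,y},V)\cong\Hom_\Vv\bigl(\coprod_{x,y}A_{x,y},V\bigr)$ yields a natural isomorphism $\Hom_{\VGrph}(\ul A,P(V))\cong\Hom_\Vv(L(\ul A),V)$, exhibiting $L\dashv P$. Conversely, if $P$ admits a left adjoint $L$, then for any set $X$ and any family $(V_{x,y})_{(x,y)\in X\times X}$ in $\Vv$ I would apply the adjunction to the $\Vv$-graph $\ul A=(X,(V_{x,y})_{x,y})$: the displayed isomorphism gives $\Hom_\Vv(L(\ul A),V)\cong\prod_{(x,y)\in X\times X}\Hom_\Vv(V_{x,y},V)$ naturally in $V$, so $L(\ul A)$, equipped with the morphisms $V_{x,y}\to L(\ul A)$ corresponding to $\mathrm{id}_{L(\ul A)}$, is a coproduct of the family $(V_{x,y})_{(x,y)\in X\times X}$. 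Hence $\Vv$ has all double coproducts.

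There is no real obstacle here; the argument is essentially a careful unfolding of the definition of $\VGrph$. The only point requiring a little attention is the bookkeeping in the converse direction — namely that the index set of a ``double coproduct'' is exactly of the form $X\times X$, which is matched by taking $X$ to be the object set of the auxiliary $\Vv$-graph — together with the routine naturality verifications, which I would leave to the reader.
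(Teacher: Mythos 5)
Your proposal is correct and follows essentially the same route as the paper: part (i) by noting that a morphism $P(V)\to P(W)$ is forced to have identity object-map and hence is a single $\Vv$-morphism, and part (ii) by identifying $\Hom_{\VGrph}(\ul A,P(V))$ with a family of morphisms $A_{x,y}\to V$ so that a left adjoint exists exactly when the double coproduct $\coprod_{x,y}A_{x,y}$ does. You in fact spell out the converse direction (recovering the coproduct from the adjoint, with coprojections given by the unit) slightly more explicitly than the paper does, but the argument is the same.
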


\begin{proof}
\ul{(i)}.
Consider objects $V$ and $W$ in $\Vv$.
A morphism $\ul f:P(V)\to P(W)$ is then defined as a pair $(f^0,f^1)$, where $f^0:\{*\}\to \{*\}$ has to be the identity map, and $f^1$ consists of one single morphism $f:V\to W$. This observation shows that $P$ is fully faithful.

\ul{(ii)}. 
This follows from $\VGrph$ being the free completion of $\Vv$ under double coproducts. Explicitly, if we denote the left adjoint of $P$ by $L$, then for any object $\ul A$, the object $L{\ul A}\in\Vv$ must satisfy the property
$$\VGrph(\ul A,P(V))\cong \Vv(L{\ul A},V)$$
Moreover, for any $\ul f=(f^0,f^1)\in \VGrph(\ul A,P(V))$, we have that $f^0:A^0\to \{*\}$ is the unique map sending every element of $A^0$ to $*$. Henceforth, $f^1$ is just a collection of $\Vv$-morphisms $f_{x,y}:A_{x,y}\to V$. The natural bijection above tells that for any such collection, there exists a unique morphism $L({\ul A})\to V$, which expresses exactly the universal property of $L(\ul A)$ as the coproduct of the family of objects $A_{x,y}$ in $\Vv$. Remark that the canonical maps $A_{x,y}\to \coprod_{x,y} A_{x,y}=L(\ul A)$ correspond exactly to the unit of the adjunction.
\end{proof}

\begin{proposition}[See also \cite{KelLac}]\label{Upreserves}
Consider the forgetful functor $U:\VGrph\to\Set$.
\begin{enumerate}[(i)]
\item $\Vv$ has an initial object if and only if $U$ has a fully faithful left adjoint.
\item $\Vv$ has a terminal object if and only if $U$ has a fully faithful right adjoint.
\end{enumerate}
In particular, if $\Vv$ has a terminal and initial object, then $U$ preserves all limits and colimits.
\end{proposition}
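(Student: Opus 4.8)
The plan is to construct the two adjoints by hand, as the ``discrete'' and ``chaotic'' $\Vv$-graph functors, and then to invoke the standard fact that an adjoint functor is fully faithful precisely when its unit (for a left adjoint) or counit (for a right adjoint) is a natural isomorphism. Part (ii) is dual to part (i) throughout, and the final assertion follows formally from (i) and (ii).

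First I would treat the directions that produce the adjoints. Suppose $\Vv$ has an initial object $0$, and define $L:\Set\to\VGrph$ by sending a set $X$ to the $\Vv$-graph $(X,(0)_{x,y\in X})$ with all Hom-objects equal to $0$, and a map $g:X\to Y$ to the morphism with object-component $g$ and all Hom-components the unique morphism $0\to 0$. For any $\Vv$-graph $\ul B=(B^0,B^1)$, a morphism $L(X)\to\ul B$ is a pair $(f^0,f^1)$ with $f^0:X\to B^0$ an arbitrary map and $f^1$ a family of morphisms $0\to B_{f^0x,f^0y}$, each forced because $0$ is initial; hence $\ul f$ is completely determined by $f^0$, giving a natural bijection $\VGrph(L(X),\ul B)\cong\Set(X,U\ul B)$, i.e.\ $L\dashv U$. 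Evaluating this bijection at $\ul B=L(X)$ shows that the unit $\eta_X:X\to UL(X)=X$ is the identity, so $L$ is fully faithful. For (ii), if $\Vv$ has a terminal object $1$, put $R(X)=(X,(1)_{x,y\in X})$; a morphism $\ul A\to R(X)$ is determined by its object-component because $1$ is terminal, which gives $U\dashv R$, and the counit $UR(X)=X\to X$ is the identity, so $R$ is fully faithful.

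For the converse directions I would test the adjunctions against the objects $P(V)=(\{*\},V)$, $V\in\Vv$, coming from the fully faithful functor $P$ of Proposition~\ref{Pff}. Assume $U$ has a fully faithful left adjoint $L$; then the unit of $L\dashv U$ is invertible, so $UL(\{*\})$ is a one-element set and $L(\{*\})\cong P(I)$ for a well-defined object $I\in\Vv$. Using full faithfulness of $P$ and the adjunction $L\dashv U$, for every $V\in\Vv$ we obtain bijections
\[
\Vv(I,V)\;\cong\;\VGrph\big(P(I),P(V)\big)\;\cong\;\VGrph\big(L(\{*\}),P(V)\big)\;\cong\;\Set\big(\{*\},UP(V)\big)=\Set(\{*\},\{*\}),
\]
a singleton; thus $\Vv(I,V)$ has exactly one element for every $V$, i.e.\ $I$ is initial in $\Vv$. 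Dually, if $U$ has a fully faithful right adjoint $R$, its counit is invertible, $R(\{*\})\cong P(T)$ for some $T\in\Vv$, and $\Vv(V,T)\cong\VGrph(P(V),R(\{*\}))\cong\Set(UP(V),\{*\})$ is a singleton for all $V$, so $T$ is terminal.

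For the final assertion, if $\Vv$ has both an initial and a terminal object then (i) and (ii) equip $U$ with a left adjoint $L$ and a right adjoint $R$; being the left adjoint of $R$, $U$ preserves all colimits, and being the right adjoint of $L$, it preserves all limits. I do not expect a genuine obstacle here. The only step requiring a little care is the converse direction, where one must observe that a fully faithful adjoint has invertible unit (resp.\ counit), so that $L(\{*\})$ (resp.\ $R(\{*\})$) can be identified with an object of $\Vv$ sitting inside $\VGrph$ via $P$; everything else is routine unwinding of the definition of $\VGrph$.
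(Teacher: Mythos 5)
Your proposal is correct and follows essentially the same route as the paper: both construct the adjoints explicitly with the initial (resp.\ terminal) object in every Hom-component, and both prove the converse by testing the adjunction against the one-object graphs $P(V)$ and using the full faithfulness of $P$ from \cref{Pff}. The only (harmless) difference is that you spell out why $L(\{*\})$ has a singleton underlying set via the invertibility of the unit, where the paper states this directly.
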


\begin{proof}
In case $\Vv$ has an initial object $\bot$, then the forgetful functor $U$ has a left adjoint $L$ which assigns to any set $X$ the graph over $X$ which has the initial object in every component, i.e. $L(X)=(X^0,X^1)$, where $X^0=X$ and $X_{x,y}=\bot$ for all $x,y\in X$. By construction, we have that $X=UL(X)$, which is exactly the unit of the adjunction, so $L$ is fully faithful.

Conversely, suppose that $U$ has a fully faithful left adjoint $L$, then we know that the underlying set of $L(X)$ is (up to bijection) just $X$, for any set $X$. Denote $L(\{*\})=(\{*\},\bot)$.
Consider any object $V$ in $\Vv$ and associate to it the $\Vv$-graph $P(V)=(\{*\},V)$ over the singleton set $\{*\}$. 
Then we find that (here we use that $P$ is fully faithful, see \cref{Pff})
$$\Vv(\bot,V)\cong \VGrph((\{*\},\bot),(\{*\},V)) = \VGrph(L(\{*\}),(\{*\},V)) = \Set(\{*\},\{*\})=\{id_{\{*\}}\}.$$
This shows that $\bot$ is indeed an initial object in $\Vv$.

Similarly, if $\Vv$ has a terminal object $\top$, the forgetful functor $U:\VGrph\to\Set$ has a fully faithful right adjoint $R$ which assigns to any set $X$ the graph over $X$ which has the terminal object in every component. If a fully faithful right adjoint $R$ for $U$ exists, then we can write $R(\{*\})=(\{*\},\top)$ and we find
$$\Vv(V,\top)\cong \VGrph((\{*\},V),(\{*\},\top)) = \VGrph((\{*\},V),R(\{*\})) = \Set(\{*\},\{*\})=\{id_{\{*\}}\},$$
which shows that $\top$ is a terminal object for $\Vv$.
\end{proof}

{
The following characterization of monomorphisms in $\VGrph$ can be found in \cite[Lemma 4.1]{KelLac}. We provide a proof for sake of completeness and also give a similar characterization for epimorphisms.}

\begin{proposition}\label{epimonoVGrph}
Let $\Vv$ be a category with a terminal and initial object. 
A morphism $\ul f=(f^0,f^1):\ul A\to \ul B$ in $\VGrph$ is a
\begin{enumerate}[(i)]
\item monomorphism if and only if $f^0:A^0\to B^0$ is injective and  for all $x,y\in A^0$, $f_{x,y}:A_{x,y}\to B_{fx,fy}$ is a monomorphism in $\Vv$.
\item epimorphism if and only if $f^0:A^0\to B^0$ is surjective and for all $x,y\in B^0$, the family $f_{x',y'}:A_{x',y'}\to B_{x,y}$, indexed by all $(x',y')\in A^0\times A^0$ for which $(fx',fy')=(x,y)$, is jointly epic in $\Vv$.
\end{enumerate}

\begin{proof}
It follows from \cref{Upreserves} that if $\ul f$ is a monomorphism, then $f^0$ is a monomorphism in $\Set$, hence injective. Fix any $x,y\in A^0$ and any pair of morphisms $g,h:V\to A_{x,y}$ in $\Vv$ such that $f_{x,y}\circ g=f_{x,y}\circ h$. Then consider the $\Vv$-graph $\ul V$, with set of objects $V^0=A^0$, $V_{x,y}=V$ and $V_{x',y'}=A_{x',y'}$ for all $(x',y')\in A^0\times A^0\setminus\{(x,y)\}$. Then we have morphisms $\ul g,\ul h:\ul V\to \ul A$, where $g^0$ and $h^0$ are the identity maps, $g_{x,y}=g$, $h_{x,y}=h$ and all other components of $\ul g$ and $\ul h$ are identities. Then clearly, $\ul f\circ \ul g=\ul f\circ \ul h$ and therefore $\ul g=\ul h$ as $\ul f$ is a monomorphism. In particular, $g=g_{x,y}=h_{x,y}=h$. We can conclude that $f_{x,y}$ is a monomorphism in $\Vv$.  Conversely, if $\ul f$ satisfies the conditions of the statement, and $\ul f\circ \ul g=\ul f\circ \ul h$ for some parallel morphisms $\ul g$ and $\ul h$, then since $f^0$ is injective, we already have $g^0=h^0$. Since moreover each component of $\ul f$ is a monomorphism in $\Vv$, also all components of $\ul g$ and $\ul h$ coincide.

Dually, it follows again from \cref{Upreserves} that if $\ul f$ is an epiomorphism, then $f^0$ is an epimorphism in $\Set$, hence surjective. Now fix $x,y\in B^0$, and any pair of morphisms $g,h:B_{x,y}\to V$ in $\Vv$ such that $g\circ f_{x',y'}=h\circ f_{x',y'}$ for all $(x',y')\in A^0\times A^0$ for which $(fx',fy')=(x,y)$. Then consider the $\Vv$-graph $\ul V$, with set of objects $V^0=B^0$, $V_{x,y}=V$ and $V_{x'',y''}=B_{x'',y''}$ for all $(x'',y'')\in B^0\times B^0\setminus\{(x,y)\}$. Then we have morphisms $\ul g,\ul h:\ul B\to \ul V$, where $g^0$ and $h^0$ are identity maps, $g_{x,y}=g$, $h_{x,y}=h$ and all other components of $\ul g$ and $\ul h$ are identities. Then clearly, $\ul g\circ \ul f=\ul h\circ \ul f$ and therefore $\ul g=\ul h$ as $\ul f$ is an epimorphism. In particular, $g=g_{x,y}=h_{x,y}=h$. We can conclude that the family $f_{x',y'}$ is a jointly epic in $\Vv$. The converse is again a direct consequence of the definitions.
\end{proof}

\end{proposition}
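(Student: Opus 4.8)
The plan is to prove the two ``if and only if'' statements by a test-graph technique, reducing questions about monomorphisms and epimorphisms in $\VGrph$ to the analogous questions in $\Set$ (for the object maps) and in $\Vv$ (for the Hom-components).

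For item (i), ``only if'': since $\Vv$ has a terminal and initial object, \cref{Upreserves} says $U:\VGrph\to\Set$ preserves all limits and colimits, so in particular it sends monomorphisms to monomorphisms; hence $f^0=U\ul f$ is injective. To see that each $f_{x,y}$ is a monomorphism in $\Vv$, I would fix $x,y\in A^0$ and a parallel pair $g,h:V\to A_{x,y}$ with $f_{x,y}\circ g=f_{x,y}\circ h$, and then build the auxiliary $\Vv$-graph $\ul V$ with $V^0=A^0$, carrying $V$ in the $(x,y)$-slot and $A_{x',y'}$ in every other slot, together with the two $\Vv$-graph morphisms $\ul g,\ul h:\ul V\to\ul A$ that are the identity on objects and in every slot other than $(x,y)$ and carry $g$, resp.\ $h$, in the $(x,y)$-slot. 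One checks $\ul f\circ\ul g=\ul f\circ\ul h$, whence $\ul g=\ul h$ and so $g=h$. For ``if'': given $\ul f\circ\ul g=\ul f\circ\ul h$, injectivity of $f^0$ forces $g^0=h^0$, and then the $(x,y)$-component of the identity reads $f_{g^0x,g^0y}\circ g_{x,y}=f_{g^0x,g^0y}\circ h_{x,y}$, so $g_{x,y}=h_{x,y}$ because $f_{g^0x,g^0y}$ is a monomorphism.

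Item (ii) is dual. For ``only if'', the same \cref{Upreserves} gives that $U$ preserves epimorphisms, so $f^0$ is surjective; and for fixed $x,y\in B^0$ I would take a parallel pair $g,h:B_{x,y}\to V$ with $g\circ f_{x',y'}=h\circ f_{x',y'}$ for every $(x',y')\in A^0\times A^0$ with $(fx',fy')=(x,y)$, form the test graph $\ul V$ with $V^0=B^0$ carrying $V$ in the $(x,y)$-slot and $B_{x'',y''}$ elsewhere, and the morphisms $\ul g,\ul h:\ul B\to\ul V$ that are the identity on objects and away from the $(x,y)$-slot and carry $g$, resp.\ $h$, there; then $\ul g\circ\ul f=\ul h\circ\ul f$ forces $\ul g=\ul h$, hence $g=h$, so the stated family is jointly epic. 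For ``if'': from $\ul g\circ\ul f=\ul h\circ\ul f$, surjectivity of $f^0$ yields $g^0=h^0$, and for each $(x,y)\in B^0\times B^0$ the $(x',y')$-components of the identity (over all $(x',y')$ with $(fx',fy')=(x,y)$) say exactly that $g_{x,y}$ and $h_{x,y}$ agree after precomposition with the jointly epic family $\{f_{x',y'}\}$, so $g_{x,y}=h_{x,y}$.

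I expect no serious obstacle. The only points needing care are the index bookkeeping in the two ``if'' directions (the source and target objects of $g_{x,y}$ and $h_{x,y}$ involve $g^0$ and $h^0$, which must first be identified), and checking that the auxiliary graphs $\ul V$ together with the maps $\ul g,\ul h$ really are well-defined morphisms in $\VGrph$. It is worth noting that the terminal/initial object hypothesis enters only through \cref{Upreserves} to control $f^0$; the componentwise parts of the argument do not use it.
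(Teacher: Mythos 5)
Your proposal is correct and follows essentially the same route as the paper's proof: apply \cref{Upreserves} to control $f^0$, then use the one-slot test graph $\ul V$ with identity components elsewhere to isolate each Hom-component, and handle the converse directions componentwise. Your extra remarks on the index bookkeeping in the ``if'' directions and on where the terminal/initial hypothesis is actually used are accurate refinements of details the paper leaves implicit.
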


The category of $\Vv$-graphs inherits many nice properties from $\Vv$. Many results of this flavour have been proven throughout literature, but we just mention the following result, which is sufficient for our needs.

\begin{proposition}[{\cite[Proposition 4.4]{KelLac}}]
Let $\Vv$ be an arbitrary category. 
If $\Vv$ is locally presentable, then so is $\VGrph$.
\end{proposition}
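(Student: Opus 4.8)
The plan is to verify the two defining conditions of local presentability for $\VGrph$ directly, after first recording how colimits are computed there. I would check that $\VGrph$ is cocomplete by an explicit construction: for a diagram $D\colon\mathcal I\to\VGrph$, the object set of $\colim D$ is $\colim_{\mathcal I}D(-)^0$ computed in $\Set$, while the Hom-object of $\colim D$ at a pair $(x,y)$ is the colimit in $\Vv$ of the Hom-objects $D(i)_{\bar x,\bar y}$ over the (essentially small) category of pairs of lifts $\bar x,\bar y$ of $x$ and $y$ along the colimit cocone. Cocompleteness of $\Set$ and of $\Vv$ (the latter since $\Vv$ is locally presentable) makes this well defined, and its universal property is checked componentwise. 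In particular a $\lambda$-filtered colimit in $\VGrph$ has object set the $\lambda$-filtered colimit in $\Set$ and Hom-object at $(x,y)$ the $\lambda$-filtered colimit in $\Vv$ over lifts; moreover \cref{Upreserves} already records that $U\colon\VGrph\to\Set$ preserves all colimits.

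Next, fix a regular cardinal $\lambda$ for which $\Vv$ is locally $\lambda$-presentable, together with a set $\Ss$ of $\lambda$-presentable objects of $\Vv$ such that every object of $\Vv$ is a $\lambda$-filtered colimit of objects of $\Ss$ (recall $\Vv$ has an initial object $\bot$, being cocomplete). Let $\Gg$ be a set of representatives for the isomorphism classes of $\Vv$-graphs $\ul G$ with $|G^0|<\lambda$ and all Hom-objects $\lambda$-presentable in $\Vv$; since $\lambda$ is regular and $|F|^2<\lambda$ whenever $|F|<\lambda$, such graphs are closed under $\lambda$-small colimits in $\VGrph$. I would then establish: (a) every $\ul G\in\Gg$ is $\lambda$-presentable in $\VGrph$, using the formula $\VGrph(\ul G,\ul A)\cong\coprod_{f^0\colon G^0\to A^0}\prod_{x,y\in G^0}\Vv(G_{xy},A_{fx,fy})$ together with the facts that a map out of the $<\lambda$-element set $G^0$ into a $\lambda$-filtered colimit factors essentially uniquely through a stage, each $G_{xy}$ is $\lambda$-presentable in $\Vv$, and a $\lambda$-small product commutes with $\lambda$-filtered colimits over a fixed $\lambda$-filtered diagram; and (b) $\Gg$ is a strong generator of $\VGrph$, which reduces to the corresponding property of $\Ss$ in $\Vv$ (the two-object graphs carrying a single object of $\Ss$ in one off-diagonal slot and $\bot$ elsewhere detect all morphisms $S\to A_{x,y}$, and the one-object graph $(\{*\},\bot)$ detects objects). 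A cocomplete category with a strong generator of $\lambda$-presentable objects is locally presentable by \cite{AdaRos}, so $\VGrph$ is locally presentable (indeed locally $\lambda$-presentable).

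The hard part is organizational rather than conceptual: one must make the explicit description of colimits in $\VGrph$ precise — especially the ``colimit over lifts'' formula for the Hom-objects of a filtered colimit — and then run the presentability bookkeeping through two nested $\lambda$-filtered layers, namely the decomposition $A^0=\bigcup\{F\subseteq A^0:|F|<\lambda\}$ of the object set and $\lambda$-filtered presentations of the (fewer than $\lambda$ many) Hom-objects of each restriction $\ul A|_F$. An alternative that bypasses this is to observe that $\VGrph$ is the Grothendieck construction of the pseudofunctor $\Set\to\CAT$, $X\mapsto\Vv^{X\times X}$, whose transition functor along $f\colon X\to Y$ is left Kan extension along $f\times f$ (this exists since $\Vv$ is cocomplete, and is cocontinuous, being a left adjoint to restriction); as $\Set$ is locally presentable, each fibre $\Vv^{X\times X}$ is locally presentable, and the transition functors are accessible, so the general theorem on $\CAT$-valued pseudofunctors over a locally presentable base shows $\VGrph$ is locally presentable. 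The first route is the hands-on one, the second the structural explanation.
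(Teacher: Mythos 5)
The paper does not actually prove this proposition: it is quoted directly from Kelly--Lack, so there is no internal argument to measure you against. Your first, hands-on route is essentially the proof of the cited source: cocompleteness of $\VGrph$ via the explicit colimit formula (object set computed in $\Set$, Hom-objects as colimits over categories of lifts, i.e.\ pointwise left Kan extension along the colimit cocone), then a strong generator of $\lambda$-presentable graphs, and finally \cite[Theorem 1.20]{AdaRos}. The outline is correct, and you correctly identify where the real work sits. Two points deserve to be made explicit if this were written out. First, in step (a), to commute the $\lambda$-small product $\prod_{x,y}\Vv(G_{xy},A_{fx,fy})$ past the $\lambda$-filtered colimit you must first factor $f^0$ through a stage $i_0$ and then check that the functor from $i_0/\mathcal{I}$ to the category of lifts of $(fx,fy)$, sending $j$ to the image of the chosen lift, is cofinal (this uses that each category of lifts is connected and $\lambda$-filtered); only then are all the Hom-factors colimits over one and the same $\lambda$-filtered index, so that the product/colimit interchange applies. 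Second, in step (b), joint extremal epimorphy of the family out of the two-object graphs reduces to the corresponding property of $\Ss$ in $\Vv$ precisely because monomorphisms, hence subobjects, in $\VGrph$ are detected componentwise, which is the content of \cref{epimonoVGrph}; that reduction should be cited rather than left implicit. Your second, structural route (the Grothendieck construction of $X\mapsto\Vv^{X\times X}$ with cocontinuous transition functors over the locally presentable base $\Set$) is also sound and arguably the cleaner explanation, but it rests on a nontrivial accessibility theorem for total categories of opfibrations that you would need to cite precisely, including the uniformity of the accessibility rank of the transition functors.
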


\begin{remark}\label{limVGrph}
Recall from \cite[Proposition 2]{BCSW} (see also \cite[Section 3]{KelLac}, \cite[Proposition 4.14]{Vasi}) that colimits in $\VGrph$ can be build up explicitly from colimits in $\Set$ and $\Vv$, although not in a straightforward way.  As remarked in \cite[Proposition 4.14]{Vasi}, the situation of limits in $\VGrph$ is much easier, and since we will need those further on in the paper, let us describe these explicitly here.
Consider a small category $\Zz$ and a functor $F:\Zz\to \VGrph$. For any $Z\in\Zz$, we denote $FZ=(FZ^0,FZ^1)$. Take the limit $(L^0,\lambda^0_Z:L^0\to FZ^0)$ in $\Set$ of the composite functor $U\circ F:\Zz\to \Set$. We know from \cref{Upreserves} that $L^0$ will be the set of objects of $\lim F$ or more precisely that $U \lim F=(L^0,\lambda^0)$. Now fix any $x,y\in L^0$, and consider the functor $F_{x,y}:\Zz\to \Vv$ defined on an object $Z\in \Zz$ as $F_{x,y}(Z)=FZ_{\lambda^0_Z(x),\lambda^0_Z(y)}$ and on a morphism $f:Z\to Z'$ in $\Zz$ as $Ff_{\lambda^0_Z(x),\lambda^0_Z(y)}$. Then consider the limit $\lim F_{x,y}=(L^1_{x,y},\lambda^1_{x,y})$ in $\Vv$. From the universal properties of the considered limits, one easily deduces that $\lim F=((L^0,L^1),(\lambda^0,\lambda^1))$. 
\end{remark}

\subsection{The category of $\Vv$-categories}

From now on, let us suppose that $\Vv$ is a monoidal category, whose tensor product we denote by $\ot$ and monoidal unit by $I$. By Mac Lane's coherence theorem, without loss of generality, we can omit writing associativity and unitality constraints. Let us recall the (well-known) notion of a $\Vv$-enriched category, see e.g.\ \cite{Kelly}.

\begin{definition}
A {\em $\Vv$-enriched category} (or a {\em $\Vv$-category} for short) is a $\Vv$-graph $\ul A=(A^0,A^1)$ endowed with $\Vv$-morphisms (called the composition\footnote{As one can see from the definition of composition, one should interpret $A_{yx}$ as the object of morphisms ``from $y$ to $x$''. Traditionally, many authors use the reversed notation, however we believe the notation used here is more efficient for our needs.} or multiplication morphisms)
\begin{eqnarray*}
m_{xyz}: A_{xy}\ot A_{yz} \to A_{xz} \qquad j_x: I \to A_{xx},
\end{eqnarray*}
for all $x,y,z\in A^0$ satisfying the following axioms.
\[
\xymatrix{A_{xy} \ar[drr]^{\sf id}\ar[r]^-{\simeq} \ar[d]_{\simeq} & A_{xy} \ot I \ar[r]^-{{\sf id }\ot j_y} & A_{xy}\ot A_{yy} \ar[d]^{m_{xyy}} & A_{xy}\ot A_{yz} \ot A_{zw} \ar[r]^-{m_{xyz}\ot {\sf id}} \ar[d]_-{{\sf id} \ot m_{yzw}} & A_{xz}\ot A_{zw} \ar[d]^{m_{xzw}}\\
I \ot A_{xy} \ar[r]_-{j_x\ot {\sf id}} & A_{xx}\ot A_{xy} \ar[r]_-{m_{xxy}} & A_{xy} & A_{xy}\ot A_{yw} \ar[r]_-{m_{xyw}} & A_{xw}}.
\]
A {\em $\Vv$-functor} is a $\Vv$-graph morphism that preserves multiplication and unit. Explicitely $\ul f: (A^0,A^1,m,j)\to (B^0,B^1,m',j')$ consists of a function $f^0:A^0 \to B^0$ and a family of morphisms $f_{xy}:A_{xy}\to B_{fx,fy}$ in $\Vv$ for all $x,y\in A^0$, such that the following diagrams commute for all $x,y,z\in A^0$.
\[
\xymatrix{I \ar[r]^-{j_x} \ar[dr]_-{j'_{fx}} & A_{xx} \ar[d]^{f_{xx}} & A_{xy}\ot A_{yz} \ar[rr]^-{m_{xyz}} \ar[d]_{f_{xy}\ot f_{yz}} &  & A_{xz} \ar[d]^{f_{xz}}\\
& B_{fxfx}  & B_{fxfy}\ot B_{fyfz} \ar[rr]_-{m'_{fxfyfz}} & & B_{fxfz} }.
\]
One observes that $\Vv$-categories and $\Vv$-functors form a the category, which we denote by $\VCat$.
\end{definition}

\begin{remark}
One can of course also consider $\Vv$-natural transformations which turn $\VCat$ into a $2$-category. The same holds for the categories semi-Hopf categories and Hopf-categories which we consider below. 
\end{remark}

{
Recall that an algebra (or monoid) in a monoidal category $\Vv$, is an object $A$ endowed with a multiplication $m:A\ot A\to A$ and a unit $j:I\to A$ satisfying the usual associativity and unitality conditions. A morphism of between algebra $(A,m,j)$ and $(B,m',j')$ is a morphism $f:A\to B$ satisfying $m'\circ (f\ot f) = f\circ m$ and $j'=f\circ j$. We denote the category algebras and their algebra morphisms by $\Alg(\Vv)$. A coalgebra in $\Vv$, also called a $\Vv$-coalgebra, is an algebra in the opposite category $\Vv^{op}$, and we denote by $\Coalg(\Vv)=\Alg(\Vv^{op})^{op}$ the category of coalgebras in $\Vv$.}
Clearly, a $\Vv$-category with one object contains exactly the same data as an algebra in $\Vv$, and a $\Vv$-opcategory with one object contains the same data as a coalgebra in $\Vv$.

Suppose now that $\Vv$ has an initial object $\bot$ such that for any object $V$ in $\Vv$, we have $V\ot \bot\cong \bot\cong \bot\ot V$. We will refer to the property as saying that the initial object is preserved under tensoring. This is for example satisfied if the endofunctors of the form $-\ot V$ and $V\ot -$ preserve (finite) colimits for all $V$ in $\Vv$. Then to any object $V$ in $\Vv$ we can associate a $\Vv$-category $\ul V$ in the following way. As set of objects we take a two-element set $V^0=\{0,1\}$ and we define the Hom-objects $V_{x,y}$ as follows: $V_{0,0}=V_{11}=I$, $V_{0,1}=V$ and $V_{1,0}=\bot$. Then the identity on the monoidal unit $I$ induces the unit morphisms $j_0=j_1={\sf id}_{I}$. Composition is then obtained in an obvious way from the unit constraints of the monoidal structure on $\Vv$ and the universal property of the initial object. These observations lead directly to the following (known) result.

\begin{proposition}
\begin{enumerate}[(i)]
\item
The functor $P:\Vv\to \VGrph$ from \cref{Pff} lifts to a fully faithful functor $\PP:\Alg(\Vv)\to \VCat$ such that the following diagram commutes, where $\Alg(\Vv)$ denotes the category of algebras in $\Vv$, and the unadorned vertical arrows are the obvious forgetful functors.
\[
\xymatrix{
\Alg(\Vv) \ar[rr]^-\PP \ar[d] && \VCat \ar[d]\\
\Vv\ar[rr]^-P && \VGrph
}
\]
\item
If $\Vv$ has an initial object that is preserved under tensoring, then there is a faithful functor $\PP':\Vv\to \VCat$, sending $V$ to the two-object $\Vv$-category $\ul V$ defined above.
\end{enumerate}
\end{proposition}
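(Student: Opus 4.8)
The plan is to treat the two statements in turn, reducing everything to the observation recorded just before the statement that a one-object $\Vv$-category carries exactly the data of an algebra in $\Vv$, together with the universal property of the initial object.

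For part (i), I would first define $\PP$ on objects: given an algebra $(A,m,j)$ in $\Vv$, set $\PP(A,m,j)=(\{*\},A,m,j)$, i.e.\ the one-object $\Vv$-graph $P(A)$ equipped with composition morphism $m_{***}=m:A\ot A\to A$ and unit morphism $j_*=j:I\to A$. For a single object, the associativity and unitality diagrams defining a $\Vv$-category collapse to precisely the associativity and unitality axioms of an algebra, so $\PP(A,m,j)$ is a genuine $\Vv$-category. On morphisms, an algebra morphism $f:A\to B$ is sent to $P(f)=(\mathrm{id}_{\{*\}},f)$; the two commuting squares defining a $\Vv$-functor reduce to the two identities $m'\circ(f\ot f)=f\circ m$ and $f\circ j=j'$, which is exactly what it means for $f$ to be an algebra morphism. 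Functoriality of $\PP$ is inherited from that of $P$, and the square commutes by construction, since forgetting the composition and unit of $\PP(A,m,j)$ returns $P(A)=P\bigl(U(A,m,j)\bigr)$, and similarly on morphisms. For full faithfulness, by \cref{Pff}(i) the functor $P$ is already fully faithful, so every $\Vv$-graph morphism $P(A)\to P(B)$ is $P(f)$ for a unique $f:A\to B$ in $\Vv$; and such a $P(f)$ is a $\Vv$-functor precisely when $f$ is an algebra morphism, which says exactly that $\PP$ is fully faithful.

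For part (ii), the real work is to equip the $\Vv$-graph $\ul V$ --- with $V^0=\{0,1\}$, $V_{00}=V_{11}=I$, $V_{01}=V$, $V_{10}=\bot$ and $j_0=j_1=\mathrm{id}_I$ --- with composition morphisms $m_{xyz}:V_{xy}\ot V_{yz}\to V_{xz}$ and to verify the axioms. I would define these by a short dichotomy. One checks that for indices $(x,y,z)$ none of $V_{xy},V_{yz},V_{xz}$ equals $\bot$ exactly when $(x,y,z)\in\{(0,0,0),(0,0,1),(0,1,1),(1,1,1)\}$; in these cases all factors are copies of $I$ and $V$ and $m_{xyz}$ is the appropriate left or right unit constraint of $\Vv$ (with $m_{000}=m_{111}$ the unitor $I\ot I\to I$). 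In the remaining four cases at least one of $V_{xy},V_{yz}$ is $\bot$, so the source $V_{xy}\ot V_{yz}$ is isomorphic to $\bot$ because the initial object is preserved under tensoring; we then take $m_{xyz}$ to be that isomorphism followed by the unique morphism from $\bot$ to the target $V_{xz}$. Note the dichotomy is clean: if neither source is $\bot$ then $V_{xz}\neq\bot$ as well, so the two clauses are exhaustive and non-conflicting.

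Verifying the associativity and unit axioms for $\ul V$ then splits along the same dichotomy. If no Hom-object occurring in a given instance of an axiom equals $\bot$ (equivalently, no factor becomes $\bot$ after tensoring), the diagram is an instance of Mac Lane coherence for the unit constraints and commutes automatically. Otherwise, the two composites to be compared have a common target $T$ that is either $\bot$ or receives a morphism from a source isomorphic to $\bot$: in the first situation $\Hom_\Vv(\bot,\bot)$ is a singleton, and in the second both composites are morphisms whose domain is isomorphic to $\bot$, hence equal by initiality. So the axiom holds in every case. Once $\ul V$ is a $\Vv$-category, I would define, for $f:V\to W$ in $\Vv$, the $\Vv$-graph morphism $\PP'(f):\ul V\to\ul W$ with object part $\mathrm{id}_{\{0,1\}}$ and components $f_{00}=f_{11}=\mathrm{id}_I$, $f_{01}=f$, $f_{10}=\mathrm{id}_\bot$; compatibility with the composition and unit morphisms is checked by the same two-case argument (naturality of the unitors when no $\bot$ appears, uniqueness of maps out of $\bot$ otherwise). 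Functoriality of $\PP'$ is immediate, and $\PP'$ is faithful because $f$ is recovered as the $(0,1)$-component of $\PP'(f)$ (it is not full in general, as a $\Vv$-functor $\ul V\to\ul W$ need not respect the object labelling). I do not expect any genuine obstacle here: the only difficulty is organizing the finitely many cases, and each of them is dispatched either by monoidal coherence or by the initiality of $\bot$ combined with its preservation under tensoring.
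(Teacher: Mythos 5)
Your proposal is correct and follows exactly the route the paper intends: the paper states this result without proof, relying on the observations immediately preceding it (a one-object $\Vv$-category is precisely an algebra, and the two-object $\Vv$-category $\ul V$ is assembled from unit constraints and the initiality of $\bot$ preserved under tensoring), and your argument simply fills in those details. The case dichotomy for the compositions of $\ul V$ and the coherence/initiality verification of the axioms are all sound.
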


Let us start by studying the forgetful functor to $\Set$, similar to \cref{Upreserves}.
\begin{proposition}\label{Uvcattosetpreserves}
Consider the forgetful functor $U:\VCat\to\Set$.
\begin{enumerate}[(i)]
\item If $\Vv$ has an initial object that is preserved under tensoring, then $U$ has a fully faithful left adjoint. 
\item If $\Vv$ has a terminal object, then $U$ has a fully faithful right adjoint.
\end{enumerate}
In particular, if $\Vv$ has an initial and terminal object, then $U$ preserves all limits and colimits, in particular monomorphisms and epimorphisms.
\end{proposition}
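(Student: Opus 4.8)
The plan is to transcribe the proof of \cref{Upreserves} to the enriched setting, the only additional work being to equip the auxiliary $\Vv$-graphs appearing there with a canonical $\Vv$-category structure. For (i), I would define the candidate left adjoint $L\colon\Set\to\VCat$ by letting $L(X)$ have object set $X$, Hom-objects $L(X)_{x,x}=I$ and $L(X)_{x,y}=\bot$ for $x\neq y$, unit morphisms $j_x={\sf id}_I$, and composition morphisms $m_{xyz}$ equal to the canonical isomorphism $I\ot I\to I$ when $x=y=z$ and to the unique morphism out of the initial object otherwise; this makes sense precisely because $\bot$ is preserved under tensoring, so $L(X)_{xy}\ot L(X)_{yz}\cong\bot$ whenever $x\neq y$ or $y\neq z$. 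Checking that $L(X)$ is a $\Vv$-category then splits into two cases: on the ``fully diagonal'' components the unitality and associativity axioms reduce to the coherence of the unit constraints of $\Vv$, while on all other components both composites appearing in each axiom are morphisms out of an initial object and hence automatically agree. Next, for any $\Vv$-category $\ul B$, a $\Vv$-functor $\ul f\colon L(X)\to\ul B$ is completely determined by an arbitrary map $f^0\colon X\to B^0$ (the unit axiom for $\Vv$-functors forces $f_{xx}=j_{fx}$, and for $x\neq y$ the component $f_{xy}\colon\bot\to B_{fx,fy}$ is the unique such morphism), and conversely every $f^0$ extends, necessarily uniquely, to a $\Vv$-functor. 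This yields a bijection $\VCat(L(X),\ul B)\cong\Set(X,U\ul B)$ natural in both variables, so $L\dashv U$; since $UL(X)=X$ with identity unit, $L$ is fully faithful.

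For (ii), dually, I would define $R\colon\Set\to\VCat$ by letting $R(X)$ have object set $X$, all Hom-objects equal to the terminal object $\top$, and all structure morphisms $j_x\colon I\to\top$ and $m_{xyz}\colon\top\ot\top\to\top$ the unique ones; every $\Vv$-category and every $\Vv$-functor axiom holds automatically because any two parallel morphisms with target $\top$ coincide (note this half uses only the existence of a terminal object, not one preserved under tensoring). A $\Vv$-functor $\ul f\colon\ul A\to R(X)$ is then the datum of an arbitrary map $f^0\colon A^0\to X$ together with forced components $f_{xy}\colon A_{xy}\to\top$, giving a natural bijection $\VCat(\ul A,R(X))\cong\Set(U\ul A,X)$, so $U\dashv R$ and $R$ is fully faithful since $UR(X)=X$ with identity counit. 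For the final assertion, if $\Vv$ has a terminal object and an initial object preserved under tensoring, then by (i) and (ii) the functor $U$ is simultaneously a right adjoint and a left adjoint; hence it preserves all limits and all colimits, in particular pullbacks and pushouts, and therefore monomorphisms and epimorphisms.

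I expect the one genuinely non-formal point to be the verification that $L(X)$ is a $\Vv$-category: one must observe that off the diagonal every object occurring in the unitality and associativity diagrams is initial -- this is exactly where the hypothesis ``$\bot$ preserved under tensoring'' enters, possibly iterated across triple tensor products -- so that those diagrams commute trivially, while on the diagonal they collapse to Mac Lane coherence for the unitors together with naturality of the unit constraints. Everything else is a routine adaptation of \cref{Upreserves}.
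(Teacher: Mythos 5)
Your construction is exactly the one the paper uses: $L(X)$ with $I$ on the diagonal and $\bot$ elsewhere, and $R(X)$ with $\top$ everywhere; the paper simply declares these become $\Vv$-categories ``in a trivial way'', whereas you spell out the diagonal/off-diagonal case split and the adjunction bijections. The argument is correct and takes essentially the same approach as the paper.
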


\begin{proof}
\ul{(i)}
Consider any set $X$. We define a $\Vv$-category $L$ with $L^0=X$ as set of objects by putting $L_{xx}=I$, the monoidal unit of $\Vv$, for all $x\in X$ and $L_{xy}=\bot$, the initial object of $\Vv$, for all $x,y\in X$ with $x\neq y$. Then $L$ becomes in a trivial way a $\Vv$-category and this construction gives a fully faithful left adjoint to $U$.

\ul{(ii)}.
If $\Vv$ has a terminal object $\top$, then define another $\Vv$-category $R$ with $R^0=X$ as set of objects by putting $R_{xy}=\top$, the terminal object, for all $x,y\in X$. Again $R$ becomes in a trivial way a $\Vv$-category and this construction provides a fully faithful right adjoint for $U$. 
\end{proof}

{
As for $\VGrph$, also the category $\VCat$ inherits many good properties from $\Vv$. Let us remark that \cite{BCSW} considers the more general setting of categories enriched in a bicategory, but we formulate the result directly in the $\Vv$-enriched setting we need here. Another relevant reference is \cite{Wolff}.
}

\begin{proposition}\label{Vcatlp}
Let $\Vv$ be a monoidal category.
\begin{enumerate}[(i)]
\item \cite[Proposition 3, Theorem 6, Theorem 7]{BCSW} Suppose that $\Vv$ is cocomplete and for any object $A$ in $\Vv$, endofunctors on $\Vv$ of the form $A\ot-$ or $-\ot A$ preserve colimits. Then $\VCat$ is cocomplete as well and the forgetful functor $U:\VCat\to\VGrph$ has a left adjoint and is moreover monadic.
\item \cite[Theorem 4.5]{KelLac} If $\Vv$ is closed monoidal and $\lambda$-locally presentable, then $\VCat$ is $\lambda$-locally presentable as well.
\end{enumerate}
\end{proposition}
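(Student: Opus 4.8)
The plan is to derive both statements from one structural fact: $\VCat$ is monadic over $\VGrph$ for the ``free $\Vv$-category'' monad $T$, and $T$ is as well-behaved as $\Vv$ allows. Observe first that the hypotheses of (ii) imply those of (i): if $\Vv$ is closed monoidal then every endofunctor $A\ot-$ and $-\ot A$ is a left adjoint, hence preserves all colimits, and a $\lambda$-locally presentable category is in particular cocomplete. So I will construct $T$ assuming only the hypotheses of (i).

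\emph{The free $\Vv$-category monad.} Given a $\Vv$-graph $\ul A=(A^0,A^1)$, I would form $T\ul A$ with the same object set $A^0$ and Hom-objects the ``object of directed paths''
\[
(T\ul A)_{xy}\;=\;\coprod_{n\ge 0}\ \ \coprod_{x=x_0,x_1,\dots,x_n=y}\ A_{x_0x_1}\ot A_{x_1x_2}\ot\cdots\ot A_{x_{n-1}x_n},
\]
where the $n=0$ summand is $I$ and occurs only when $x=y$. The unit $j_x$ is the coprojection of that copy of $I$, and the composition $m_{xyz}$ is concatenation of paths; this is well defined because $-\ot W$ and $V\ot-$ preserve coproducts, giving canonical isomorphisms $\big(\coprod_i V_i\big)\ot W\cong\coprod_i(V_i\ot W)$ and $V\ot\big(\coprod_j W_j\big)\cong\coprod_j(V\ot W_j)$, and associativity and unitality of $m$ then reduce to reassociating tensor factors inside a coproduct. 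Together with the evident unit $\ul A\to T\ul A$ (inclusion of length-one paths) and multiplication $T^2\Rightarrow T$ (concatenating paths of paths), $T$ becomes a monad on $\VGrph$, and unwinding the definitions yields an isomorphism $\VCat\cong\VGrph^T$ compatible with the forgetful functors; in particular $U:\VCat\to\VGrph$ gains a left adjoint $F$.

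\emph{Monadicity and (co)completeness.} For monadicity I would apply Beck's theorem. The functor $U$ reflects isomorphisms: a $\Vv$-functor that is bijective on objects and invertible on each Hom-object has an inverse $\Vv$-graph morphism, which automatically preserves composition and units. And $\VCat$ has, and $U$ preserves, coequalizers of $U$-split pairs: these are absolute, hence preserved by $-\ot-$ in each variable, so the $\Vv$-category structure transports along the split coequalizer computed in $\VGrph$. Monadicity over the complete and cocomplete category $\VGrph$ makes $\VCat$ complete immediately. Cocompleteness follows once one knows that $T$ preserves reflexive coequalizers --- then $\VGrph^T$ has reflexive coequalizers created by $U$, and, since free objects have all coproducts, this produces every colimit. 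The preservation statement is where the (non-componentwise) description of colimits in $\VGrph$ --- built from colimits in $\Set$ and $\Vv$ but, unlike limits, cf.\ \cref{limVGrph}, not simply componentwise --- must be used; this is the input imported from \cite{BCSW}.

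\emph{Local presentability.} Now assume $\Vv$ is closed and $\lambda$-locally presentable. Then $\VGrph$ is $\lambda$-locally presentable by \cite[Proposition 4.4]{KelLac}, and by closedness every $A\ot-$ and $-\ot A$ preserves $\lambda$-filtered colimits. Since $(T\ul A)_{xy}$ is built from \emph{finite} tensor products of the Hom-objects of $\ul A$ by coproducts, and colimits commute with colimits, $T$ preserves $\lambda$-filtered colimits, i.e.\ is $\lambda$-accessible. By the theorem that the Eilenberg--Moore category of a $\lambda$-accessible monad on a $\lambda$-locally presentable category is again $\lambda$-locally presentable \cite{AdaRos} (essentially the content of \cite[Theorem 4.5]{KelLac} in this setting), $\VCat\cong\VGrph^T$ is locally presentable. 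The routine parts of all this are the verification of the $\Vv$-category axioms for $T\ul A$ and of the monad laws; the genuine obstacle is the analysis of colimits in $\VGrph$, needed to show that $T$ preserves reflexive coequalizers (for cocompleteness) and $\lambda$-filtered colimits (for local presentability), which is exactly why the detailed arguments of \cite{BCSW} and \cite{KelLac} are invoked rather than reproduced here.
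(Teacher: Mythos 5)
The paper offers no proof of this proposition---it is stated purely as a citation of \cite{BCSW} and \cite{KelLac}---and your outline is a correct reconstruction of the standard argument underlying those references: the free $\Vv$-category monad (matching the explicit description the paper later recalls in Remark~\ref{rem:freeVcat}), Beck monadicity, Linton's theorem for cocompleteness, and local presentability of Eilenberg--Moore categories of $\lambda$-accessible monads on $\lambda$-locally presentable bases. You also correctly isolate, and defer to the same cited sources, exactly the two technical points that carry the real weight (preservation by $T$ of reflexive coequalizers and of $\lambda$-filtered colimits, which hinge on the non-componentwise description of colimits in $\VGrph$), so your proposal is essentially the same approach, just with the scaffolding made explicit.
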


\begin{remarks}\label{rem:freeVcat}
\begin{enumerate}
\item
At first look, it might appear that the condition on the preservation of colimits by endofunctors $A\ot-$ and $-\ot A$ in item (i) of the above Proposition is weaker than the closed monoidality in item (ii). However, in view of \cref{adjointlp} both conditions are equivalent if $\Vv$ is locally presentable.
\item
The left adjoint $T:\VGrph\to\VCat$ to the forgetful functor $U$, creates the ``free $\Vv$-category'' over a $\Vv$-graph, see for example in \cite{Wolff}. As it will be useful for what follows, let us recall this construction. In case we start from a one-object $\Vv$-graph, that is, just an object $V\in \Vv$, then $T(V)$ is a one-object $\Vv$-category, that is, just a monoid in $\Vv$, namely the free monoid over the object $V$. In general the functor $T$ commutes with the forgetful functor to $\Set$. Explicitly, for a given $\Vv$-graph $\ul A=(A^0,A^1)$, the free $\Vv$ category $T(\ul A)$ has the same set of objects $A^0$, and for any $x,y\in A^0$, we have 
\begin{equation}\eqlabel{TAexplicit}
T(\ul A)_{x,y}=\coprod_{n\in \NN, z_1,\ldots, z_n \in A^0} A_{x,z_1}\ot A_{z_1,z_2}\ot \cdots \ot A_{z_n,y}.
\end{equation}
In case $x=y$, $T(\ul A)_{x,x}$ has an additional component of the form $I$ (the monoidal unit), that induces the unit morphisms $j_x:I\to T(\ul A)_{x,x}$. Composition in (or multiplication of) the $\Vv$-category $T(\ul A)$ is obtained from the monoidal associativity constraints $$
\xymatrix{\left(A_{x,z_1}\ot \cdots \ot A_{z_n,y}\right)\ot \left(A_{y,u_1}\ot \cdots \ot A_{u_n,z}\right)\ar[r]^-\cong & A_{x,z_1}\ot \cdots \ot A_{z_n,y}\ot A_{y,u_1}\ot \cdots \ot A_{u_n,z}}$$
and the universal property of the coproduct (recall that coproducts are preserved under tensor product by assumption). 

For any $\Vv$-graph $\ul A$, the unit of the adjunction $\eta^A:\ul A\to UT(\ul A)$ is given by the canonical morphisms
$$\eta^A_{x,y}:A_{x,y}\to T(\ul A)_{x,y}$$
for all $x,y\in A^0$, which are induced by the fact that $T(\ul A)_{x,y}$ is defined as a coproduct with $A_{x,y}$ as one of the components (see \equref{TAexplicit}). On the other hand, given a $\Vv$-category $\ul A$, the counit of the adjunction $\epsilon^A: TU(\ul A)\to \ul A$ is induced by the composition in (or multiplication of) $\ul A$.

Remark that by construction of $T(\ul A)$ as a coproduct and the composition as defined above, we have that for any $x,y\in A^0$, the family of morphisms
$$\xymatrix{A_{x,z_1}\ot A_{z_1,z_2}\ot \cdots \ot A_{z_n,y} \ar[rr]^-{\eta_{x,z_1}\ot \cdots \ot\eta_{z_n,y}} && 
T(\ul A)_{x,z_1}\ot T(\ul A)_{z_1,z_2}\ot \cdots \ot T(\ul A)_{z_n,y} \ar[r]^-{m} & T(\ul A)_{x,y}
}$$
indexed by all $n\in \NN, z_1,\ldots,z_n\in A^0$ are jointly epic in $\Vv$, where $\eta:\ul A\to UT(\ul A)$ denotes the unit of the adjunction $(T,U)$ and $m$ denotes the obvious morphism induced by the composition in $T(\ul A)$. 
In case $x=y$, one also needs to add the unit morphisms $j_x: I\to T(\ul A)_{x,x}$ to this family.
\item
{ 
Let us now describe colimits in $\VCat$. Consider a functor $F:\Zz\to\VCat$ and its colimit $\colim F$ in $\VCat$ together with the canonical morphisms $\gamma_Z:FZ\to \colim F$ for all $Z\in \Zz$. Also consider the composite functor $UF:\Zz\to\VGrph$ and its colimit $\colim UF$ in $\VGrph$ with the canonical morphisms $\gamma'_Z:UFZ\to \colim UF$. By the universal property of $\colim UF$, we obtain a unique morphism $q':\colim UF\to U\colim F$ such that $q'\circ \gamma'_Z=U\gamma_Z$ for all $Z$ in $\Zz$. By the adjunction $(T,U)$, this induces a morphism of $\Vv$-categories $q:T\colim UF\to \colim F$, such that
\begin{eqnarray*}
U\gamma_Z &=& q' \circ \gamma'_Z \\
&=& Uq\circ \eta^{\colim UF} \circ \gamma'_Z\\
&=& Uq\circ UT\gamma'_Z \circ \eta_{UFZ}.
\end{eqnarray*}
In fact, $Uq$ is even a (regular) epimorphism in $\Vv$-graph. Indeed, one can follow the construction of \cite[Proposition 2.11]{Wolff} to see that (the underlying $\Vv$-graph of) $\colim F$ can be constructed as the coequalizer (of the underlying $\VGrph$-morphisms) of parallel morphisms 
$$\xymatrix{ TUT\colim UF \ar@<.5ex>[rr] \ar@<-.5ex>[rr] && T\colim UF}$$ 
induced by the morphisms $\epsilon^{TUFZ}, TU\epsilon^{FZ}: TUTUFZ\to TUFZ$ for all $Z$. 
This explains that $\colim F$ can be viewed as the ``largest quotient'' $$q:T \colim UF\cong \colim TUF\to \colim F$$ in $\VGrph$ that is endowed with a $V$-category structure such that the $\VGrph$ morphisms $U\gamma_Z$ as defined by the above equation are the underlying morphisms of $\VCat$-morphisms for any $Z\in \Zz$. Since we know moreover that $Uq$ is an epimorphism in $\Vv$-graph, the family of all components of $q$ landing in the same codomain are jointly epic by \cref{epimonoVGrph}. In the diagram below, this family is pre-composed with the joinly epic family of morphisms landing in components of the free $\Vv$-category $T\colim UF$, as discussed in point (2), and the jointly epic family $\gamma'$ consisting of the components of the colimit $\colim UF$.
\[
\xymatrix{
\bigotimes_{i=1}^n (UFZ_i)_{x_{i1},x_{i2}} \ar[d]_-{\bigotimes_{i=1}^n (\gamma'_{Z_i})_{x_{i1},x_{i2}}} \ar@{=}[rr]
&& \bigotimes_{i=1}^n (UFZ_i)_{x_{i1},x_{i2}} \ar[d]_-{\bigotimes_{i=1}^n\eta^{UFZ_i}_{x_{i1},x_{i2}}} \ar@/^10pc/[ddd]_-{\bigotimes_{i=1}^n (\gamma_{Z_i})_{x_{i1},x_{i2}}}\\
\bigotimes_{i=1}^n (\colim UF)_{x'_{i-1},x'_i} \ar[d]_-{\bigotimes_{i=1}^n\eta^{\colim UF}_{x'_{i-1},x_i}} 
&& \bigotimes_{i=1}^n (UTUFZ_i)_{x_{i1},x_{i2}} \ar[d]_-{\bigotimes_{i=1}^n (UT\gamma'_{Z_i})_{x_{i1},x_{i2}}}\\
\bigotimes_{i=1}^n UT(\colim UF)_{x'_{i-1},x'_i} \ar[d]_-{m_{x'_0\cdots x'_n}}  \ar@{=}[rr]
&& \bigotimes_{i=1}^n UT(\colim UF)_{x'_{i-1},x'_i} \ar[d]_-{\bigotimes_{i=1}^n Uq_{x'_{i-1},x'_i}} \\
UT\colim UF_{x'_0,x'_n} \ar[d]_-{Uq} 
&& \bigotimes_{i=1}^n U\colim F_{x_{i-1},x_{i}} \ar[d]_-{m_{x_0\cdots x_n}}  \\
U\colim F_{x_0,x_n} \ar@{=}[rr] && U\colim F_{x_0,x_n}
}
\]
By the commutativity of the above diagram, we can therefore conclude that for any functor $F:\Zz\to \VCat$, and any $x,y\in (\colim F)^0$ the family of $V$-morphisms
$$
\xymatrix{
(UFZ_1)_{x_{11},x_{12}}\ot (UFZ_2)_{x_{21},x_{22}} \cdots \ot (UFZ_n)_{x_{n1},x_{n2}} \ar[d]^-{(\gamma_{Z_1})_{x_{11}x_{12}}\ot (\gamma_{Z_1})_{x_{21}x_{22}}\ot\cdots \ot (\gamma_{Z_1})_{x_{n1}x_{n2}}} \\
(\colim F)_{xx_1} \ot (\colim F)_{x_1x_2} \ot \cdots \ot (\colim F)_{x_{n-1}y}  \ar[d]^-{m_{xx_1x_2\cdots x_{n-1}y}} \\ 
(\colim F)_{xy}
}
$$
where we vary over all $n\in \NN_0$, $Z_1,\ldots,Z_n\in \Zz$, $x_{i1},x_{i2}\in (UFZ_1)^0$ such that $\gamma^0_{Z_i}(x_{i2})=\gamma^0_{Z_{i+1}}(x_{i+1,1})=x_i$, $\gamma^0_{Z_1}(x_{11})=x$ and $\gamma^0_{Z_n}(x_{n2})=y$ is jointly epic (in $\Vv$). 
}
\end{enumerate}
\end{remarks}

\begin{definition}
Let $\Vv$ be a monoidal category, then we denote by $\Vv^{rev}$ the monoidal category which has the same underlying category as $\Vv$, but whose tensor product is reversed. More precisely, we have that for any two objects $A,B$ in $\Vv$, 
$$A\ot^{rev} B := B\ot A.$$
and similarly for morphisms.

Let $\ul A$ be a $\Vv$-category.  Recall (see e.g. \cite{JanKel}) that the opposite $\Vv$-category of $\ul A$ is defined as the $\Vv^{rev}$-category $\ul A^{op}$ whose underlying $\Vv$-graph is the opposite graph of $\Aa$ (that is, the set of objects is $A^0$ and for each $x,y\in A^0$, we have $A^{op}_{xy}=A_{yx}$) endowed with compositions given by 
$$\xymatrix{A^{op}_{x,y}\ot^{rev} A^{op}_{y,z} = A_{z,y}\ot A_{y,x} \ar[rr]^-{m_{zyx}} && A_{z,x} = A^{op}_{x,z}
}$$
and the same unit morphisms as $\ul A$. 

Similarly, if $\ul f:\ul A\to \ul B$ is a $\Vv$-functor, then we define $\ul f^{op}:\ul A^{op}\to \ul B^{op}$ by $(f^{op})^0=f^0$ and $f^{op}_{x,y}=f_{y,x}$.
\end{definition}

The following result is immediate

\begin{proposition}\label{oppositecat}
There is an endofunctor $(-)^{op}:\VCat\to {\textsf{$\Vv^{rev}${\sf-Cat}}}$ which sends a $\Vv$-category to its opposite $\Vv^{rev}$-category. This functor is clearly isomorphism of categories, whose inverse functor is constructed in the same way, since $(\Vv^{rev})^{rev}=\Vv$ as a monoidal category.
\end{proposition}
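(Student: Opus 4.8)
The plan is to verify directly that the two assignments described immediately before the statement are well-defined and functorial, and then to exhibit an explicit strict two-sided inverse; there is no genuine obstacle, only bookkeeping.

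First I would check that for a $\Vv$-category $\ul A=(A^0,A^1,m,j)$ the data $\ul A^{op}$ really is a $\Vv^{rev}$-category. The unit morphisms are left unchanged, and the composition $m^{op}_{xyz}:A^{op}_{xy}\ot^{rev}A^{op}_{yz}=A_{zy}\ot A_{yx}\to A_{zx}=A^{op}_{xz}$ is taken to be $m_{zyx}$. After unwinding the definition of $\ot^{rev}$, the associativity and unitality diagrams required for $\ul A^{op}$ over $\Vv^{rev}$ are precisely the associativity and unitality diagrams for $\ul A$ over $\Vv$, with the object indices reversed and the composites read in the opposite order; hence they commute because the corresponding diagrams for $\ul A$ do. The only point needing a moment's care is that the associativity constraint of $\Vv^{rev}$ is that of $\Vv$ with its three arguments permuted, so the coherence cells genuinely match — and since by Mac Lane coherence these constraints are suppressed throughout the paper, this is painless.

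Next I would check that $\ul f^{op}=(f^0,(f_{yx})_{x,y\in A^0})$ is a $\Vv^{rev}$-functor $\ul A^{op}\to\ul B^{op}$ whenever $\ul f:\ul A\to\ul B$ is a $\Vv$-functor: the unit-compatibility triangle for $\ul f^{op}$ is literally that of $\ul f$ (the same morphisms), and the multiplicativity square for $\ul f^{op}$ at a triple $(x,y,z)$ is the multiplicativity square for $\ul f$ at $(z,y,x)$. Functoriality of $(-)^{op}$ is then immediate, since $(\ul g\circ\ul f)^{op}$ and $\ul g^{op}\circ\ul f^{op}$ agree on objects (both are $g^0\circ f^0$) and on Hom-components (both are $g_{fy,fx}\circ f_{y,x}$ at $(x,y)$), and identities are visibly preserved.

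Finally, for the isomorphism claim, I would apply the very same construction with $\Vv$ replaced by $\Vv^{rev}$, which is legitimate because $(\Vv^{rev})^{rev}=\Vv$ as a monoidal category, obtaining a functor in the opposite direction. Composing the two, one sees that $(\ul A^{op})^{op}$ has the same object set $A^0$, Hom-objects $(A^{op})^{op}_{xy}=A^{op}_{yx}=A_{xy}$, and — tracing the definition of the composition twice — exactly the original composition and unit morphisms of $\ul A$; likewise $(\ul f^{op})^{op}=\ul f$ on the nose. Hence the two functors are mutually inverse strictly, not merely up to natural isomorphism, which gives the asserted isomorphism of categories. As noted, the construction is routine; the only thing to watch is the reversal of the tensor product together with the permutation of arguments in the associator.
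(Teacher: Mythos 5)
Your proposal is correct and follows exactly the route the paper intends: the paper offers no proof beyond declaring the result ``immediate,'' and your verification simply fills in the routine bookkeeping (that $\ul A^{op}$ satisfies the $\Vv^{rev}$-category axioms, that $(-)^{op}$ is functorial, and that applying the construction twice returns the identity on the nose). All the individual checks you describe are accurate, so there is nothing to add.
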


{
The $2$-functor from \cref{Grphfunctor} induces another $2$-functor in our current setting.
}

\begin{proposition}\label{Catfunctor}
Any lax monoidal functor $F:\Vv\to\Ww$ induces a functor $\FCat:\VCat\to\WCat$. Similarly, if $G:\Vv\to\Ww$ is another lax monoidal functor, then any monoidal natural transformation $\alpha:F\Rightarrow G$ induces a natural transformation $\FCat\Rightarrow\GCat$. In fact, we obtain a $2$-functor $\LMonCat\to \Cat$, where $\LMonCat$ denotes the $2$-category of monoidal categories, lax monoidal functors and monoidal natural transformations. It follows that any lax monoidal adjunction between $\Vv$ and $\Ww$ also induces an adjunction between $\VCat$ and $\WCat$.
\end{proposition}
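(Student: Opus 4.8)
The plan is to make $\FCat$ lift the $2$-functor $\Grph$ of \cref{Grphfunctor} along the forgetful functors $\VCat\to\VGrph$ and $\WCat\to\WGrph$, the only new ingredient being that the composition and unit morphisms are transported along the coherence data of the lax monoidal structure. Write $(F_2)_{A,B}:F(A)\ot F(B)\to F(A\ot B)$ and $F_0:I\to F(I)$ for the structure morphisms of $F$. For a $\Vv$-category $\ul A=(A^0,A^1,m,j)$ I set $\FCat(\ul A)$ to have underlying $\Ww$-graph $\FGrph(\ul A)$ (so $\FCat(\ul A)^0=A^0$, $\FCat(\ul A)_{xy}=F(A_{xy})$) together with
$$m'_{xyz}=F(m_{xyz})\circ (F_2)_{A_{xy},A_{yz}}\colon F(A_{xy})\ot F(A_{yz})\to F(A_{xz}),\qquad j'_x=F(j_x)\circ F_0\colon I\to F(A_{xx}),$$
and on a $\Vv$-functor $\ul f$ I set $\FCat(\ul f)$ to be $\FGrph(\ul f)$ regarded as a $\Ww$-graph morphism $\FGrph(\ul A)\to\FGrph(\ul B)$. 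Since the object maps and the graph-level data are already dealt with by \cref{Grphfunctor}, only the statements involving composition and units remain to be checked.

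First I would verify that $\FCat(\ul A)$ is a $\Ww$-category. The associativity square is obtained by pasting: naturality of $F_2$ moves $F(m_{xyz})\ot{\sf id}$ and ${\sf id}\ot F(m_{yzw})$ across the relevant instances of $F_2$, after which the square reduces to the image under $F$ of the associativity square of $\ul A$ together with the associativity coherence axiom $(F_2)_{A\ot B,\,C}\circ\big((F_2)_{A,B}\ot{\sf id}\big)=(F_2)_{A,\,B\ot C}\circ\big({\sf id}\ot(F_2)_{B,C}\big)$ of the lax monoidal functor; as elsewhere in the paper, Mac Lane coherence is used silently to suppress the associativity and unit constraints. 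The two unit triangles are handled the same way, using the two unit coherence axioms relating $F_0$ and $F_2$ and $F$ applied to the unit triangles of $\ul A$. That $\FCat(\ul f)$ is a $\Ww$-functor is the naturality of $F_2$ at $(f_{xy},f_{yz})$ (for the multiplicativity square) and $F$ applied to the unit-preservation equation of $\ul f$ (for the unit equation). Preservation of identities and composites by $\FCat$ is immediate, as $F$ is a functor and $\FCat$ is applied component-wise, so $\FCat:\VCat\to\WCat$ is a functor.

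For the $2$-dimensional part, given a monoidal natural transformation $\alpha:F\Rightarrow G$ I would take, for each $\Vv$-category $\ul A$, the $\Ww$-graph morphism with identity underlying map on $A^0$ and components $\alpha_{A_{xy}}:F(A_{xy})\to G(A_{xy})$, i.e.\ $\Grph$ applied to $\alpha$ at the underlying graph of $\ul A$. Compatibility of $\alpha$ with $F_2,G_2$ makes this preserve composition and compatibility with $F_0,G_0$ makes it preserve units, so it is a $\Ww$-functor $\FCat(\ul A)\to\GCat(\ul A)$; naturality of this family against $\Vv$-functors is again naturality of $\alpha$ in each Hom-component. It then remains to check that the assignment respects composition of lax monoidal functors, horizontal and vertical composition of monoidal natural transformations, and identities; each of these is component-wise and reduces to the corresponding fact for $F$ itself together with the standard formulas for the structure morphisms of a composite, e.g.\ $(GF)_2=G(F_2)\circ(G_2)_{F-,F-}$ and $(GF)_0=G(F_0)\circ G_0$. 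This produces the $2$-functor $\LMonCat\to\Cat$.

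Finally, $2$-functors preserve adjunctions: an adjunction $L\dashv R$ in $\LMonCat$ (i.e.\ a lax monoidal adjunction), with unit $\eta$ and counit $\epsilon$ monoidal natural transformations, is sent to an adjunction between $\VCat$ and $\WCat$ with unit and counit the images of $\eta$ and $\epsilon$, the triangle identities surviving because a $2$-functor preserves pasting. This gives the last assertion. I do not expect a genuine obstacle: the argument is entirely bookkeeping, and the step demanding the most care is the verification of the unit triangles for $\FCat(\ul A)$, since those are precisely the diagrams where $F_0$ meets $F_2$ and one must invoke the two unit coherence axioms of a lax monoidal functor (again modulo Mac Lane coherence, exactly as is done for $\Vv$ in the paper).
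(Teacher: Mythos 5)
Your proposal is correct and follows essentially the same route as the paper: you transport the composition and unit along $\phi^2$ (your $F_2$) and $\phi^0$ (your $F_0$) exactly as the paper does, lift the graph-level $2$-functor of \cref{Grphfunctor}, and conclude the adjunction statement from $2$-functoriality. The only difference is that you spell out the coherence verifications that the paper leaves to the reader.
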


\begin{proof}
Let us denote by $I$ (resp. $J$) the monoidal unit of $\Vv$ (resp.\ $\Ww$) and the monoidal product in both cases by $\ot$. We also denote the monoidal structure on $F$ by $\phi^0:J\to FI$ and $\phi^2:F-\ot F- \to F(-\ot -)$. Consider any $\Vv$-category $\ul A$. We already know by \cref{Grphfunctor} that $F(\ul A)$ is a $\Ww$-graph. It now suffices to observe that the morphisms
$$Fm_{xyz}\circ \phi^2_{A_{xy},A_{yz}}  :FA_{x,y}\ot FA_{y,z} \to FA_{x,z},\ Fj_x\circ \phi^0 : J\to FA_{x,x}$$
endow $F{\ul A}$ with a $\Ww$-category structure. Furthermore, given any morphism $\ul f:\ul A\to \ul B$ of $\Vv$-categories, one can then check that $\FGrph(\ul f)$ as defined in \cref{Grphfunctor} is a morphism of $\Ww$-categories with respect to the $\Ww$-category structure defined here. This defines the functor $\FCat$.
Finally, one observes that if $\alpha:F\Rightarrow G$ is a monoidal natural transformation, then $\alpha-\Grph$ as defined in \cref{Grphfunctor} is in fact a natural transformation from $\FCat$ to $\GCat$.
\end{proof}

\begin{remark}\label{remoppositecat}
Let $\Vv$ be a braided monoidal category, whose braiding we denote by $\sigma$. Then the identify functor and braiding induce a strong monoidal functor $(id,\sigma):\Vv^{rev}\to \Vv$. This functor is moreover an monoidal isomorphism, with inverse $(id,\sigma^{-1})$, where $\sigma^{-1}$ denotes the inverse braiding. Remark that if $\Vv$ is symmetric then both functors coincide. Specifying \cref{Catfunctor} to this monoidal functor, we find that for a braided monoidal category $\Vv$, there is an isomorphism of categories ${\textsf{$\Vv^{rev}${\sf-Cat}}}\to \VCat$. Combining this isomorphism with the isomorphism from \cref{oppositecat}, we obtain a new isomorphism
$$(-)^{op}:\VCat\to\VCat$$
which sends a $\Vv$-category $\ul A$ to its opposite $\Vv$-category, which, by abuse of notation, we still denote as $\ul A^{op}$. It has the opposite $\Vv$-graph as underlying $\Vv$-graph and compositions defined by
$$\xymatrix{A^{op}_{x,y}\ot A^{op}_{y,z} = A_{y,x}\ot A_{z,y} \ar[rr]^-\sigma && A_{z,y}\ot A_{y,x} \ar[rr]^-{m_{zyx}} && A_{z,x} = A^{op}_{x,z}
}$$
\end{remark}

\section{Cofree coalgebras}

\subsection{Existence of cofree coalgebras}

Let now $\Vv$ be a symmetric monoidal category, where we denote the symmetry by $\sigma$.
Then it is well-known that the category $\Coalg(\Vv)$ of coalgebras in $\Vv$ inherits in a natural way a (symmetric) monoidal structure from $\Vv$. Moreover, let us recall the following result which tells that $\Coalg(\Vv)$ also inherits the property of being locally presentable from $\Vv$.

\begin{proposition}[{\cite[2.7, 3.2]{Porst}}]\label{coalglp}
Let $\Vv$ be a closed symmetric monoidal and locally presentable category. Then the following assertions hold.
\begin{enumerate}[(i)]
\item The category $\Coalg(\Vv)$ is closed symmetric monoidal and locally presentable. 
\item The forgetful functor $U:\Coalg(\Vv)\to \Vv$ is strict symmetric monoidal and comonadic, in particular it has a right adjoint $T^c:\Vv\to \Coalg(\Vv)$ which creates the ``cofree coalgebra'' over an object in $\Vv$.  
\end{enumerate}
\end{proposition}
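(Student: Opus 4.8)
The plan is to prove local presentability of $\Coalg(\Vv)$ first, and then to read off the monoidal-closed structure, the right adjoint $T^c$, and comonadicity of $U$ as formal consequences of the adjoint functor theorem \cref{adjointlp} together with Beck's comonadicity theorem. The only genuinely substantive point is accessibility of $\Coalg(\Vv)$; everything else is bookkeeping.

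First I would record that, since $\Vv$ is closed, each endofunctor $(-)\ot V$ and $V\ot(-)$ has a right adjoint (the internal hom) and hence preserves all colimits. From this one checks in the standard way that the forgetful functor $U\colon\Coalg(\Vv)\to\Vv$ \emph{creates} colimits: for a diagram of $\Vv$-coalgebras one forms its colimit $C$ in $\Vv$ and uses cocontinuity of $\ot$ in each variable to transport the comultiplications and counits to the unique coalgebra structure on $C$ for which the colimit cocone consists of coalgebra morphisms. In particular $\Coalg(\Vv)$ is cocomplete and $U$ preserves colimits. Next I would exhibit $\Coalg(\Vv)$ as an accessible category built from $\Vv$ by the standard accessible constructions: objects carrying only operations $\Delta\colon C\to C\ot C$ and $\epsilon\colon C\to I$ form an iterated inserter of the accessible functors $\Id$, $C\mapsto C\ot C$ and the constant functor at $I$, and the coassociativity and counitality axioms cut $\Coalg(\Vv)$ out of this as an iterated equifier of accessible functors. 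Since accessible categories are closed under inserters and equifiers, $\Coalg(\Vv)$ is accessible; being also cocomplete, it is locally presentable. That the tensor product of $\Vv$ lifts to $\Coalg(\Vv)$ (the symmetry $\sigma$ being used to build $\Delta_{C\ot D}$), with unit $I$ and symmetry induced by $\sigma$, and that $U$ is then strict symmetric monoidal, is the well-known fact recalled just above.

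It then remains to produce the internal hom and the right adjoint $T^c$, and to verify comonadicity. For a $\Vv$-coalgebra $C$, the functor $C\ot(-)$ on $\Coalg(\Vv)$ satisfies $U\circ(C\ot(-))=((UC)\ot(-))\circ U$; since $U$ creates colimits and $(UC)\ot(-)$ is cocontinuous, $C\ot(-)$ preserves colimits, so by the first part of \cref{adjointlp} it has a right adjoint, i.e.\ $\Coalg(\Vv)$ is closed symmetric monoidal. Likewise $U$ preserves colimits between locally presentable categories, hence has a right adjoint $T^c\colon\Vv\to\Coalg(\Vv)$ by the first part of \cref{adjointlp}; by construction $T^c$ is the cofree-coalgebra functor. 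Finally, $U$ reflects isomorphisms (the $\Vv$-inverse of a coalgebra morphism is again a coalgebra morphism), and $U$ creates equalizers of $U$-split pairs: for $f,g\colon C\rightrightarrows D$ whose $U$-images have a split equalizer $e\colon E\to UC$ in $\Vv$, absoluteness of split equalizers makes $E\ot E$ the equalizer of $f\ot f$ and $g\ot g$, so $\Delta_C\circ e$ factors uniquely through $e\ot e$; together with $\epsilon_C\circ e$ this yields the unique coalgebra structure on $E$ along $e$, the axioms holding because $e$ and $e\ot e\ot e$ are split monomorphisms. By Beck's comonadicity theorem, $U$ is comonadic.

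The main obstacle is the accessibility step: establishing that $\Coalg(\Vv)$ is accessible (equivalently, sketchable) through the inserter/equifier presentation is precisely where the cited source \cite[2.7]{Porst} does the real work, whereas the subsequent statements are routine applications of \cref{adjointlp} and Beck's theorem. Some care is also needed to check that the functors entering the equifiers really are functors on the inserter categories and that the indicated assignments really are natural transformations between them — a short but not entirely automatic verification.
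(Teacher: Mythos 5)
Your argument is correct and follows essentially the same route as the source the paper cites for this result (Porst, \emph{Quaest.\ Math.}\ 31 (2008), 2.7 and 3.2): local presentability of $\Coalg(\Vv)$ via cocompleteness (the forgetful functor creating colimits thanks to cocontinuity of $\ot$ in each variable) together with accessibility from the inserter/equifier presentation, and then the closed structure, the cofree functor $T^c$, and comonadicity of $U$ obtained from the adjoint functor theorem for locally presentable categories and Beck's theorem. The paper itself offers no independent proof of this proposition, so there is nothing further to compare.
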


{ 
More explicitly, the ``cofree coalgebra'' over an object $V$ in a monoidal category $\Vv$, is a coalgebra $T^c(V)$ in $\Vv$ endowed with a $\Vv$-morphism $p:T^c(V)\to V$ such that for any other coalgebra $C$ in $\Vv$ and any $\Vv$-morphism $\gamma:C\to V$, there exists a unique coalgebra morphism $u:C\to T^c(V)$ such that $\gamma=\tau \circ u$.
\[
\xymatrix{
T^c(V) \ar[drr]^-p \\
&& V \\
C \ar[urr]_-\gamma \ar@{.>}[uu]^u
}
\]
}

\begin{remark}
In contrast to free algebras in $\Vv$, or more generally, free $\Vv$-categories (see Remark \ref{rem:freeVcat}), cofree coalgebras are usually more difficult to construct explicitly. Indeed, naively one would try to make a dual construction of free algebras by considering the product $T^0(V):=\prod_{n\in \NN} V^{\ot n}$. However, since usually infinite products are not preserved by tensoring with a fixed object in $\Vv$, there is no way in general to endow $T^0(V)$ with a suitable comultiplication. In fact, or at least to the knowledge of the authors, an explicit construction of cofree coalgebra over an object $V$ in $\Vv$ is only known in particular cases, such as $\Vv=\Mod_k$ with $k$ a field (see \cite{BloLer}) or a commutative ring (see \cite{Fox}). Our aim is to generalize this construction to the case of a monoidal category whose monoidal product is sufficiently exact in the sense pointed out in the next subsection. 
\end{remark}

\subsection{Very flat monoidal products}\selabel{veryflat}

The following terminology of ``(very) flat monoidal product''  is inspired by the module-theoretic case and will be essential futher in this paper.

\begin{definition}
Let $\Vv$ be a monoidal category and $A$ an object in $\Vv$. We say that $A$ is (left) {\em flat} if the endofunctor $-\ot A$ on $\Vv$ preserves monomorphisms.

We say that $A$ is (left) {\em very flat} if the endofunctor $-\ot A$ on $\Vv$ preserves arbitrary jointly monic families. This means, for any jointly monic family of morphisms $\beta_i:B\to B_i$ in $\Vv$, we have that the family $\beta_i\ot {\sf id}_A:B\ot A\to B_i\ot A$
is also jointly monic.

Similarly, we can consider right (very) flat objects. If $\Vv$ is symmetric (or even braided) left and right (very) flatness are equivalent. Since the categories we consider are symmetric we will not distinguish between left and right from now on.

If $\Vv$ is a monoidal category such that all objects are (very) flat, we say that $\Vv$ is has a {\em (very) flat monoidal product}.
\end{definition}

Although flat modules (that is, flat objects in a category of (bi)modules over a ring) are a standard notion, what we called ``very flat'' seems to be un(der)studied in literature. It is well-known (and can also be seen from Lemma \ref{tensorjointlymonic}(i) below) that a functor preserves jointly monic families if it preserves monomorphisms and products, in particular if it preserves all limits. The later conditions is very strong for a functor of the form $-\ot A:\Mod_k\to\Mod_k$, where $A$ is a $k$-module, since it expresses that the module is finitely generated and projective. The next examples show that ``very flatness'' is a much weaker condition for modules, being strictly in between flatness and projectivity.

\begin{examples}
\begin{enumerate}
\item Obviously, every very flat module is flat.
\item It is well-known that the $\ZZ$-module $\QQ$ is flat, however it is not very flat. Indeed, consider the jointly monic family of morphisms $\pi_p:\ZZ\to \ZZ_p$ for all prime numbers $p$. Since for any $p$, we have that $\ZZ_p\ot_\ZZ \QQ$ is the zero module, the family $\pi_p\ot_\ZZ\QQ$ is not jointly monic, hence $\QQ$ is not very flat as $\ZZ$-module.
\item Any locally projective module (in the sense of Zimmermann-Huisgen \cite{ZH}, called weakly locally projective in \cite{Ver:local}) is very flat, in particular every projective module is very flat. Indeed, let $k$ be a (commutative) ring and $P$ a locally projective $k$-module. Now consider a jointly monic family of $k$-module morphisms $\alpha_j:A\to A_j$. We claim that $P\ot_k\alpha_j:P\ot_k A\to P\ot_k A_j$ is again jointly monic. Indeed, consider any element $p\ot a \in P\ot_k A$ (summation understood) such that $p\ot \alpha_j(a)=0$ for all $j$. Then for any $f\in P^*$, we have that $f(p)\alpha_j(a)=\alpha_j(f(p)a)=0$ for all $j$. Since the $\alpha_j$ were jointly monic, we find that $f(p)a=0$ for all $f\in P^*$. Then the local projectivity of $P$ implies that $p\ot_k a=0$ (see e.g. \cite[Theorem 2.15]{Ver:local}).
{ Since there exist locally projective modules that are not projective (see \cite{ZH}), not every very flat module is projective\footnote{One could however wonder whether local projectivity is equivalent to very flatness.}.}
\item
From the above examples, it follows that the categories of projective modules and of locally projective modules over a commutative ring $k$, form a symmetric monoidal category with very flat tensor product. Similarly, modules over a triangular bialgebra (over a commutative ring $k$) that are locally projective over the base ring form a monoidal category with very flat tensor product (since the tensor product in this case is just the $k$-tensor product). 
Also $\Set$ with the cartesian product is a monoidal category with very flat monoidal product.
\end{enumerate}
\end{examples}

The following properties of very flatness will be crucial in what follows.

\begin{lemma}\label{tensorjointlymonic}
\begin{enumerate}
\item
Let $\Vv$ be a monoidal category and $A$ an object in $\Vv$. Then $A$ is very flat if and only if $A$ is flat and for any family objects $(B_i)_{i\in I}$ in $\Vv$, the canonical mophism $\jmath$ that makes the following diagrams commutative for all $i\in I$
$$\xymatrix{\left(\prod_{i\in I} B_i\right)\ot A \ar[dr]_-{\pi_i\ot {\sf id}_A} \ar[rr]^-\jmath && \prod_{i\in I}(B_i\ot A)  \ar[dl]^-{\pi'_i} \\
& B_i \ot A
}$$
is a monomorphism. Here we denoted by $\pi_i$ and $\pi'_i$ the canonical projections on the $i$th component of the considered products.
\item Let $\Vv$ be a monoidal category with very flat monoidal product. If $(\alpha_i:A\to A_i)_{i\in I}$ and $(\beta_j:B\to B_j)_{j\in J}$ are jointly monic families of morphisms, then $(\alpha_i\ot \beta_j:A\ot B\to A_i\ot B_j)_{(i,j)\in I\times J}$ is also jointly monic. In other words, tensor products of jointly monic families are again jointly monic.
\item
Let $\Vv$ be a monoidal category with very flat monoidal product.
Then for any two families $(A_i)_{i\in I}$ and $(B_j)_{j\in J}$ of objects in $\Vv$ the canonical morphism $\iota$ defined by the commutativity of the following diagram for all $i\in I$ and $j\in J$, is a monomorphism.
\[
\xymatrix{
\left(\prod_{i\in I} A_i\right)\ot \left(\prod_{j\in J} B_j\right) \ar[dr]_-{\pi_i\ot \pi_j} \ar[rr]^-\iota 
&& \prod_{(i,j)\in I\times J} (A_i\ot B_j) \ar[dl]^-{\pi_{i,k}}\\
& A_i\ot B_j
}\]
Here $\pi_i$, $\pi_j$ and $\pi_{i,j}$ denote the canonical projections of the considered products in $\Vv$.
\end{enumerate}
\end{lemma}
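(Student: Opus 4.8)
The plan is to prove the three statements in order, since each feeds into the next. For part (1), the ``only if'' direction is almost immediate: if $A$ is very flat, then in particular the family $\pi_i:\prod_{i\in I}B_i\to B_i$ is jointly monic, so $(\pi_i\otimes\mathsf{id}_A)_{i\in I}$ is jointly monic; but this family factors through $\jmath$ as $\pi'_i\circ\jmath$, and a family that factors jointly-monically through a single morphism forces that morphism to be monic. Flatness is of course a special case of very flatness. For the ``if'' direction, I would start from an arbitrary jointly monic family $\beta_i:B\to B_i$. Jointly monic means precisely that the induced map $\langle\beta_i\rangle:B\to\prod_{i\in I}B_i$ is a monomorphism. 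Since $A$ is flat, $\langle\beta_i\rangle\otimes\mathsf{id}_A:B\otimes A\to(\prod_i B_i)\otimes A$ is again monic; post-composing with the monomorphism $\jmath$ gives a monomorphism $B\otimes A\to\prod_i(B_i\otimes A)$ whose components are exactly $\beta_i\otimes\mathsf{id}_A$ (this is the commutativity of the triangle, using $\pi_i\circ\langle\beta_j\rangle=\beta_i$). A composite of monomorphisms being monic, the map $B\otimes A\to\prod_i(B_i\otimes A)$ is monic, i.e.\ $(\beta_i\otimes\mathsf{id}_A)_i$ is jointly monic.

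For part (2), given jointly monic families $(\alpha_i)_{i\in I}$ and $(\beta_j)_{j\in J}$, I would factor the desired family through two applications of very flatness. Tensoring the first family on the right by $\mathsf{id}_B$ yields, by very flatness of $B$, a jointly monic family $(\alpha_i\otimes\mathsf{id}_B:A\otimes B\to A_i\otimes B)_{i\in I}$. Then for each fixed $i$, tensoring the second family on the left by $\mathsf{id}_{A_i}$ yields, by very flatness of $A_i$, a jointly monic family $(\mathsf{id}_{A_i}\otimes\beta_j:A_i\otimes B\to A_i\otimes B_j)_{j\in J}$. Composing a jointly monic family with (componentwise) jointly monic families refining each component is again jointly monic — this is a routine diagram chase: if $g,h:C\to A\otimes B$ agree after all $\alpha_i\otimes\beta_j$, then for each $i$ they agree after all $\mathsf{id}_{A_i}\otimes\beta_j$ composed with $\alpha_i\otimes\mathsf{id}_B$, hence (by the $j$-family) they agree after $\alpha_i\otimes\mathsf{id}_B$ for each $i$, hence (by the $i$-family) $g=h$. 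Since $\alpha_i\otimes\beta_j=(\mathsf{id}_{A_i}\otimes\beta_j)\circ(\alpha_i\otimes\mathsf{id}_B)$, this gives the claim.

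For part (3), I would combine (1) and (2). By part (2) applied to the two jointly monic families of projections $(\pi_i:\prod_{i\in I}A_i\to A_i)_{i\in I}$ and $(\pi_j:\prod_{j\in J}B_j\to B_j)_{j\in J}$, the family $(\pi_i\otimes\pi_j)_{(i,j)\in I\times J}$ on $(\prod_i A_i)\otimes(\prod_j B_j)$ is jointly monic; equivalently the induced map into $\prod_{(i,j)}(A_i\otimes B_j)$, which is exactly $\iota$, is a monomorphism. (Alternatively one can get this directly by factoring $\iota$ as $(\prod_i A_i)\otimes(\prod_j B_j)\to\prod_i\bigl(A_i\otimes(\prod_j B_j)\bigr)\to\prod_i\prod_j(A_i\otimes B_j)=\prod_{(i,j)}(A_i\otimes B_j)$, where the first arrow is monic by part (1) for $A:=\prod_j B_j$ and the second is a product of the monic maps supplied by part (1) for each $A_i$, using that products of monomorphisms are monomorphisms.) I do not anticipate a genuine obstacle here; the only mild subtlety is bookkeeping the passage between ``jointly monic family'' and ``induced map into the product is monic,'' and making sure the relevant triangles (defining $\jmath$ and $\iota$) are invoked with the correct projections — but these are exactly the commuting diagrams given in the statement, so the argument is forced.
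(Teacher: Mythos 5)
Your proposal is correct and follows essentially the same route as the paper's proof: part (1) via the factorization $(\beta_i\ot\mathsf{id}_A)=\pi'_i\circ\jmath\circ(\beta\ot\mathsf{id}_A)$, part (2) via the factorization $\alpha_i\ot\beta_j=(\mathsf{id}_{A_i}\ot\beta_j)\circ(\alpha_i\ot\mathsf{id}_B)$ and very flatness of $B$ and each $A_i$, and part (3) by applying part (2) to the two families of projections. The only difference is that you spell out the routine ``composition of jointly monic families is jointly monic'' chase that the paper merely asserts.
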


\begin{proof}
\ul{(1)}. Suppose that $A$ is very flat, then we know that $A$ is flat. Furthermore, the projections $\pi_i$ and $\pi'_i$ form jointly monic families. Therefore, the commutativity of the above diagram tell us that $\jmath$ is a monomorphism if and only if $\pi_i\ot {\sf id}_A$ is a jointly monic family, which is the case since $A$ is very flat. Conversely, consider a jointly monic family $\beta_i:B\to B_i$. Then the map $\beta=\prod \beta_i:B\to \prod_{i\in I} B_i$ is a monomorphism and since $A$ is flat $\beta\ot {\sf id}_A$ is also a monomorphism. We then have the following commutative diagram
\[
\xymatrix{
B\ot A \ar[d]_{\beta\ot {\sf id}_A} \ar[rr]^-{\beta_i\ot {\sf id}_A} && B_i\ot A\\
\left(\prod_{i\in I} B_i\right)\ot A \ar[rr]^-{\jmath} && \prod_{i\in I}(B_i\ot A) \ar[u]_-{\pi'_i}
}
\]
Since $\beta\ot {\sf id}_A$ and $\jmath$ are monic and $\pi'_i$ are jointly monic, we find that $\beta_i\ot {\sf id}_A$ are also jointly monic, hence $A$ is very flat.\\
\ul{(2)}. By very flatness, the morphisms $(\alpha_i\ot {\sf id}_B)_{i\in I}$ and $({\sf id}_{A_i}\ot \pi_j)_{j\in J}$ are also jointly monic families. 
Since $\alpha_i\ot \beta_j=(\alpha_i\ot B)\circ (A_i\ot \beta_j)$, we find that the family $(\alpha_i\ot \beta_j)_{(i,j)\in I\times J}$ being a composition of jointly monic families is again jointly monic.\\
\ul{(3)}. Observe that $\iota$ is a monomorphism if and only if $(\pi_i\ot \pi_j)_{(i,j)\in I\times J}$ is a jointly monic family. 
Classical properties of a product tell us that $(\pi_i)_{i\in I}$ and $(\pi_j)_{j\in J}$ are jointly monic families hence the statement follows by part (2).
\end{proof}

\subsection{The construction of cofree coalgebras}\selabel{cofree}

In the next Theorem, we will use the following notation. 
Let $(C,\Delta,\epsilon)$ be a coalgebra in $\Vv$. Then we define for all $n\in \NN$ a morphism $\Delta^n:C\to C^{\ot n}$ as follows:
$\Delta^0=\epsilon$ and for all $n>0$:
$$\Delta^n=(\Delta^{n-1}\ot {\sf id}_C)\circ \Delta,$$
in particular, $\Delta^1={\sf id}_C$ and $\Delta^2=\Delta$.

{
We are now ready to formulate and proof a first main theorem, concerning the existence of cofree coagebras. It is worthwhile to remark that in contrast to Proposition~\ref{coalglp}, we do not need to require that the monoidal product of $\Vv$ is symmetric.
}

\begin{theorem}\label{cofreeconstruction}
Let $\Vv$ be a closed monoidal and locally presentable category, with very flat monoidal product.
Let $V$ be an object of $\Vv$ and consider the product $\prod_{k\in \NN} V^{\ot k}$ where $V^{\ot 0}=I$, the monoidal unit. Denote by 
$\pi_n:\prod_{k\in \NN} V^{\ot k}\to V^{\ot k}$ the canonical projection on the 
$k$-th component of the product.

Then the cofree coalgebra $C=T^c(V)$ over an object $V$ in $\Vv$ is the largest subobject $c:C\to \prod_{k \in \NN} V^{\ot k}$ in $\Vv$, such that there exists a morphism $\Delta:C\to C\ot C$ satisfying $(\pi_k\ot \pi_l)\circ (c\ot c)\circ \Delta=\pi_{k+l}\circ c$ for all $k,l\in \NN$. 
\begin{equation}\label{propCc}
\xymatrix{
C \ar[d]^\Delta \ar[r]^-c & \prod_{k\in \NN}V^{\ot k} \ar[r]^-{\pi_{k+l}} & V^{\ot(k+l)} \ar[d]^\cong  \\
C\ot C \ar[r]^-{c\ot c} & \left(\prod_{k\in \NN}V^{\ot k}\right)\ot \left(\prod_{k\in \NN}V^{\ot k}\right) \ar[r]^-{\pi_k\ot \pi_l} 
& V^{\ot k}\ot V^{\ot l}
}
\end{equation}
The counit of the forgetful-cofree adjunction is then given by $p=\pi_1\circ c:C\to V$ and the morphisms $p^{\ot k}\circ \Delta^k:C\to V^{\ot k}$ are jointly monic.
\end{theorem}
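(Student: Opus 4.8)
The plan is to recast the existence of the comultiplication as a factorisation property through a monomorphism, then to produce the largest subobject enjoying that property as a union of subobjects, and finally to verify the coalgebra axioms and the universal property of the cofree coalgebra. Throughout write $P=\prod_{k\in\NN}V^{\ot k}$ (so $P$ is the object of the statement) and $W=\prod_{(k,l)\in\NN\times\NN}V^{\ot k}\ot V^{\ot l}$, with canonical projections $\pi_k$ and $\pi_{k,l}$.

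\emph{Reformulation.} Since the monoidal product of $\Vv$ is very flat, \cref{tensorjointlymonic}(3) shows the canonical morphism $\iota\colon P\ot P\to W$ is a monomorphism; and since every object is flat, for any monomorphism $c\colon C\to P$ the morphism $c\ot c$ is monic, hence so is $\iota\circ(c\ot c)\colon C\ot C\to W$, whose $\pi_{k,l}$-component is $(\pi_k\ot\pi_l)\circ(c\ot c)$. Thus a morphism $\Delta\colon C\to C\ot C$ satisfies $(\pi_k\ot\pi_l)(c\ot c)\Delta=\pi_{k+l}c$ for all $k,l$ if and only if $\iota(c\ot c)\Delta$ equals the morphism $\langle\pi_{k+l}c\rangle\colon C\to W$ with $\pi_{k,l}$-components $\pi_{k+l}c$. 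Consequently such a $\Delta$ is \emph{unique} when it exists, and it exists \emph{precisely when} $\langle\pi_{k+l}c\rangle$ factors through the subobject $\iota(c\ot c)$ of $W$.

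\emph{The largest subobject.} As $\Vv$ is locally presentable it is well-powered, so the subobjects $c\colon C\to P$ admitting such a $\Delta$ form a set $\Ss$. Let $c\colon C\to P$ be their union, realised from the (strong epi, mono)-factorisation $\coprod_{i\in\Ss}C_i\xrightarrow{\,e\,}C\xrightarrow{\,c\,}P$ of the canonical morphism (\cref{epi-mono}), and set $e_i=e\circ\mathrm{in}_i\colon C_i\to C$; each $e_i$ is monic since it is a factor of $c_i=c\,e_i$, hence $e_i\ot e_i$ is monic. Let $g\colon\coprod_i C_i\to C\ot C$ be the morphism with $g\,\mathrm{in}_i=(e_i\ot e_i)\Delta_i$, where $\Delta_i$ is the comultiplication associated to $c_i$ by the previous step; checking on each $\mathrm{in}_i$, using $c\,e_i=c_i$ and the defining relation for $\Delta_i$, gives $\iota(c\ot c)\,g=\langle\pi_{k+l}c\rangle\circ e$. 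Since $e$ is a strong epimorphism and $\iota(c\ot c)$ a monomorphism, the diagonal fill-in yields $\Delta\colon C\to C\ot C$ with $\iota(c\ot c)\Delta=\langle\pi_{k+l}c\rangle$, i.e.\ $c\in\Ss$; hence $C$ is the largest element of $\Ss$.

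\emph{Coalgebra structure and the last assertion.} Put $\epsilon=\pi_0c\colon C\to I$ and $p=\pi_1c\colon C\to V$. Because $c\ot c$ and $c\ot c\ot c$ are monic and the families $(\pi_k\ot\pi_l)_{k,l}$ and $(\pi_k\ot\pi_l\ot\pi_m)_{k,l,m}$ are jointly monic (\cref{tensorjointlymonic}(2)), coassociativity and counitality of $(C,\Delta,\epsilon)$ follow by postcomposing the two sides of each axiom with these jointly monic families and applying the defining relation for $\Delta$ repeatedly; for instance $(\pi_k\ot\pi_l\ot\pi_m)(c^{\ot 3})(\Delta\ot\mathrm{id})\Delta=\pi_{k+l+m}c=(\pi_k\ot\pi_l\ot\pi_m)(c^{\ot 3})(\mathrm{id}\ot\Delta)\Delta$, and $\pi_l c=\pi_{0+l}c=(\pi_0 c\ot\pi_l c)\Delta$ yields left counitality via naturality of the unitor. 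An induction on $n$, using the defining relation with $l=1$, then shows $\pi_n c=p^{\ot n}\circ\Delta^n$ for all $n$; since $c$ is monic and $(\pi_n)_n$ jointly monic, the family $(p^{\ot n}\Delta^n)_n$ is jointly monic, which is the final claim.

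\emph{Universal property, and the main obstacle.} Given a coalgebra $(D,\Delta_D,\epsilon_D)$ and $\gamma\colon D\to V$, put $\gamma_n=\gamma^{\ot n}\Delta_D^n$ and $\bar u=\langle\gamma_n\rangle\colon D\to P$; coassociativity of $\Delta_D$ gives $\Delta_D^{k+l}=(\Delta_D^k\ot\Delta_D^l)\Delta_D$, hence $\gamma_{k+l}=(\gamma_k\ot\gamma_l)\Delta_D$. Factoring $\bar u$ as $D\xrightarrow{e'}D'\xrightarrow{d}P$ (strong epi, mono), this identity rewrites as $\langle\pi_{k+l}d\rangle\circ e'=\iota(d\ot d)\circ\bigl((e'\ot e')\Delta_D\bigr)$, so the diagonal fill-in produces $\Delta_{D'}$ with $(\pi_k\ot\pi_l)(d\ot d)\Delta_{D'}=\pi_{k+l}d$; thus $D'\in\Ss$, whence $D'\subseteq C$ and $\bar u=c\,u$ for a unique $u\colon D\to C$. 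Then $p\,u=\pi_1\bar u=\gamma$, $\epsilon\,u=\pi_0\bar u=\epsilon_D$, and $\Delta u=(u\ot u)\Delta_D$ (compare after the monomorphism $\iota(c\ot c)$, using $\gamma_{k+l}=(\gamma_k\ot\gamma_l)\Delta_D$), so $u$ is a coalgebra morphism over $\gamma$; it is the only one, since any coalgebra morphism $u'$ with $p\,u'=\gamma$ satisfies $\pi_n c\,u'=p^{\ot n}\Delta^n u'=(pu')^{\ot n}\Delta_D^n=\gamma_n$ by the previous paragraph, forcing $c\,u'=\bar u=c\,u$. Hence $(C,\Delta,\epsilon)$ together with $p$ is the cofree coalgebra over $V$. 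The delicate point is the construction of the largest subobject: one must know that ``admitting a compatible $\Delta$'' is stable under unions of subobjects, and this hinges on $\iota\colon P\ot P\to W$ — and therefore $C\ot C\to W$ — being monomorphisms, which is exactly where very flatness (through \cref{tensorjointlymonic}) is indispensable, combined with the orthogonality of strong epimorphisms and monomorphisms provided by local presentability (\cref{epi-mono}).
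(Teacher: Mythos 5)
Your proof is correct and follows essentially the same route as the paper's: the same family of subobjects of $\prod_{k}V^{\ot k}$ admitting a compatible $\Delta$, the same construction of the largest one via the coproduct followed by a (strong epi, mono)-factorization and diagonal fill-in, the same verification of coassociativity/counitality through the jointly monic families supplied by very flatness, and the same factorization of $\overline{\gamma}=(\gamma^{\ot n}\Delta_D^{n})_n$ for the universal property. The only (welcome) refinements are your explicit reformulation of the defining condition as a factorization through the monomorphism $\iota\circ(c\ot c)$, which makes the uniqueness of $\Delta$ and of the induced coalgebra morphism fully explicit.
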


\begin{proof}
Consider the family $\Cc$ of all sub-objects $c:C\to \prod_{k\in \NN}V^{\ot k}$ in $\Vv$, endowed with a morphism $\Delta:C\to C\ot C$ satisfying \eqref{propCc}.
We claim that such $\Delta$ is always coassociative and counital by means of $\epsilon_C=\pi_0\circ c$, i.e.\ $(C,\Delta_C,\epsilon_C)$ is a coalgebra. To prove the left counitality consider $(\epsilon_C\ot {\sf id})\circ \Delta:C\to I\ot C\cong C$ and compose these with the jointly monic family $(\pi_k\circ c)_{n\in \NN}$. Then the commutativity of \eqref{propCc} implies that 
$$\pi_k\circ c\circ (\epsilon_C\ot {\sf id})\circ \Delta = (\pi_0\ot \pi_k)\circ (c\ot c)\circ \Delta = \pi_k\circ c$$
for any $k\in \NN$, hence $(\epsilon_C\ot {\sf id})\circ \Delta={\sf id}_C$. The right counitality is proven in the same way. To prove the coassociativity, consider the parallel pair of morphisms $(\Delta\ot{\sf id})\circ \Delta, ({\sf id}_C\ot \Delta)\circ \Delta:C\to C\ot C\ot C$ and compose them with $(\pi_k\ot \pi_l\ot \pi_m)\circ (c\ot c\ot c)$. Using twice the commutativity of the diagram \eqref{propCc}, we find that both compositions equal $\pi_{k+l+m}\circ c$. Since the monoidal product is very flat, the family $((\pi_k\ot \pi_l\ot \pi_m)\circ (c\ot c\ot c))_{k,l,m\in\NN}$ is jointly monic being a tensor product of jointly monic families (see \cref{tensorjointlymonic}), hence $\Delta$ is coassociative.

Now consider any coalgebra $(C,\Delta,\epsilon)$, and $\gamma:C\to V$ a morphism in $\Vv$. Then for any $n\in \NN$, we define
$$\gamma_k:\xymatrix{C \ar[r]^-{\Delta^k} & C^{\ot k} \ar[r]^-{\gamma^{\ot k}} & V^{\ot k} }.$$
Consequently, we obtain a map $\ol\gamma=(\gamma_k)_{k\in \NN} :C\to \prod_{k\in \NN} V^{\ot k}$. Since $\Vv$ is locally presentable, we have a strong epi-mono factorization for $\ol\gamma$ (see \cref{epi-mono}):
$$\ol\gamma:\xymatrix{C\ar[r]^-e & C' \ar[r]^-m &\prod_{k\in \NN}V^{\ot k}}.$$
We claim that $(C', \Delta', \pi_0\ot m)$ is again a coalgebra and $e$ is a coalgebra morphism. Indeed, since $e$ is a strong epimorphism and $\iota\circ (m\ot m)$ is a monomorphism (remark that since the monoidal product is (very) flat, $m\ot m$ is a monomorphism as $m$ is one), we obtain the existence of a morphism $\Delta'$ filling the following commutative diagram. Recall, that the unique map $\iota$ such that $\pi_{p,q}\circ \iota=\pi_p\ot \pi_q$  is a monomorphism as in \cref{tensorjointlymonic}.
\[
\xymatrix{
C \ar[d]_\Delta \ar[r]^-e & C' \ar@{.>}[d]^{\Delta'} \ar[rr]^-m && \prod_{k\in \NN}V^{\ot k} \ar[d]^-D \ar[r]^-{\pi_{p+q}} & V^{\ot (p+q)} \ar[d]^\cong \\
C\ot C \ar[r]^-{e\ot e} & C'\ot C' \ar[r]^-{m\ot m}& \prod_{k\in \NN}V^{\ot k}\ot \prod_{k\in \NN}V^{\ot k} \ar[r]^-\iota 
& \prod_{p,q\in \NN}(V^{\ot p}\ot V^{\ot q}) \ar[r]^-{\pi_{p,q}} & V^{\ot p} \ot V^{\ot q}
}
\]
The morphism $D$ in the above diagram is defined by the universal property of the product in its domain.
Then we see that $(m:C'\to \prod V^{\ot n},\Delta')$ is an element of $\Cc$, therefore $(C',\Delta',\pi_0\circ m)$ is a coalgebra by the first part of the proof. The morphism $e$ is a coalgebra morphism by the commutativity of the left square in the above diagram. We conclude that for any morphism $\gamma:C\to V$ with $C$ a coalgebra, we obtain a factorisation
$$\gamma = \pi_1\circ \ol\gamma = \pi_1 \circ m\circ e$$
where by construction, $e:C\to C'$ is a coalgebra morphism and $(C',m)$ is an object of $\Cc$.

We are now ready to construct the cofree coalgebra over $V$. 
Since we assumed that $\Vv$ is locally presentable, $\Vv$ is in particular well-powered, and hence $\Cc$ is a set (and not a proper class). 
{ Consider the coproduct $\coprod_\Cc C$ over all objects $(C,c)$ in $\Cc$, which is, being a colimit of coalgebras in $\Vv$, again a coalgebra in $\Vv$, whose comultiplication we denote by $\Delta$.

The morphisms $c: C\to \prod_{k\in \NN}V^{\ot k}$ induce via the universal property of the coproduct a morphism  $[c]_{\Cc}:\coprod_\Cc C\to \prod_{k\in \NN}V^{\ot k}$, and define $\gamma=\pi_1\circ [c]_\Cc$. Then we can apply the procedure from the above paragraph. To this end, consider the morphisms $\gamma_k := \gamma^{\ot k}\circ \Delta^k$. We claim that the induced morphism $\ol \gamma = (\gamma_k)_{k\in \NN}:\coprod_\Cc C\to \prod_{k\in \NN}V^{\ot k}$ equals $[c]_\Cc$. To this end, we consider the following diagram, in which we precomposed both morphisms with the canonical monomorphism $\iota_C: C\to \coprod_\Cc C$ and the canonical epimorphism $\pi_n:\prod_{k\in \NN}V^{\ot k}\to V^{\ot n}$ for an arbitrary $C$ in $\Cc$ and $n$ in $\NN$.
\[
\xymatrix{
C \ar[rr]^-{\iota_C} \ar[dd]_-{\iota_C} \ar[drr]^-{\Delta_C^n} \ar[ddrr]_-{c} && \coprod_{\Cc} C \ar[rrr]^-{\Delta^{n}}   &&& (\coprod_\Cc C)^{\ot n} \ar[dl]^-{([c]_\Cc)^{\ot n}} \ar[dd]^-{\gamma^{\ot n}} \\
&& C^{\ot n} \ar[urrr]^-{(\iota_C)^{\ot n}} \ar[rr]_-{c^{\ot n}} && \left(\prod_{k\in \NN} V^{\ot k}\right)^{\ot n} \ar[dr]_-{(\pi_1)^{\ot n}} \\
\coprod_\Cc C \ar[rr]_-{[c]_\Cc} && \prod_{k\in \NN} V^{\ot k} \ar[rrr]_-{\pi_n} &&& V^{\ot n}
}
\]
In this diagram, the triangles and quadrangles commute by definition of the various maps (and universal property of the coproduct). The inner pentangle commutes by iteration of \eqref{propCc} which is satisfied by all elements of $\Cc$. 

By the above, we then find that $\ol\gamma=\coprod c$ admits a strong epi-mono factorization of the form}
$$\xymatrix{\coprod_\Cc c:\ \coprod_{\Cc} C \ar[r]^-{e'} & C(V) \ar[r]^-{m'} &  \prod_{k\in \NN}V^{\ot k}}$$
where $C(V)$ is a coalgebra and $e'$ is a coalgebra morphism.  By construction, $(C(V),m')$ satisfies \eqref{propCc} and therefore is itself an element of $\Cc$. For any (other) $(C,c)$ in $\Cc$, we then find that $c=[c]_\Cc\circ \iota_C=m'\circ e'\circ \iota_C$ is a monomorphism and therefore $e'\circ \iota_C : C\to C(V)$ is a monomorphism. This shows that $C(V)$ is indeed the largest element of $\Cc$.

If we now consider again any coalgebra $C$ with a morphism $\gamma:C\to V$, then then the results above show that $\gamma$ factors uniquely through $C(V)$ via a coalgebra morphism (the morphism $e'\circ \iota_{C'}\circ e$ in the diagram below, where $\iota_{C'}$ is the canonical monomorphism of the coproduct), so $C(V)$ satisfies the universal property of the cofree coalgebra  (see paragraph after \cref{coalglp}).
\[
\xymatrix{
C \ar[rrrr]^-\gamma \ar[dr]_-e &&&& V\\
& C' \ar[urrr]^-m \ar[r]_-{\iota_{C'}} & \coprod_{D\in \Cc} D \ar[r]^-{e'} & C(V) \ar[ru]_-{m'}
}
\]

Since by construction, $c:C(V)\to \prod_{k\in \NN}V^{\ot k}$ is a subobject such that $\pi_k\circ c=p^{\ot k}\circ \Delta^k$, these form a jointly monic family indexed by $k\in \NN$.
\end{proof}

Since the forgetful functor $U:\Coalg(\Vv)\to \Vv$ is comonadic, it preserves (and, in fact, creates) colimits. Limits in a category of coalgebras are, however, difficult to describe, although it follows from \cref{coalglp} that they exist. In case of a very flat monoidal product, we can apply the explicit description of the cofree coalgebra given above to give also a more explicit description of limits of coalgebras.

\begin{corollary}\label{limitsincoalg}
Let $\Vv$ be a closed monoidal and locally presentable category, with very flat monoidal product.
Let $F:\Zz\to \Coalg(\Vv)$ be a functor, where $\Zz$ is a small category and consider its limit $L=\lim F$ as well as the limit $L'=\lim UF$ where $U:\Coalg(\Vv)\to \Vv$ is the forgetful functor together with the canonical projections $\pi'_Z:L'=\lim UF\to UFZ$ for any $Z$ in $\Zz$.
\begin{enumerate}[(i)]
\item  $L$ can be constructed as the largest subcoalgebra of the cofree coalgebra $T^c(L')$, such that for any object $Z$ in $\Zz$ the composition 
$$U\pi_Z:\xymatrix{UL \ar[r]^-{Ui} & UT^c L' \ar[r]^-p  & L' \ar[r]^-{\pi'_Z} & UFZ}$$
is the underlying $\Vv$-morphism of a a coalgebra morphism $\pi_Z:L\to FZ$, where $i$ is the monomorphism that turns $L$ into a subobject of $T^cL'$ and $p:UT^cL'\to L'$ is the counit of the adjunction $(U,T^c)$. The morphism $\pi_Z$ formed this way is exactly the canonical projection of the limit $L=\lim F$ on the component $FZ$.
\item The family of morphisms 
$$(\pi_{Z_1}\ot \cdots \ot \pi_{Z_n})\circ \Delta_L^n: L\to FZ_1\ot \cdots \ot FZ_n$$
indexed by $n\in \NN$ and objects $Z_1,\ldots,Z_n\in \Zz$, is jointly monic in $\Vv$.
\end{enumerate}
\end{corollary}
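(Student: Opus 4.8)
The plan is to rebuild $L=\lim F$ by hand as a subcoalgebra of the cofree coalgebra $T^c(L')$, imitating the construction in the proof of \cref{cofreeconstruction}, and then to read off both assertions from that construction. Write $U\colon\Coalg(\Vv)\to\Vv$ for the forgetful functor, which is faithful, comonadic, hence a left adjoint to $T^c$ and colimit preserving (\cref{coalglp}); let $p\colon UT^c(L')\to L'$ denote the counit of $(U,T^c)$ at $L'$. The one external fact about $T^c(L')$ we shall use is that, by \cref{cofreeconstruction} applied to $V=L'$, the family $\bigl(p^{\ot k}\circ\Delta^k_{T^c(L')}\bigr)_{k\in\NN}$ is jointly monic in $\Vv$.

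First I would introduce the class $\Dd$ of pairs $(C,c)$, where $C$ is a coalgebra in $\Vv$, $c\colon C\to T^c(L')$ is a coalgebra morphism with $Uc$ a monomorphism in $\Vv$, and for every $Z\in\Zz$ the $\Vv$-morphism $\pi'_Z\circ p\circ Uc\colon UC\to UFZ$ underlies a coalgebra morphism $\psi^Z_C\colon C\to FZ$ (this lift being automatically unique, as $U$ is faithful). Since $\Vv$ is well-powered, and on a fixed $\Vv$-subobject of $UT^c(L')$ both the coalgebra structure for which $c$ is a coalgebra morphism (determined, since $Uc\ot Uc$ is monic by flatness) and the $\psi^Z_C$ are uniquely pinned down, $\Dd$ is a set. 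Next, form $\coprod_\Dd C$ in $\Coalg(\Vv)$, let $\sigma\colon\coprod_\Dd C\to T^c(L')$ and $\tau^Z\colon\coprod_\Dd C\to FZ$ be the coalgebra morphisms induced by the $c$'s respectively the $\psi^Z_C$'s, and take a (strong epi, mono)-factorization $U\sigma=\bar m\circ\bar e$ in $\Vv$ (\cref{epi-mono}), with $\bar e\colon U\coprod_\Dd C\twoheadrightarrow\bar L$ and $\bar m\colon\bar L\hookrightarrow UT^c(L')$. Exactly as in the proof of \cref{cofreeconstruction} (the strong epimorphism $\bar e$ is orthogonal to the monomorphism $\bar m\ot\bar m$, monic by flatness) one obtains a unique coalgebra structure $\Delta_{\bar L}$ on $\bar L$ making $\bar e$ and $\bar m$ coalgebra morphisms, so $\bar m$ underlies a coalgebra morphism $\bar\iota\colon\bar L\to T^c(L')$. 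That $(\bar L,\bar m)\in\Dd$ reduces to checking that $\pi'_Z\circ p\circ\bar m$ underlies a coalgebra morphism $\bar L\to FZ$: one has $(\pi'_Z\circ p\circ\bar m)\circ\bar e=U\tau^Z$ (the two sides agree after precomposition with each coproduct injection, and $U$ preserves coproducts), and then precomposing the coalgebra-morphism identities for $\pi'_Z\circ p\circ\bar m$ with the epimorphism $\bar e$, using that $\tau^Z$ and $\bar e$ are coalgebra morphisms, forces those identities. Finally, for any $(C,c)\in\Dd$ the relation $Uc=\bar m\circ(\bar e\circ\iota_C)$, with $\iota_C$ the coproduct injection, exhibits $\bar e\circ\iota_C\colon C\to\bar L$ as a monic coalgebra morphism compatible with the maps into $T^c(L')$; hence $(\bar L,\bar m)$ is the largest element of $\Dd$.

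It remains to identify $\bar L$, with the coalgebra morphisms $\bar\pi_Z\colon\bar L\to FZ$ underlying $\pi'_Z\circ p\circ\bar m$, as $\lim F$. These $\bar\pi_Z$ form a cone over $F$ since $U$ is faithful and the $\pi'_Z$ form a cone over $UF$. As $\bar\iota$ is a coalgebra morphism, $\Delta^k_{T^c(L')}\circ\bar m=\bar m^{\ot k}\circ\Delta^k_{\bar L}$, so composing the jointly monic family $\bigl(p^{\ot k}\circ\Delta^k_{T^c(L')}\bigr)_k$ with the monomorphism $\bar m$ shows $\bigl((p\bar m)^{\ot k}\circ\Delta^k_{\bar L}\bigr)_k$ is jointly monic; composing once more with $\bigl(\pi'_{Z_1}\ot\cdots\ot\pi'_{Z_k}\bigr)_{Z_1,\dots,Z_k}$ — jointly monic by very flatness and iterated use of \cref{tensorjointlymonic}(2) — and using $\pi'_Z\circ(p\bar m)=U\bar\pi_Z$, one gets that $\bigl((U\bar\pi_{Z_1}\ot\cdots\ot U\bar\pi_{Z_k})\circ\Delta^k_{\bar L}\bigr)_{k,Z_1,\dots,Z_k}$ is jointly monic in $\Vv$; this is assertion (ii), and the case $k=1$ gives that $(\bar\pi_Z)_Z$ is jointly monic in $\Vv$, hence in $\Coalg(\Vv)$ by faithfulness of $U$. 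For the universal property: given a coalgebra $D$ with a cone $g_Z\colon D\to FZ$, let $h\colon UD\to L'$ be induced by $(Ug_Z)_Z$, let $u\colon D\to T^c(L')$ be its adjoint (so $p\circ Uu=h$), and factor $Uu=m''\circ e''$ in $\Vv$; the argument of the previous paragraph produces a coalgebra $(C'',m'')\in\Dd$ with $e''$ a coalgebra morphism and $Uc''=m''$, hence a coalgebra morphism $k\colon C''\to\bar L$ with $\bar m\circ Uk=m''$, and $g:=k\circ e''$ satisfies $\bar\pi_Z\circ g=g_Z$ (checked after applying $U$); uniqueness of $g$ follows from the joint monicity of the $\bar\pi_Z$. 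Thus $\bar L=\lim F$, the $\bar\pi_Z$ are the canonical projections, and taking $i:=\bar\iota$ gives (i), while (ii) is the joint monicity already established.

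The genuinely delicate points are, first, transporting coalgebra structures and coalgebra morphisms across strong-epi quotients taken in $\Vv$ — the same manoeuvres as in \cref{cofreeconstruction}, now carried out with the extra clause imposing compatibility with the diagram $F$ — and, second, checking that $\Dd$ is a set rather than a proper class. One must also be careful to establish the joint monicity of the $\bar\pi_Z$ before, and independently of, the universal property, so as to avoid circularity; this is why assertion (ii) is proved on the way to, rather than after, the identification $\bar L=\lim F$.
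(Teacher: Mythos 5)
Your overall strategy is the paper's own: the published proof of part (i) is only a sketch (``consider the family of all subcoalgebras of $T^cL'$ satisfying the property, take their coproduct, perform a strong epi--mono factorization of the induced morphism into $T^cL'$, and verify the universal property''), and your set $\Dd$, the coproduct, the factorization, the transport of the coalgebra structure along the strong epi, the identification of the largest element, and the derivation of (ii) by composing the jointly monic family $\bigl(p^{\ot k}\circ\Delta^k_{T^c(L')}\bigr)_k$ with the monomorphism $\bar m$ and with the tensor powers of the jointly monic family $(\pi'_Z)_Z$ are exactly the details the authors leave to the reader; your part (ii) coincides with the paper's argument verbatim.

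There is, however, one step that fails as written: the claim that ``the case $k=1$ gives that $(\bar\pi_Z)_Z$ is jointly monic in $\Vv$.'' Joint monicity of the family indexed by all $(k,Z_1,\dots,Z_k)$ does not pass to the subfamily with $k=1$, and the limit projections of coalgebras are in general \emph{not} jointly monic in $\Vv$ --- this is precisely why statement (ii) must involve the iterated comultiplications, and why the antipode verification in \cref{HopfinsSHopf-ref-placeholder} cannot be done by postcomposing with the projections alone. Since you invoke this claim for the uniqueness half of the universal property, that part of your argument has a genuine gap, though a locally repairable one: if $f,f'\colon D\to\bar L$ are \emph{coalgebra} morphisms with $\bar\pi_Z\circ f=\bar\pi_Z\circ f'$ for all $Z$, then $(U\bar\pi_{Z_1}\ot\cdots\ot U\bar\pi_{Z_k})\circ\Delta^k_{\bar L}\circ Uf=(U(\bar\pi_{Z_1}f)\ot\cdots\ot U(\bar\pi_{Z_k}f))\circ\Delta^k_D$ and likewise for $f'$, so the full jointly monic family of (ii) separates them; alternatively, $\bar\iota\circ f$ and $\bar\iota\circ f'$ are coalgebra morphisms into $T^c(L')$ whose composites with $p$ agree (because the $\pi'_Z$ \emph{are} jointly monic in $\Vv$ on $L'$), hence they coincide by the universal property of the cofree coalgebra, and $\bar\iota$ is monic. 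With either repair the proof is complete and faithful to the paper's intended construction.

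\renewcommand{\thefootnote}{\relax}\newcommand{\HopfinsSHopfrefplaceholder}{}
\expandafter\def\csname cref@HopfinsSHopf-ref-placeholder\endcsname{}
\makeatletter\makeatother
% Note to editor: the reference above is to Proposition \ref{HopfinsHopf}.
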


\begin{proof}
\ul{(i)}. This part can be proven by applying the same technique as in the proof of \cref{cofreeconstruction}:  One considers the family $\Ll$ of all sub-coalgebras of $T^cL'$ satisfying a property as in the statement. Next, one constructs the coproduct (in $\Vv$ or directly in $\Coalg(\Vv)$, which gives the same result) of all elements in $\Ll$. Performing a strong epi-mono factorization of the incuded morphism from the coproduct in $T^cL'$, we obtain this way the ``largest subcoalgebra'' $L$ of $T^c L'$ satisfying the property of the statement.\footnote{Alternatively, one can use a similar reasoning as in Remark~\ref{rem:freeVcat}(3), and construct $L$ as an equalizer of parallel morphisms $T^c L'\rightrightarrows T^cU T^c L'$, induced by the unit of the adjunction $(U,T^c)$.} This object can then easily be verified to satisfy the universal property of the limit.

\ul{(ii)}. 
By part (i), we know that there is a monomorphism $i: \lim F\to T^c\lim UF$ in $\Vv$. From the construction of the cofree coalgebra in \cref{cofreeconstruction}, we know that the family of morphisms 
$$\xymatrix{T^c\lim UF\ar[r]^-{\Delta^n} & (T^c\lim UF)^{\ot n} \ar[r]^-{p^{\ot n}} & (\lim UF)^{\ot n}}$$ is jointly monic. Moreover, by general properties of categorical limits, the family of projections $\pi'_Z:\lim UF\to UFZ$ over all objects $Z\in \Zz$ is jointly monic in $\Vv$ as well. Finally, by the very flatness of the monoidal product the last property implies by \cref{tensorjointlymonic} that also the family of morphims 
$$\pi'_{Z_1}\ot\cdots \ot \pi'_{Z_n}:(\lim UF)^{\ot n}\to UFZ_1\ot\cdots\ot UFZ_n,$$ indexed by objects $Z_1,\ldots, Z_n$ in $\Zz$ is jointly monic. Combining the above and since $i:\lim F\to T\lim UF$ is a coalgebra map, we obtain that the family of morphisms
\[
\xymatrix{
\lim F  \ar[d]^-{i} \ar[r]^-{\Delta^n} & \lim F^{\ot n} \ar[rr]^-{\pi_{Z_1}\ot\cdots \ot \pi_{Z_n}} \ar[d]^-{i^{\ot n}}  && UFZ_1\ot\cdots\ot UFZ_n\\
T^c\lim UF \ar[r]^-{\Delta^n} &( T^c\lim UF)^{\ot n} \ar[rr]^-{p^{\ot n}}  && (\lim UF)^{\ot n} \ar[u]_-{\pi'_{Z_1}\ot\cdots \ot \pi'_{Z_n}} 
}
\]
indexed by $n\in \NN$ and objects $Z_1,\ldots,Z_n\in \Zz$, is jointly monic in $\Vv$.
\end{proof}

{
Let us finish this section with a few additional observations on cofree coalgebras, that will be useful for what follows
\begin{corollary}\label{corcofree}
Let $\Vv$ be a closed monoidal and locally presentable category, with very flat monoidal product.
\begin{enumerate}
\item If $\Vv$ is symmetric as monoidal category, then for any $V$ in $\Vv$, there exists an involutive isomorphism of coalgebras $T^c(V)\cong T^c(V)^{cop}$, which is natural in $V$.
\item The forgetful functor $U:\Coalg(V)\textsf{-}\Grph\to \Vv\textsf{-}\Grph$ has a right adjoint.
\end{enumerate}
\end{corollary}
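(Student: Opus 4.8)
The plan is to deduce both items formally, without returning to the explicit model of \cref{cofreeconstruction}: item~(1) from the universal property characterising the cofree coalgebra, and item~(2) from the $2$-functoriality of $(-)\textsf{-}\Grph$ established in \cref{Grphfunctor}.

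For (1) I would first observe that, since $\Vv$ is symmetric, the assignment $(C,\Delta,\epsilon)\mapsto(C,\sigma_{C,C}\circ\Delta,\epsilon)$ defines an \emph{involutive} automorphism $(-)^{cop}$ of $\Coalg(\Vv)$ which leaves underlying objects and morphisms unchanged, so that $U\circ(-)^{cop}=U$ as functors $\Coalg(\Vv)\to\Vv$, where $U$ is the forgetful functor and $T^c$ its right adjoint (\cref{coalglp}). The key step is then to check that $(T^c(V)^{cop},\,p)$, where $p\colon UT^c(V)\to V$ is the counit, now regarded as a morphism out of $UT^c(V)^{cop}=UT^c(V)$, again satisfies the universal property of the cofree coalgebra over $V$: given $\gamma\colon UC\to V$ one factors it through $T^c(V)$ via the coalgebra $C^{cop}$, then transports the resulting coalgebra morphism and its uniqueness back along $(-)^{cop}$ (using $(C^{cop})^{cop}=C$ and that $U$ ignores $(-)^{cop}$). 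Uniqueness of cofree objects then produces a coalgebra isomorphism $\phi_V\colon T^c(V)\to T^c(V)^{cop}$ characterised by $p\circ U\phi_V=p$. Naturality in $V$ and involutivity, i.e.\ $\phi_V^{cop}=\phi_V^{-1}$, then follow by the usual argument that two coalgebra maps into a cofree coalgebra which agree after applying $U$ and post-composing the projection must coincide: for naturality one compares $\phi_W\circ T^c(f)$ with $T^c(f)^{cop}\circ\phi_V$ using $p_W\circ UT^c(f)=f\circ p_V$; for involutivity one notes that $\phi_V^{cop}$ satisfies the same defining relation $p\circ U(\phi_V^{cop})=p$.

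For (2) I would simply apply the final assertion of \cref{Grphfunctor} to the adjunction given by the forgetful functor $\Coalg(\Vv)\to\Vv$ and its right adjoint $T^c$ (which exists by \cref{cofreeconstruction}): this yields an adjunction between the induced functors $\Coalg(\Vv)\textsf{-}\Grph\to\Vv\textsf{-}\Grph$ and $\Vv\textsf{-}\Grph\to\Coalg(\Vv)\textsf{-}\Grph$, and the first of these is precisely the forgetful functor $U$ of the statement, since $\Grph$ leaves object-sets unchanged and acts componentwise on Hom-objects. The only point needing a line of care — bookkeeping rather than a real obstacle — is, in (1), that $(-)^{cop}$ is genuinely an involution (which uses the full symmetry, not merely a braiding) and that the projection $p$ is literally unchanged when passing to $T^c(V)^{cop}$, so that the universal property transfers verbatim; once this is granted, both proofs are purely formal.
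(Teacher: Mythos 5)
Your proposal is correct and follows essentially the same route as the paper: part (1) rests on the observation that the counit $p$ may equally be regarded as a projection out of $T^c(V)^{cop}$, so the universal property of the cofree coalgebra forces the comparison map to be an involutive isomorphism (the paper constructs $t\colon T^c(V)^{cop}\to T^c(V)$ directly and checks $p\circ t\circ t=p$, whereas you repackage this as "$T^c(V)^{cop}$ is itself cofree over $V$" and invoke uniqueness of universal objects — the same formal argument); part (2) is verbatim the paper's proof, applying \cref{Grphfunctor} to the forgetful–cofree adjunction. Your additional verification of naturality, which the paper asserts without spelling out, is a welcome extra but does not change the approach.
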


\begin{proof}
\ul{(1)}. We know by definition that the projection $p:T^c(V)\to V$ is universal. Since the same $p$ can be viewed as a $\Vv$-morphism $T^c(V)^{cop}\to V$, we find by the universal property that there exists a unique coalgebra morphism $t:T^c(V)^{cop}\to T^c(V)$ such that $p\circ t=p$ in $\Vv$. Since then also $p\circ t\circ t = p$, the universal property of the cofree coalgebra implies that $t\circ t$ is the identity as required.\\
\ul{(2)}. One just applies \cref{Grphfunctor} to the forgetful-cofree adjunction between $\Vv$ and $\Coalg(\Vv)$.
\end{proof}
}

\section{Existence of free and cofree Hopf categories}

\subsection{Semi-Hopf $\Vv$-categories}

Recall from \cite{BCV} that a semi-Hopf category over a symmetric monoidal category $\Vv$ (where we denote the symmetry by $\sigma$) is nothing else than a $\Coalg(\Vv)$-category. Unwinding this definition, we obtain the following more explicit definition.

\begin{definition}
A {\em semi-Hopf $\Vv$-category} $A$ consists of a collection of objects $A^0$ and for all objects $x,y\in A^0$ we have a (coassociative, counital) coalgebra $(A_{xy},\delta_{xy}: A_{xy}\to A_{xy} \ot A_{xy},\epsilon_{xy}: A_{xy}\to I)$ in $\Vv$ together with the following morphisms in $\Vv$, for all $x,y,z\in A^0$:
\begin{eqnarray*}
m_{xyz}: A_{xy}\ot A_{yz} \to A_{xz} \qquad j_x: I \to A_{xx}
\end{eqnarray*}
which turn $A$ into a $\Vv$-category and such that moreover the following axioms hold:
\[
\xymatrix{A_{xy}\ot A_{yz} \ar[rr]^-{\delta_{xy}\ot \delta_{xy}} \ar[dd]_{m_{xyz}} & & A_{xy}\ot A_{xy}\ot A_{yz}\ot A_{yz} \ar[d]^{A_{xy}\ot \sigma \ot A_{yz}} \\
& & A_{xy}\ot A_{yz} \ot A_{xy}\ot A_{yz} \ar[d]^{m_ {xyz} \ot m_{xyz}} \\
A_{xz} \ar[rr]_{\delta_{xz}} & & A_{xz}\ot A_{xz} } 
\]
\[
\xymatrix{I \ar[r]^{\simeq} \ar[d]_{j_x} & I\ot I \ar[d]^{j_x\ot j_x} &A_{xy}\ot A_{yz} \ar[r]^-{\epsilon_{xy}\ot \epsilon_{xy}} \ar[d]_{m_{xyz}} & I\ot I \ar[d]^{\simeq} &I \ar[d]_{j_x} \ar[rd]^= & \\
A_{xx} \ar[r]_-{\delta_{xx}} & A_{xx}\ot A_{xx} &A_{xz} \ar[r]_{\epsilon_{xz}} & I &A_{xx} \ar[r]_{\epsilon_{xx}} & I } 
\]
A morphism between two semi-Hopf categories, called a semi-Hopf $\Vv$-functor is a morphism $\Coalg(\Vv)$-graphs that is at the same time a $\Vv$-functor.
Semi-Hopf $\Vv$-categories form a category which we denote by $\VsHopf$.
\end{definition}

\begin{examples}
Suppose that $\Vv=\Set$, or any Cartesian category viewed as a monoidal category, then $\Coalg(\Vv)=\Vv$, as any object $C$ of $\Vv$ can be endowed in a unique way with a comultiplication $C\to C\times C$ by means of the diagonal map. It then follows that a semi-Hopf $\Vv$-category is just a $\Vv$-category.

A semi-Hopf $\Vv$-category $\ul A$ (for any symmetric monoidal category $\Vv$) with $A^0$ containing just one element is nothing else than a Hopf algebra in $\Vv$. More generally, if $A^0$ is finite, then $\coprod_{x,y\in A^0} A_{xy}$ is a weak Hopf algebra in $\Vv$, see \cite[Proposition 6.1]{BCV} for a proof in the $k$-linear case.
\end{examples}

Combining the results from the previous sections, we then immediately arrive at the following.

\begin{proposition}\label{sHopflp}
Let $\Vv$ be a closed symmetric monoidal and locally presentable category. Then the following statements hold.
\begin{enumerate}[(i)]
\item The category $\VsHopf$ of semi-Hopf $\Vv$-categories is locally presentable.
\item The forgetful functor $\VsHopf\to \Coalg(\Vv)$-${\sf Grph}$ is monadic. In particular, the forgetful functor $\VsHopf\to\Coalg(\Vv)$-${\sf Grph}$ has a left adjoint, which creates the ``free semi-Hopf category'' over a $\Coalg(\Vv)$-graph.
\item The forgetful functor $\VsHopf\to \VCat$ is comonadic. In particular, the forgetful functor $\VsHopf\to\VCat$ has a right adjoint, which creates the ``cofree semi-Hopf category'' over a $\Vv$-category.
\end{enumerate}
\end{proposition}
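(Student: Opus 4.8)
The plan is to exploit the identification $\VsHopf=\Coalg(\Vv)$-$\Cat$ recalled above and feed it into the machinery already established. Since $\Vv$ is closed symmetric monoidal and locally presentable, \cref{coalglp} tells us that $\Coalg(\Vv)$ is again closed symmetric monoidal and locally presentable. Part (i) is then immediate from \cref{Vcatlp}(ii) applied with $\Ww=\Coalg(\Vv)$, and part (ii) from \cref{Vcatlp}(i) applied with the same $\Ww$: its hypotheses hold because $\Coalg(\Vv)$ is cocomplete and, being closed monoidal and locally presentable, has tensoring endofunctors that preserve colimits (cf.\ \cref{rem:freeVcat}(1)). This yields at once the monadicity of the forgetful functor $\VsHopf\to\Coalg(\Vv)$-$\Grph$, hence the free semi-Hopf category functor; and the target $\Coalg(\Vv)$-$\Grph$ is itself locally presentable, being a category of graphs over the locally presentable category $\Coalg(\Vv)$.

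For (iii) I would first produce the right adjoint and then establish comonadicity. By \cref{coalglp}(ii) the forgetful functor $U\colon\Coalg(\Vv)\to\Vv$ is strong symmetric monoidal with right adjoint $T^c$; since the left adjoint $U$ is strong monoidal, the adjunction $U\dashv T^c$ is a lax monoidal adjunction (the mates of the coherence maps of $U$ make $T^c$ lax monoidal and the unit and counit monoidal). Applying the $2$-functor of \cref{Catfunctor} to this adjunction yields an adjunction $\Coalg(\Vv)$-$\Cat\rightleftarrows\VCat$ whose left adjoint is precisely the forgetful functor $\VsHopf\to\VCat$, so that its right adjoint $T^c$-$\Cat$ is the desired cofree semi-Hopf category functor. (Alternatively, the right adjoint can be obtained from \cref{adjointlp}(i): colimits in $\Coalg(\Vv)$ are created by $U$, hence the analogous statement holds at the level of $\Coalg(\Vv)$-graphs and then of $\Coalg(\Vv)$-categories, so that $\VsHopf\to\VCat$ is a colimit-preserving functor between locally presentable categories.)

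It remains to prove comonadicity, for which I would invoke the comonadicity theorem: it suffices that $\VsHopf\to\VCat$ reflect isomorphisms and create equalizers of parallel pairs whose image in $\VCat$ admits a split equalizer. Reflection of isomorphisms is clear: a semi-Hopf functor whose underlying $\Vv$-functor is invertible has hom-components that are bijective morphisms of coalgebras, hence isomorphisms in $\Coalg(\Vv)$, and the inverse $\Vv$-functor is then automatically a semi-Hopf functor. For the equalizers, take such a pair $\ul f,\ul g\colon\ul A\rightrightarrows\ul B$ and form the split equalizer $\ul e\colon\ul E\to\ul A$ in $\VCat$. As $\VCat\to\VGrph$ is monadic it creates this limit, and limits of $\Vv$-graphs are computed componentwise (\cref{limVGrph}); since a split equalizer is an absolute limit, it is preserved by passage to objects and to hom-components, so that $E^0$ and each $E_{xy}$ are split equalizers in $\Set$ and in $\Vv$ respectively. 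By \cref{coalglp}(ii) the functor $U\colon\Coalg(\Vv)\to\Vv$ is comonadic, so each $E_{xy}$ lifts uniquely to a coalgebra making $e_{xy}$ a coalgebra morphism and an equalizer in $\Coalg(\Vv)$; one then checks that these coalgebra structures are compatible with the composition and unit morphisms of $\ul E$, so that $\ul E$ becomes a semi-Hopf category and $\ul e$ a semi-Hopf functor. This compatibility follows from the uniqueness of comonadic lifts, both sides of each defining diagram being coalgebra morphisms with the same underlying $\Vv$-morphism. A routine universal-property argument then identifies $\ul e$ with the equalizer of $\ul f,\ul g$ in $\VsHopf$, and this equalizer is preserved by the forgetful functor by construction.

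The step I expect to be the main obstacle is precisely this last one: transporting the hom-wise comonadic lifts into an honest $\Coalg(\Vv)$-category structure on the equalizer, compatible with composition and units, and more broadly keeping the enriched bookkeeping --- varying object-sets, componentwise limits, which structure is being lifted where --- under control. None of it requires a genuinely new idea; everything else is an assembly of results already available in the text.
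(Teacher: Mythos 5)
Your proposal is correct and follows essentially the same route as the paper: parts (i)--(ii) are obtained exactly as in the text by applying \cref{coalglp} and then \cref{Vcatlp} to $\Coalg(\Vv)$, and for (iii) the adjunction is likewise produced by feeding the lax monoidal adjunction $U\dashv T^c$ into \cref{Catfunctor}. The only difference is one of detail: where the paper disposes of comonadicity in one line via Beck's theorem, you carry out the verification (reflection of isomorphisms and creation of equalizers of split pairs, componentwise via the comonadicity of $\Coalg(\Vv)\to\Vv$) explicitly, which is a sound and indeed more complete account of the step the paper leaves to the reader.
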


\begin{proof}
{
\ul{(i)-(ii)}. Since $\Vv$ is a closed symmetric monoidal and locally presentable category, so is $\Coalg(\Vv)$ by \cref{coalglp}. Therefore, we can apply \cref{Vcatlp} to conclude that $\VsHopf=\Coalg(\Vv)-\Cat$ is locally presentable and the forgetful functor $\VsHopf\to \Coalg(\Vv)$-${\sf Grph}$ is monadic.}

\ul{(iii)}. The desired forgetful-cofree adjunction follows by combining \cref{coalglp} with \cref{Catfunctor}. The comonadicity follows directly from Becks precise tripleablity theorem, as  one can easily verify that the forgetful functor $\VsHopf\to \VCat$ preserves colimits and reflects isomorphisms. 
\end{proof}

\begin{remark}
The construction of the free semi-Hopf category over a $\Coalg(\Vv)$-graph $\ul A$ follows directly from the general construction as recalled in Remark~\ref{rem:freeVcat}(2). Moreover, as this construction is based on coproducts  of tensor products in $\Coalg(\Vv)$ and since the forgetful functor $\Coalg(\Vv)\to \Vv$  is monoidal and creates coproducts (see \cref{coalglp}), the whole construction can be performed simply in $\Vv$. In paricular, we find that the free semi-Hopf category has the same set of objects $A^0$ as the given $\Coalg(\Vv)$-graph $\ul A$, and for each $x,y\in A^0$, the Hom-object from $x$ to $y$ in the free semi-Hopf category is computed as the following coproduct in $\Vv$:
$$\coprod_{n\in \NN, z_1,\ldots, z_n \in A^0} A_{x,z_1}\ot A_{z_1,z_2}\ot \cdots \ot A_{z_n,y}$$
(with an additional component $I$ in case $x=y$), endowed with the natural comultiplication obtained from the comultiplications of the indvidual $A_{z,z'}$, and a composition induced by the associators of the monoidal product in $\Vv$.

The construction of the cofree semi-Hopf category over a given $\Vv$-category $\ul A$ can in a similar way be deduced from the construction of cofree coalgebras. Again, the set of objects will be given by $A^0$, the set of objects of the given $\Vv$-category $\ul A$. Then, one considers for any $x,y\in A^0$ the cofree coalgebra $T^c(A_{x,y})$ over the $\Vv$-object $A_{x,y}$ (which can be constructed explicitly in case the monoidal product is very flat following the construction from \cref{cofreeconstruction}). The composition (or multiplication) in the cofree semi-Hopf category is obtained from the universal property of the cofree coalgebra  (see paragraph after \cref{coalglp}) leading to the following commutative diagram.
\[\xymatrix{
T^c(A_{xy})\ot T^c(A_{yz}) \ar[rr]^-{p_{xy}\ot p_{yz}} \ar@{.>}[d] && A_{xy}\ot A_{yz} \ar[d]^-{m_{xyz}} \\
T^c(A_{xz}) \ar[rr]^-{p_{xz}} && A_{xz}
}\]
Finally, let us remark that semi-Hopf categories can also be viewed as coalgebra objects in a monoidal category of $\Vv$-categories with a fixed set of objects $X$. For any two $\Vv$-categories $\ul A=(X,A^1)$ and $\ul B=(X,B^1)$ over this set of objects, the monoidal product is given by $\ul A\bul \ul B=(X,A^1\bul B^1)$, where
$$(A\bul B)_{xy}=A_{xy}\ot B_{xy}.$$
See \cite{BCV} for this point of view on semi-Hopf categories and \cite{Bohm:polyads} for a bicategorical point of view on this which allows to vary the set of objects. Hence, the above description of the cofree semi-Hopf category over a $\Vv$-category can also be obtained directly from \cref{cofreeconstruction} applied to the monoidal category of $\Vv$-categories with a fixed set of objects.
\end{remark}

\begin{corollary}\label{familieslimcolim}
{ Let $\Vv$ be a symmetric closed monoidal and locally presentable category.}
Consider a small category $\Zz$ and a functor $F:\Zz\to\VsHopf$. 
\begin{enumerate}[(i)]
\item 
{ Consider the colimit $\colim F$ in $\VsHopf$ together with the canonical morphisms $\gamma_Z:FZ\to \colim F$.}
The set of objects of the colimit $\colim F$ is computed by taking the colimit of the functor $F$ composed with the forgetful functor to $\Set$.
For each pair of ojbects $x,y\in (\colim F)^0$ the following family of morphisms in $\Vv$
$$
\xymatrix{
(FZ_1)_{x_{11},x_{12}}\ot (FZ_2)_{x_{21},x_{22}} \ot \cdots \ot (FZ_n)_{x_{n1},x_{n2}} \ar[d]^-{(\gamma_{Z_1})_{x_{11}x_{12}}\ot (\gamma_{Z_2})_{x_{21}x_{22}}\ot\cdots \ot (\gamma_{Z_n})_{x_{n1}x_{n2}}} \\
(\colim F)_{xx_1} \ot (\colim F)_{x_1x_2} \ot \cdots \ot (\colim F)_{x_{n-1}y}  \ar[d]^-{m_{xx_1x_2\cdots x_{n-1}y}} \\ 
(\colim F)_{xy}
}
$$
varying over all $n\in \NN_0$, $Z_1,\ldots,Z_n\in \Zz$, $x_{i1},x_{i2}\in (UFZ_1)^0$ such that $\gamma^0_{Z_i}(x_{i2})=\gamma^0_{Z_{i+1}}(x_{i+1,1})=x_i$, $\gamma^0_{Z_1}(x_{11})=x$ and $\gamma^0_{Z_n}(x_{n2})=y$ is jointly epic (in $\Vv$). 
\item 
{ Consider the limit $\colim F$ in $\VsHopf$ together with the canonical morphisms $\pi_Z:\lim F\to FZ$.}
The set of objects of the limit $\lim F$ is computed by taking the limit of the functor $F$ composed with the forgetful functor to $\Set$.
If $\Vv$ has moreover a very flat monoidal product, then the family of morphisms
\[
\xymatrix{
(\lim F)_{x,y} \ar[rr]^-{\delta^n_{x,y}} && ((\lim F)_{x,y})^{\ot n} \ar[rrr]^-{\pi^1_{xy}\ot\cdots\ot \pi^n_{xy}} &&& (FZ_1)_{x_1,y_1} \ot\cdots\ot (FZ_n)_{x_n,y_n} 
}
\]
indexed by $n\in \NN$ and $Z_1,\ldots,Z_n$ in $\Zz$, is jointly monic in $\Vv$. Here for any tuple of objects $Z_1,\ldots,Z_n$ in $\Zz$, we denoted $\pi^0_{Z_i}(x)=x_i$ and $\pi^0_{Z_i}(y)=y_i$ and $(\pi_{x,y})_{Z_i}=\pi^i_{xy}$.
\end{enumerate}
\end{corollary}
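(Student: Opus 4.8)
The plan is to exploit the identification $\VsHopf=\Coalg(\Vv)\textsf{-}\Cat$ and to transport to this base category the explicit description of colimits in $\VCat$ from \cref{rem:freeVcat}(3), the componentwise description of limits in a graph category from \cref{limVGrph}, and the description of limits of coalgebras from \cref{limitsincoalg}. Throughout one uses that, by \cref{coalglp}, $\Coalg(\Vv)$ is closed symmetric monoidal and locally presentable, hence cocomplete with all endofunctors of the form $A\ot-$ and $-\ot A$ preserving colimits; in particular it has an initial object preserved under tensoring and a terminal object, and the forgetful functor $U\colon\Coalg(\Vv)\to\Vv$ is strict monoidal and, admitting a right adjoint $T^c$ by \cref{coalglp}, preserves all colimits, in particular coproducts and epimorphisms, hence jointly epic families. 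Consequently \cref{Uvcattosetpreserves} applied with base category $\Coalg(\Vv)$ shows that the forgetful functor $\VsHopf\to\Set$ preserves all limits and colimits, which gives at once the asserted descriptions of the sets of objects of $\colim F$ and of $\lim F$.

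For part \textbf{(i)}, I would apply the colimit construction of \cref{rem:freeVcat}(3) to the monoidal category $\Coalg(\Vv)$ in place of $\Vv$, which is legitimate by \cref{coalglp} and \cref{Vcatlp}(i). For each $x,y\in(\colim F)^0$ this yields a jointly epic family in $\Coalg(\Vv)$ whose members are the composites of $(\gamma_{Z_1})_{x_{11}x_{12}}\ot\cdots\ot(\gamma_{Z_n})_{x_{n1}x_{n2}}$ with the iterated multiplication of the colimit, the indexing being exactly as in the statement. Applying the strict monoidal functor $U\colon\Coalg(\Vv)\to\Vv$ carries tensor products to tensor products and multiplication morphisms to multiplication morphisms, so it sends these composites precisely to the $\Vv$-morphisms displayed in the statement; since $U$ moreover preserves jointly epic families, part (i) follows.

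For part \textbf{(ii)}, I would first pin down the underlying $\Coalg(\Vv)$-graph of $\lim F$. The forgetful functor $\VsHopf\to\Coalg(\Vv)\textsf{-}\Grph$ is monadic by \cref{sHopflp}(ii), hence creates limits, and limits in $\Coalg(\Vv)\textsf{-}\Grph$ are computed componentwise by \cref{limVGrph} applied with base $\Coalg(\Vv)$. Therefore, for $x,y\in(\lim F)^0$, the Hom-coalgebra $(\lim F)_{x,y}$ is the limit in $\Coalg(\Vv)$ of the functor $Z\mapsto(FZ)_{\pi^0_Z(x),\pi^0_Z(y)}$, its canonical projections are the $\pi^i_{xy}=(\pi_{x,y})_{Z_i}$, and its comultiplication is $\delta^n_{x,y}$. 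Since $\Vv$ is now assumed to have a very flat monoidal product, \cref{limitsincoalg}(ii) applies to this limit of coalgebras in $\Vv$ and states exactly that the family $(\pi^1_{xy}\ot\cdots\ot\pi^n_{xy})\circ\delta^n_{x,y}$, indexed by $n\in\NN$ and $Z_1,\dots,Z_n\in\Zz$, is jointly monic in $\Vv$, which is the assertion.

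The step I expect to need the most care is the bookkeeping that identifies the coprojections $\gamma_Z$ and projections $\pi_Z$ of $\colim F$ and $\lim F$ in $\VsHopf$ with the ones furnished by the constructions above. For colimits this reduces to the facts that the construction of \cref{rem:freeVcat}(3) over $\Coalg(\Vv)$ \emph{is} the colimit in $\VsHopf$ and that its underlying $\Vv$-datum is obtained by applying the colimit-preserving strict monoidal functor $U$ componentwise; for limits it reduces to the fact that the monadic forgetful functor to $\Coalg(\Vv)\textsf{-}\Grph$ transports the limit cone of $\lim F$ to the componentwise limit cone of \cref{limVGrph}, whose components are by construction the limit cones of the coalgebra limits $(\lim F)_{x,y}$. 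Granting these identifications, parts (i) and (ii) are precisely \cref{rem:freeVcat}(3) and \cref{limitsincoalg}(ii) read through the relevant forgetful functors.
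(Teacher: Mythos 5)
Your proposal is correct and follows essentially the same route as the paper: part (i) is reduced to the explicit colimit description of Remark~\ref{rem:freeVcat}(3) and part (ii) to \cref{limVGrph} together with \cref{limitsincoalg}, after using \cref{sHopflp} to transfer the (co)limit computation to the appropriate underlying category. The only cosmetic difference is in (i), where the paper applies Remark~\ref{rem:freeVcat}(3) over the base $\Vv$ after computing the colimit in $\VCat$ via comonadicity, while you apply it over the base $\Coalg(\Vv)$ and then push the jointly epic family down along the colimit-preserving strict monoidal forgetful functor $U:\Coalg(\Vv)\to\Vv$; both yield the same family.
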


\begin{proof}
{\ul (i)}. From \cref{sHopflp}(iii) we know that the colimit $\colim F$ can be computed in $\VCat$. The first part of the statement then follows from the fact (see \cref{Uvcattosetpreserves}) that the forgetful functor from $\VCat$ to $\Set$, being a left adjoint, preserves colimits. 
As explained in Remark \ref{rem:freeVcat}(3), we moreover find on each component of $\colim F$, the family of morphisms in $\Vv$ from the statement is jointly epic. 

{\ul (ii)}. From \cref{sHopflp}(ii) it follows that the limit $\lim F$ can be computed in $\Coalg(\Vv)$-${\sf Grph}$ and by \cref{Upreserves} we know that the forgetful functor from $\Coalg(\Vv)$-${\sf Grph}$ to $\Set$, being a right adjoint, preserves limits. This already shows the first part of the statement. For the second part, recall from \cref{limVGrph} how a limit in $\Coalg(\Vv)$-${\sf Grph}$ is build up from the limit of the underlying sets and limits in $\Coalg(\Vv)$. By \cref{limitsincoalg}, we know how limits in the category of $\Vv$-coalgebras can be constructed explicitly and that there is a jointly monic family of morphisms in $\Vv$ as in the statement.
\end{proof}

Recall that an op-monoidal functor $F:\Vv\to \Ww$ lifts to a functor $\Coalg(F):\Coalg(\Vv)\to \Coalg(\Ww)$. Moreover, if $\Vv$ and $\Ww$ are symmetric monoidal, and $F$ is symmetric op-monoidal, then $\Coalg(F)$ is a symmetric monoidal functor as well. Applying this in particular to the strong monoidal functor $(id,\sigma):\Vv\to \Vv^{rev}$, we obtain a strong monoidal functor $\Coalg(\Vv)\to \Coalg(\Vv)^{rev}$, since indeed $\Coalg(\Vv^{rev})=\Coalg(\Vv)^{rev}$ as monoidal categories. Moreover, using the symmetry once more, we also have a strong monoidal functor $\Coalg(\Vv)^{rev}\to \Coalg(\Vv)$. Combing these, we obtain a strong monoidal and involutive autofunctor $\Coalg(\Vv)\to \Coalg(\Vv)$, which acts as identity on morphisms.
The image of a coalgebra  $(C,\Delta,\epsilon)$ under this functor is called the {\em co-opposite} coalgebra of $C$ and is defined as the coalgebra $C^{cop}$ having the same underlying object and same counit, but whose comultiplication is given by 
$$\xymatrix{C \ar[rr]^-\Delta && C\ot C \ar[rr]^-\sigma && C\ot C}.$$

By applying the above to base-change from \cref{Catfunctor} in combination with \cref{oppositecat} and \cref{remoppositecat}, we immediately obtain the following.

\begin{proposition}\label{opcop}
Let $\Vv$ be a symmetric monoidal category, then we have the following involutive autofunctors
\begin{eqnarray*}
(-)^{op}&:&\VsHopf\to \VsHopf\\
(-)^{cop}&:&\VsHopf\to \VsHopf\\
(-)^{op,cop}&:&\VsHopf\to \VsHopf
\end{eqnarray*}
which send a semi-Hopf $\Vv$-category respectively to its opposite, (locally) coopposite, and opposite-coopposite semi-Hopf $\Vv$-category. The functors $(-)^{op}$ and $(-)^{op,cop}$ send morphisms to their opposite, and the functor $(-)^{cop}$ acts as identity on morphisms.
\end{proposition}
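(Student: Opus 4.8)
The plan is to obtain all three autofunctors by specialising the base-change $2$-functor of \cref{Catfunctor} and the opposite-category isomorphisms of \cref{oppositecat} and \cref{remoppositecat} to the base category $\Coalg(\Vv)$, via the identification $\VsHopf=\Coalg(\Vv)$-$\Cat$. Recall that $\Coalg(\Vv)$ is symmetric (hence braided) monoidal since $\Vv$ is, so each of those results applies verbatim with $\Vv$ replaced by $\Coalg(\Vv)$.

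\emph{The functor $(-)^{cop}$.} I would apply \cref{Catfunctor} to the strong monoidal autofunctor $\Coalg(\Vv)\to\Coalg(\Vv)$, $C\mapsto C^{cop}$, constructed in the paragraph preceding the statement; it is involutive and acts as the identity on morphisms. As \cref{Catfunctor} provides a genuine $2$-functor $\LMonCat\to\Cat$ (so it sends identities to identities and respects composition of $1$-cells), the induced autofunctor of $\Coalg(\Vv)$-$\Cat=\VsHopf$ is again involutive and again leaves the underlying families of $\Vv$-morphisms untouched. Concretely it replaces each Hom-coalgebra $A_{xy}$ by $A_{xy}^{cop}$ and keeps the composition and unit morphisms; no separate verification of the semi-Hopf axioms is needed, since these are exactly the axioms of a $\Coalg(\Vv)$-category and base change along a lax monoidal functor respects them by \cref{Catfunctor}.

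\emph{The functor $(-)^{op}$.} Here I would invoke \cref{remoppositecat} with base category $\Coalg(\Vv)$. This produces an involutive isomorphism $(-)^{op}:\Coalg(\Vv)$-$\Cat\to\Coalg(\Vv)$-$\Cat$, i.e. $(-)^{op}:\VsHopf\to\VsHopf$, whose underlying $\Coalg(\Vv)$-graph is the opposite graph (so $A^{op}_{xy}=A_{yx}$ with unchanged coalgebra structure) and whose composition is twisted by the symmetry of $\Coalg(\Vv)$ — which on underlying objects is the symmetry $\sigma$ of $\Vv$ — exactly as displayed in \cref{remoppositecat}. On morphisms it acts by $f^{op}_{xy}=f_{yx}$, and its involutivity follows as in \cref{oppositecat} from $(\Vv^{rev})^{rev}=\Vv$ together with $\Coalg(\Vv^{rev})=\Coalg(\Vv)^{rev}$ as monoidal categories.

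\emph{The functor $(-)^{op,cop}$ and the main point.} I would simply put $(-)^{op,cop}:=(-)^{op}\circ(-)^{cop}$. The one thing that genuinely requires checking — and the main, if modest, obstacle — is that $(-)^{op}$ and $(-)^{cop}$ commute, equivalently that this composite is again involutive. This reduces to inspecting a single Hom-object: co-opposition only alters the comultiplications $\delta_{xy}$, whereas the opposite construction only permutes indices and twists the composition morphisms by $\sigma$, and a direct computation (using $\sigma\circ\sigma=\mathrm{id}$) shows the two operations are independent and that applying $(-)^{op,cop}$ twice returns the original data. Granting this, $(-)^{op,cop}$ is involutive, sends a semi-Hopf $\Vv$-category to the one whose Hom-coalgebras are the $A_{yx}^{cop}$ with opposite-twisted composition, and acts on morphisms by $f\mapsto f^{op}$, since $(-)^{cop}$ leaves morphisms unchanged. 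This establishes all the claims.
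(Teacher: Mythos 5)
Your proposal is correct and follows essentially the same route as the paper, which also obtains $(-)^{cop}$ by applying the base-change $2$-functor of \cref{Catfunctor} to the co-opposite autofunctor of $\Coalg(\Vv)$, obtains $(-)^{op}$ from \cref{oppositecat} and \cref{remoppositecat} with base category $\Coalg(\Vv)$, and combines them for $(-)^{op,cop}$. Your explicit check that $(-)^{op}$ and $(-)^{cop}$ commute (hence that the composite is involutive) is a small detail the paper leaves implicit, but it does not change the argument.
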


\subsection{Hopf $\Vv$-categories}\selabel{main}

Recall from \cite{BCV} the definition of a Hopf $\Vv$-category.

\begin{definition}
An {\em antipode} for a semi-Hopf $\Vv$-category $\ul A$ is a collection of $\Vv$-morphisms $S_{x,y}:A_{xy}\to A_{yx}$ satisfying the following axiom for all $x,y\in A^0$.
\[
\xymatrix{
& A_{x,y}\ot A_{x,y} \ar[r]^-{id\ot S_{x,y}} & A_{x,y}\ot A_{y,x} \ar[r]^-{m_{xyx}} & A_{xx}\\
A_{x,y} \ar[rr]^-{\epsilon_{x,y}} \ar[ur]^-{\delta_{xy}} \ar[dr]_-{\delta_{xy}}
&& I \ar[ur]_-{j_x} \ar[dr]^-{j_y}\\
& A_{x,y}\ot A_{x,y} \ar[r]^-{S_{x,y}\ot id} & A_{y,x}\ot A_{x,y} \ar[r]^-{m_{yxy}} & A_{yy}\\
}
\]
A semi-Hopf $\Vv$-category that admits an antipode is called a {\em Hopf $\Vv$-category}. We denote by $\VHopf$ the category of all Hopf $\Vv$-categories with semi-Hopf $\Vv$-functors between them.
\end{definition}

{ 
Recall from \cite[Proposition 3.10]{BCV} that any morphism of (semi-)Hopf categories automatically preserves the antipode. Consequently,  an antipode for a semi-Hopf category is unique whenever it exists. Moreover, by \cite[Theorem 3.7]{BCV}, the antipode defines an ``identity on objects'' morphism of semi-Hopf categories $S:\ul A\to \ul A^{op,cop}$. The latter means more precisely that $S:A\to A$ is an anti $\Vv$-category morphism (i.e.\ a contravariant $\Vv$-functor) and for each pair of objects $x,y\in A^0$, $S_{x,y}: A_{x,y}\to A_{y,x}$ is an anti-coalgebra morphism.
}

The following key-result of this paper shows that in favorable cases, limits and colimits of Hopf categories can be computed in the category of semi-Hopf categories.  
As a diagrammatic proof would be unreadable, with diagrams that don't fit on a page, we present the proof of this theorem by means of Sweedler notation on generalized elements. Let us explain this notation here. Consider a coalgebra $(C,\delta,\epsilon)$ in a symmetric monoidal category $\Vv$. Recall that a generalized element of $C$ is a pair $(A,a)$, where $A$ is any object of $\Vv$ and $a:A\to C$ is any morphism. We will then denote for any other map $f:C\to V$ the composition $f\circ a: A\to V$ suggestively as $f(a)$, expressing that (composition with) $f$ sends generalized elements of $C$ to generalized elements of $V$. Generalized Sweedler notation now involves writing for any $n\in \NN$, the composition $\delta^n\circ a=\delta^n(a):A\to C^{\ot n}$ as a formal tensor product:
$$\delta^n_{x,y}\circ a=a_{(1)}\ot\cdots\ot a_{(n)}:A\to C^{\ot n}.$$ 
If we now compose this map with a morphism of the form $f_1\ot \cdots \ot f_n:C^{\ot n}\to V_1\ot\cdots \ot V_n$, with $f_i:C\to V_i$ some morphisms in $\Vv$, we will denote the resulting map as
$$ f_1(a_{(1)})\ot \cdots \ot f_n(a_{(n)}) : A\to V_1\ot\cdots \ot V_n.$$
Furthermore, if we postcompose $a_{(1)}\ot\cdots\ot a_{(n)}$ with a combination of symmetry morphisms, we will denote this by simply permuting the corresponding indices in the Sweedler notation. For example, postcomposing $\delta\circ a=\delta(a)$ with the symmetry map $\beta_{C,C}:C\ot C\to C\ot C$ will be noted as
$$\beta_{C,C}\circ \delta\circ a = a_{(2)}\ot a_{(1)} : A\to C\ot C.$$
Since without loss of generality, we may assume that the family of morphisms $a$ is jointly epic in $\Vv$, to verify any equality in $\Vv$, it is sufficient to check it pre-composed with an arbitrary $a$ and use Sweedler notation to check the required equality on (generalized) elements, as we would do in the case of $\Vv=\Vect_k$.

\begin{proposition}\label{HopfinsHopf}
\begin{enumerate}
\item Let $\Vv$ be a symmetric monoidal and locally presentable category. Then $\VHopf$ is a cocomplete full subcategory of $\VsHopf$.
\item Let $\Vv$ be a symmetric monoidal, locally presentable category with very flat monoidal product. Then $\VHopf$ is a complete full subcategory of $\VsHopf$.
\end{enumerate}
\end{proposition}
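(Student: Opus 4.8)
The plan is to treat both parts by one device: after disposing of fullness, form the colimit (resp.\ limit) in $\VsHopf$, equip it with an antipode produced from the universal property of that (co)limit, and then check the antipode axiom on the explicit jointly epic (resp.\ jointly monic) family supplied by \cref{familieslimcolim}. Fullness of $\VHopf$ in $\VsHopf$ is immediate from \cite[Proposition 3.10]{BCV}: a semi-Hopf $\Vv$-functor between Hopf $\Vv$-categories automatically preserves the antipode, so morphisms of Hopf $\Vv$-categories are exactly morphisms of the underlying semi-Hopf $\Vv$-categories.

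For part (1), let $F\colon\Zz\to\VHopf$ be a small diagram and let $\ul B=\colim F$ be its colimit in $\VsHopf$ (which exists; cf.\ \cref{sHopflp}), with cocone $\gamma_Z\colon FZ\to\ul B$. By \cite[Theorem 3.7]{BCV} the antipode of $FZ$ is an identity-on-objects semi-Hopf $\Vv$-functor $S^{FZ}\colon FZ\to (FZ)^{op,cop}$, and since every $Ff$ preserves antipodes, the maps $\gamma_Z^{op,cop}\circ S^{FZ}\colon FZ\to\ul B^{op,cop}$ form a cocone on $F$ (using functoriality of $(-)^{op,cop}$ from \cref{opcop} together with the cocone identities for $\gamma$). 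The universal property of $\ul B$ yields a unique semi-Hopf $\Vv$-functor $S\colon\ul B\to\ul B^{op,cop}$ with $S\circ\gamma_Z=\gamma_Z^{op,cop}\circ S^{FZ}$; as the $\gamma_Z^0$ are jointly surjective, $S$ is the identity on objects, so its components $S_{x,y}\colon\ul B_{x,y}\to\ul B_{y,x}$ are the candidate antipode. To verify the antipode axiom $m\circ(\id\ot S_{x,y})\circ\delta_{x,y}=j_x\circ\epsilon_{x,y}$ (and its mirror identity), it suffices by \cref{familieslimcolim}(i) to check it after precomposition with an element of the jointly epic family, i.e.\ on a generalised element $w=\gamma_{Z_1}(a_1)\cdots\gamma_{Z_n}(a_n)$ with the $a_i$ generalised elements of the $FZ_i$. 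Writing everything in generalised Sweedler notation and using that the $\gamma_{Z_i}$ are coalgebra and algebra morphisms, that $S$ is an anti-morphism built componentwise from the $S^{FZ_i}$, and that each $S^{FZ_i}$ satisfies the antipode axiom in $FZ_i$, one telescopes the product from the inside out: the innermost pair of factors $\gamma_{Z_n}(a_{n(1)})\cdot\gamma_{Z_n}(S^{FZ_n}(a_{n(2)}))$ collapses, by the antipode axiom in $FZ_n$, to a copy of the unit, which is absorbed by the adjacent factor; iterating yields $\epsilon(a_1)\cdots\epsilon(a_n)\,j_x=\epsilon_{\ul B}(w)\,j_x$, that is $(j_x\circ\epsilon_{x,y})(w)$. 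Hence $\ul B$ is a Hopf $\Vv$-category, so $\VHopf$ is closed under colimits in $\VsHopf$ and is in particular cocomplete.

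Part (2) is dual and uses very flatness. For $F\colon\Zz\to\VHopf$ let $\ul B=\lim F$ in $\VsHopf$, with cone $\pi_Z\colon\ul B\to FZ$. Then $S^{FZ}\circ\pi_Z\colon\ul B\to (FZ)^{op,cop}$ is a cone on $(-)^{op,cop}\circ F$, and since $\ul B^{op,cop}=\lim\bigl((-)^{op,cop}\circ F\bigr)$ with projections $\pi_Z^{op,cop}$, the universal property gives a unique semi-Hopf $\Vv$-functor $S\colon\ul B\to\ul B^{op,cop}$, again the identity on objects (by joint monicity of the $\pi_Z^0$), with $\pi_Z^{op,cop}\circ S=S^{FZ}\circ\pi_Z$. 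To see that its components form an antipode, compose both sides of the antipode identity with the jointly monic family of \cref{familieslimcolim}(ii): applying $\pi_{Z_i}$ to $w_{(1)}\cdot S(w_{(2)})$ and using that $\pi_{Z_i}$ is a coalgebra and algebra morphism satisfying $\pi_{Z_i}\circ S=S^{FZ_i}\circ\pi_{Z_i}$ reduces the identity to the antipode axiom in $FZ_i$. The $n$-fold version of this computation requires a tensor product of the jointly monic families $(\pi_Z)$ to remain jointly monic, which is exactly where very flatness enters, via \cref{tensorjointlymonic}. Thus $\ul B$ is a Hopf $\Vv$-category, $\VHopf$ is closed under limits in $\VsHopf$, and hence complete.

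The routine parts are fullness and the bookkeeping of the (co)cone identities; the real work, and what separates the two hypotheses, is the verification of the antipode axiom on the generating family. In the colimit case the telescoping on the jointly epic family of \cref{familieslimcolim}(i) uses no exactness of $\ot$, whereas in the limit case one is forced to tensor jointly monic families together, and only very flatness guarantees, via \cref{tensorjointlymonic}, that the result is still jointly monic, so that the element-wise check on the family of \cref{familieslimcolim}(ii) is legitimate. One should also keep in mind that being a semi-Hopf functor into $\ul B^{op,cop}$ does not on its own force the antipode identities (the identity map of the polynomial Hopf algebra is an anti-bialgebra morphism but not the antipode), so the axiom check genuinely uses that $S$ is assembled from the true antipodes $S^{FZ}$.
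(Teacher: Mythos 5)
Your proposal is correct and follows essentially the same route as the paper: fullness via automatic preservation of antipodes, construction of the candidate antipode on the (co)limit in $\VsHopf$ from the (co)cone $S^{FZ}\circ\pi_Z$ (resp.\ $\gamma_Z^{op,cop}\circ S^{FZ}$) using that $(-)^{op,cop}$ commutes with (co)limits, and verification of the antipode axiom by a telescoping Sweedler-notation computation against the jointly epic, resp.\ jointly monic, families of \cref{familieslimcolim}, with very flatness entering exactly where you say it does (via \cref{tensorjointlymonic}, to keep tensor products of jointly monic families jointly monic in the limit case). The only cosmetic difference is that the paper writes out the telescoping in detail for the limit case and sketches the colimit case, whereas you do the reverse.
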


\begin{proof}
By definition, $\VHopf$ is a full subcategory of $\VsHopf$. We should show that $\VHopf$ is closed in $\VsHopf$ under limits and colimits. 

\ul{(2)}.
Consider a small category $\Zz$ and a functor $F':\Zz\to \VHopf$. Let $F:\Zz\to\VsHopf$ be the composition of $F'$ with the inclusion functor $\VHopf\to\VsHopf$ and consider the limit $\lim F$ in $\VsHopf$. We will show that the semi-Hopf $\Vv$-category $\lim F$ has an antipode, hence $\lim F=\lim F'$.

Denote $\lim F=\ul L=(L^0,L^1)$ and denote for any object $Z\in \Zz$ the canonical projections of the limit by $\pi_Z:L\to FZ$.
Since the functor $(-)^{op,cop}:\VsHopf\to\VsHopf$ is an isomorphism of categories (see \cref{opcop}), it commutes with limits. Hence, denoting $(-)^{op,cop}\circ F=F^{op,cop}$, we find that $\lim F^{op,cop}=(\lim F)^{op,cop}$, and the projections $\pi_Z^{op}:\lim F^{op,cop}\to F_Z$ are the opposite of the projections $\pi_Z$ (i.e.\ $(\pi^{op}_Z)_{xy}=(\pi_Z)_{yx}$).
{ The antipodes $S_{FZ}$ of the Hopf categories $FZ$ induce a cone} $S_{FZ}\circ \pi_Z:\lim F\to FZ^{op,cop}$ on $F^{op,cop}$ (in the category of semi-Hopf categories). Therefore we obtain a unique morphism of semi-Hopf categories $S:\lim F\to \lim F^{op,cop}=(\lim F)^{op,cop}$ such that $\pi_Z^{op}\circ S=S_{FZ}\circ \pi_Z$. We claim that $S$ is an antipode for $\lim F$. 

Now fix $x,y\in L^0$.
By \cref{familieslimcolim}\footnote{Remark that at this point we use the very flat monoidal product.}, we know that (using the same notation as in \cref{familieslimcolim}) the family of morphisms $(\pi^1_{yy}\ot\cdots\ot \pi^n_{yy})\circ \delta^n_{yy}$ indexed over all $n\in \NN$ and $Z_1,\ldots,Z_n\in\Zz$ is jointly monic in $\Vv$. Hence, to check the antipode property $m_{yxy}\circ (S_{xy}\ot id)\circ \delta_{xy}$ it is enough to check it post-composed with these morphisms.  As explained just above the proposition, we check this equality on an arbitrary generalized element $a:A\to L_{xy}$.
Then we find
\begin{eqnarray*}
&&(\pi^1_{yy}\ot\pi^2_{yy}\ot\cdots\ot \pi^n_{yy})\circ \delta^n_{yy}\circ m_{yxy}\circ (S_{xy}\ot id)\circ \delta_{xy}(a)\\
&=&(\pi^1_{yy}\ot\pi^2_{yy}\ot\cdots\ot \pi^n_{yy})\circ \delta^n_{yy}\circ m_{yxy}(S_{xy}(a_{(1)})\ot a_{(2)}))\\
&=& \pi^1_{yy}\circ m_{yxy}\big(S_{xy}(a_{(n)})\ot a_{(n+1)}\big)\ot \pi^2_{yy}\circ m_{yxy}\big(S_{xy}(a_{(n-1)})\ot a_{(n+2)})\big)\ot \cdots \\
&& \hspace{8cm}\cdots \ot \pi^n_{yy}\circ m_{yxy}\big(S_{xy}(a_1)\ot a_{(2n)})\big)\\
&=& m_{y_1x_1y_1}^1\big(S^1_{x_1y_1}(\pi^1_{xy}(a_{(n)})_{(1)})\ot \pi^1_{xy}(a_{n})_{(2)}\big)\ot \pi^2_{yy}\circ m_{yxy}\big(S(a_{(n-1)})\ot a_{(n+1)}\big)\ot \cdots\\
&& \hspace{8cm}\cdots \ot \pi^n_{yy}\circ m_{yxy}\big(S(a_1)\ot a_{(2n-1)}\big)\\
&=& j^1_{y_1y_1}\epsilon^1_{x_1y_1}\big(\pi^1_{xy}(a_{(n)})\big) \ot \pi^2_{yy}\circ m_{yxy}\big(S(a_{(n-1)})\ot a_{(n+1)}\big)\ot \cdots\\
&& \hspace{8cm}\cdots \ot \pi^n_{yy}\circ m_{yxy}\big(S(a_1)\ot a_{(2n-1)}\big)\\
&=& \pi^1\circ j_{y} \epsilon_{xy}(a_{(n)}) \ot \pi^2_{yy}\circ m_{yxy}\big(S(a_{(n-1)})\ot a_{(n+1)}\big)\ot \cdots\\
&& \hspace{8cm}\cdots \ot \pi^n_{yy}\circ m_{yxy}\big(S(a_1)\ot a_{(2n-1)}\big)\\
&=& \pi^1_{yy}\circ j_{y} \ot \pi^2_{yy}\circ m_{yxy}\big(S(a_{(n-1)})\ot a_{(n)}\big)\ot \cdots\ot \pi^n_{yy}\circ m_{yxy}\big(S(a_{(1)})\ot a_{(2n-2)}\big)\\ 
&=& \cdots\\
&=& \pi^1_{yy}\circ j_{y} \ot \pi^2_{yy}\circ j_{y} \ot \cdots \ot \pi^n_{yy}\circ j_{y} \epsilon_{xy}(a)\\
&=& (\pi^1\ot \cdots \ot \pi^n)\circ \delta^n_{yy} \circ j_{y} \epsilon_{xy}(a)
\end{eqnarray*}
Here we used in the second equality the compatibility between multiplication (composition) and comultiplication in $L$, together with coassociativity and the fact that the antipode morphism $S_{xy}$ is an anticoalgebra morphism. The third equality follows from the fact that $\pi^1$ is a semi-Hopf category morphism, combined with the construction of $S$ which tells us that $\pi^1_{yx}\circ S_{xy}=S^1_{xy}\circ \pi^1_{xy}$. The forth equality is the antipode property for the Hopf category $FZ_1$. The fifth equality follows from the fact that $\pi^1$ is a morphism of semi-Hopf categories, hence it commutes with the units $j$ and counits $\epsilon$. The sixth equality follows from the counit property of the coalgebra $L_{xy}$. Then we use an induction step. The last equality follows from the compatibility between comultiplication and unit in the semi-Hopf category $L$.

Since $(\pi^1_{yy}\ot\cdots\ot \pi^n_{yy})\circ \delta^n_{yy}$ is a jointly monic family and $a:A\to L_{xy}$ was an arbitrary generalized element, we can conclude from the above calculation that $m_{yxy}\circ (S_{xy}\ot id)\circ \delta_{xy}=j_{y} \epsilon_{xy}$ for all $x,y \in L^0$. Similarly, one shows that $L$ satisfies the right antipode property, hence $L$ is a Hopf category.

\ul{(1)}. 
Consider again a functor $F':\Zz\to\VHopf$ and compose it with the forgetful functor to semi-Hopf categories to obtain the functor $F:\Zz\to\VsHopf$. As the proof of part \ul{(2)}, we consider $\colim F$ in $\VsHopf$ and build a candidate antipode on this object.  \cref{familieslimcolim} provides a jointly epic family in $\Vv$ on each Hom-object of $\colim F$\footnote{In this case, the monoidal product is not required to be very flat.}. By a similar computation as in part \ul{(1)}, one then proves the antipode property by pre-composing it with this jointly epic family. 
\end{proof}

{
We are now ready to state the main theorem of this paper and provide a very short proof, based on the previous theorem showing that the forgetful functor from Hopf categories to semi-Hopf categories preserves all limits and colimits, and the earlier results on local presentability. 
}

\begin{theorem}\label{freecofreeHopfcat}
Let $\Vv$ be a closed symmetric monoidal and locally presentable category with very flat monoidal product. Then $\VHopf$ is locally presentable and the fully faithful inclusion functor $\VHopf\to\VsHopf$ has a left adjoint $H$ and right adjoint $H^c$. That is, for any Hopf category $\ul H$ and any semi-Hopf category $\ul A$, we have that
\begin{eqnarray}
\VsHopf(\ul A,\ul H) &=& \VHopf( H\ul A,\ul H) \label{freeadjunction}\\
\VsHopf(\ul H,\ul A) &=& \VHopf( \ul H,H^c\ul A) \label{cofreeadjunction}
\end{eqnarray}
\end{theorem}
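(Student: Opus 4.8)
The plan is to derive the theorem purely formally from \cref{HopfinsHopf} together with the general facts about locally presentable categories collected in \cref{adjointlp} and \cref{sublp}. First I would invoke \cref{sHopflp}(i) to know that $\VsHopf$ is locally presentable. By \cref{HopfinsHopf}, the full subcategory $\VHopf\subseteq\VsHopf$ is closed under all colimits (part (1)) and under all limits (part (2), where the very flat hypothesis on $\Vv$ enters). Hence \cref{sublp} applies at once and shows that $\VHopf$ is itself locally presentable.

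Next I would handle the two adjoints. The inclusion $\iota\colon\VHopf\to\VsHopf$ is a functor between locally presentable categories, and by \cref{HopfinsHopf} limits and colimits of Hopf categories are computed in $\VsHopf$, so $\iota$ preserves all limits and all colimits. Preservation of colimits together with \cref{adjointlp}(i) produces a right adjoint $H^c$; preservation of limits and, a fortiori, of $\lambda$-filtered colimits for every regular cardinal $\lambda$, together with \cref{adjointlp}(ii), produces a left adjoint $H$. Since $\iota$ is the inclusion of a full subcategory it is fully faithful, so the hom-set bijections of these two adjunctions are exactly the stated natural isomorphisms \eqref{freeadjunction} and \eqref{cofreeadjunction}.

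All the real work is upstream, in \cref{HopfinsHopf}: the only genuine obstacle is showing that an antipode survives passage to a limit in $\VsHopf$, which is precisely where very flatness of the monoidal product is used, since it guarantees that the relevant family of structure morphisms out of the limit stays jointly monic and hence allows the antipode identities to be checked on generalized elements in Sweedler notation. Granted that, the present theorem is immediate and the promised short proof consists essentially of the two paragraphs above. One may add, in parallel with \cref{rem:freeVcat}, that $H$ and $H^c$ leave the underlying set of objects of a semi-Hopf category unchanged, because the forgetful functors to $\Set$ in play preserve both limits and colimits (\cref{Upreserves}, \cref{Uvcattosetpreserves}).
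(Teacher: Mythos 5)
Your proposal is correct and follows exactly the paper's own argument: local presentability of $\VsHopf$ from \cref{sHopflp}, closure of $\VHopf$ under limits and colimits from \cref{HopfinsHopf} combined with \cref{sublp}, and then both adjoints from \cref{adjointlp} since the inclusion preserves all limits and colimits. No discrepancies to report.
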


\begin{proof}
By \cref{sHopflp}, the category $\VsHopf$ is locally presentable and since $\VHopf$ is a complete and cocomplete full subcategory of $\VsHopf$ by \cref{HopfinsHopf}, it follows from \cref{sublp} that $\VHopf$ is itself locally presentable. Moreover, as the inclusion functor $\VHopf\to \VsHopf$ preserves all limits an colimits, it has both a left and right adjoint by \cref{adjointlp}.
\end{proof}

\section{On the construction of free and cofree Hopf categories}\selabel{construction}

In the previous section, we proved the existence of free and co-free Hopf categories. The aim of this section is to sketch explicitly the construction of the free Hopf $\Vv$ category $H\ul A$ and the cofree Hopf category $H^c\ul A$ over a given semi-Hopf $\Vv$-category $\ul A$,  where $\Vv$ is a symmetric closed monoidal finitely presentable category with very flat monoidal product. As we already observed before, the free and cofree Hopf categories over $\ul A$ has the same set of objects as $\ul A$, so $HA^0=A^0=H^cA^0$. The Hom-objects of the free and cofree Hopf categories are described in the following two subsections.

\subsection{The free Hopf category}

Let us now describe the construction of Hom-objects for the free Hopf category, $HA^1$. This construction goes in 3 steps.

\ul{Step 1}. Consider any $x,y\in A^0$. We will transform the coalgebra $A_{xy}$ into a larger $\Vv$-coalgebra, to have enough room to freely contain the image of the antipode. To this end, we define for any $i\in \NN$, a coalgebra
$$A^{(i)}_{xy}:=\left\{\begin{array}{rcl} 
A_{xy} & {\text{if}} & 2 \mid i\\
A_{yx}^{cop} & {\text{if}} & 2\nmid i
\end{array}\right.$$
Then we consider the coalgebra $A'_{xy}:= \coprod_{i\in \NN} A^{(i)}_{xy}$ in $\Vv$. In this way, we obtain the $\Coalg(\Vv)$-$\Grph$ $\ul A'=(A^0,A'_{xy})$. Moreover, there is an identity-on-objects morphism of $\Coalg(\Vv)$-graphs $\ul \iota:\ul A\to \ul A'$, defined for all $x,y\in A^0$ by the inclusion $A_{xy} = A^{(0)}_{xy} \to A'_{xy}$. Furthermore, $A'$ is naturally equipped with a $\Coalg(\Vv)$-$\Grph$ morphism $s:A'\to A'^{op,cop}$, induced by the coalgebra morphisms $s^{(i)}_{xy}:A^{(i)}_{xy}\to (A^{(i+1)}_{yx})^{cop}$, which are all given by identities. Moreover, for each even $i\in \NN$ and each odd $j\in \NN$ we have a morphism of $\Coalg(\Vv)$-graphs 
\begin{eqnarray*}
\iota^{(i)}:\ul A\to \ul A',&& \iota^{(i)}_{xy}:A_{xy}\to A^{(i)}_{xy}\\
\iota^{(j)}:\ul A^{op,cop}\to \ul A',&& \iota^{(j)}_{xy}:A_{yx}^{cop}\to A^{(j)}_{xy}
\end{eqnarray*}

\ul{Step 2}. Now we consider the free semi-Hopf category over the $\Coalg(\Vv)$-$\Grph$ $\ul A'$. That is, we apply the left adjoint $T$ to the forgetful functor $\textsf{$\Coalg(\Vv)$-$\Cat$}\to \textsf{$\Coalg(\Vv)$-$\Grph$}$ from \cref{Vcatlp} and as described in Remark \ref{rem:freeVcat}(2). Explicitly, we obtain at this stage for any $x,y\in (T\ul A')^0=A^0$
$$(TA')_{xy}= \coprod_{n\in \NN, z_1,\ldots,z_n\in A^0} A'_{xz_1}\ot \cdots \ot A'_{z_ny},$$
adding an additional component of the monoidal unit $I$ to this coproduct if $x=y$.

We can then endow $T\ul A'$ with a morphism of semi-Hopf $\Vv$-categories $\ul S': T\ul A'\to (T\ul A')^{op,cop}$. In each component, $\ul S'$ is given by an anticoalgebra morphism $S'_{xy}: (TA')_{xy}\to (TA')_{yx}$ obtained by reversing the ``path'' from $x$ to $y$ and applying $s_{wz}:A'_{wz} \to A'_{zw}$ for all pairs $(w,z)$ of consecutive indices in the path. Since $\Vv$ is a symmetric monoidal category, we can realize this morphism by a composition a suitable symmetry isomorphism with a tensor product of $s_{wz}$:
$$
\xymatrix{A'_{x,x_1}
\ot \cdots \ot A'_{x_n,y} \cong A'_{x_n,y} \ot \cdots 
\ot A'_{x,x_1} 
\ar[rrr]^-{s_{x_n,y}\ot \cdots 
\ot s_{x,x_1}} 
&&& A'_{y,x_n}\ot \cdots 
\ot A'_{x_1,x}}$$
And furthermore $S'_{xx}$ acts as the identity on the additional component $I$. Composing the morphisms $\iota^{(i)}$ defined in Step 1 with the canonical morphism of $\Coalg(\Vv)$-graphs $\ul A'\to T\ul A'$ we obtain morphism of $\Coalg(\Vv)$-graphs, for $i$ even and $j$ odd:
\begin{eqnarray*}
\iota^{(i)'}:\ul A\to T\ul A',&& \iota^{(i)'}_{xy}:A_{xy}\to TA^{(i)}_{xy}\\
\iota^{(j)'}:\ul A^{op,cop}\to T\ul A',&& \iota^{(j)'}_{xy}:A_{yx}^{cop}\to TA^{(j)}_{xy}.
\end{eqnarray*}

\ul{Step 3}.
We ``factor out'' certain relations by means of a suitable coequalizer $q:T\ul A'\to H\ul A$ in $\Vv$-$\Grph$, such that $H\ul A$ becomes the universal Hopf category we are looking for. In other words, we want $q:T\ul A'\to H\ul A$ to be the ``largest quotient\footnote{By ``quotient'' here we mean the underlying morphism of $q$ in $\Vv$-Grph is a (regular) epimorphism, as it was already stated before. This regular epimorphism can be described explicitly, and means intuitively that one has to quotient out not only the relations itself, but the ``ideal'' generated by them.}'' in the category of $\Vv$-categories such that the following conditions hold. 
\begin{enumerate}[(a)]
\item The morphisms $q\circ \iota^{(i)'}$ are morphisms of $\Vv$-categories. (Quotienting this relation on the free $\Vv$-categories turns the coproduct of copies of $A$ and $A^{op,cop}$ into a coproduct in the category of $\Vv$-categories).
\item The antipode conditions (which do not necessarily hold in $T\ul A'$), become valid in $H\ul A$, that is, $q_{xx}:(T\ul A')_{xx}\to (H\ul A)_{xx}$ coequalizes all pairs $(m_{xyx}\circ (id \ot S'_{xy})\circ \delta_{xy}, j_x\circ \epsilon_{xy})$ and $m_{xyx}\circ (S'_{yx}\ot id)\circ \delta_{yx}, j_x\circ \epsilon_{yx})$ for each $x,y\in X$.
\end{enumerate}
Then one can show that $H\ul A$ naturally also has a unique $\Coalg(\Vv)$-graph structure (this requires a non-trivial but technical verification, see e.g. \cite[Crucial Lemma]{Porst:formal1} for the one-object case) such that it becomes a semi-Hopf category and $q\circ \iota^{(i)'}$ are morphisms of semi-Hopf categories. Moreover one obtains a morphism $\ul S:H\ul A\to (H\ul A)^{op,cop}$ such that $\ul S\circ q=q^{op}\circ \ul S'$ turning $H\ul A$ into a Hopf $\Vv$-category.
Although it is quite clear which relations are to be factored out, it is quite involved to verify in detail that every step is justified and this construction gives indeed the free Hopf category over $\ul A$. For some additional details, we refer to \cite{Gross}.

From the construction above, one sees that for any $x,y\in A^0$ we have the following jointly epic family of morphisms in $\Vv$
$$\xymatrix{A_{(xz_1)}\ot \cdots \ot A_{(z_ny)} \ar[rrr]^-{\iota^{(i_1)}_{xz_1}\ot \cdots \ot \iota^{(i_{n+1})}_{z_ny}} &&& TA_{xy} \ar[rr]^-{q_{xy}} && HA_{qx,qy}}$$
indexed by $n, i_1,\ldots, i_{n+1}\in \NN$ and $z_1,\ldots,z_n\in A^0$. We have put the indices in the first tensor product between brackets, to indicate that their order needs to be reversed whenever the corresponding $i_k$ is odd.

A given morphism of semi-Hopf categories $f:\ul A\to \ul H$, where $\ul H$ is a Hopf category, then induces a unique morphism of Hopf categories $\tilde f:H\ul A\to \ul H$ under the bijection \eqref{freeadjunction}. By the above jointly epic family, $\tilde f$ is completely determined by means of its pre-composition with the morphisms in this family. In this way, $\tilde f$ is defined by the following morphisms
$$\xymatrix{
A_{(xz_1)}\ot \cdots \ot A_{(z_ny)} \ar[rr]^-{f\ot \cdots \ot f} && H_{(xz_1)}\ot \cdots \ot H_{(z_ny)} \ar[rr]^{S_{xz_1}^{i_1} \ot \ldots \ot S_{z_ny}^{i_{n+1}}} && H_{xz_1}\ot \cdots \ot H_{z_ny} \ar[r]^-m &
H_{xy}
}$$ 

\subsection{The cofree Hopf category}

As in the case of the free Hopf category, we construct the Hom-objects of the cofree Hopf category in three steps.

\ul{Step 1}. For any $x,y\in A^0$, we set for any $i\in \NN$
$$A^{(i)}_{xy}:=\left\{\begin{array}{rcl} 
A_{xy} & {\text{if}} & 2 \mid i\\
A_{yx} & {\text{if}} & 2\nmid i
\end{array}\right.$$
Then define $A'_{xy}=\prod_{i\in \NN} A^{(i)}_{xy}$, which defines a new $\Vv$-graph $\ul A'$ over $A^0$. Moreover this $\Vv$-graph is a $\Vv$-category by means of the compositions $m'_{xyz}:A'_{xy}\ot A'_{yz}\to A'_{xz}$ and units $j'_{xx}:I\to A'_{xx}$ defined by the commutativity of the following diagrams
\[
\xymatrix{
A'_{xy}\ot A'_{yz} \ar[rr]^-{m'_{xyz}} \ar[d]_-{\pi^{(i)}_{xy}\ot \pi^{(i)}_{yz}} && A'_{xz} \ar[d]^-{\pi^{(i)}_{xz}} \\
A^{(i)}_{xy}\ot A^{(i)}_{yz} \ar[rr]^-{m^{(i)}_{xyz}} && A^{(i)}_{xz}
}\qquad
\xymatrix{
I \ar[rrd]_-{j_{xx}} \ar[rr]^-{j'_{xx}} && A'_{xx} \ar[d]^-{\pi^{(i)}_{xx}} \\
&& A^{(i)}_{xx}=A_{xx}}
\]
In the above diagram, the morphism $\pi$ denote the obvious projection morphisms from the product in $\Vv$, $m^{(i)}_{xyz}$ is just the composition $m_{xyz}$ of $\ul A$ if $i$ is even and it is the opposite composition $m_{zyx}\circ \sigma$ in case $i$ is odd.
Moreover consider for each $i\in \NN$ and $x,y\in A^0$ the morphism $s^{(i)}_{xy}:A_{xy}^{(i)}\to A_{yx}^{(i+1)}$ given by identity. Then these induce a morphism of $\Vv$-categories $s:\ul A'\to \ul A'^{op}$. 
Moreover, for each even $i\in \NN$ and each odd $j\in \NN$ we have a morphism of $\Vv$-categories
\begin{eqnarray*}
\pi^{(i)}:\ul A'\to \ul A,&& \pi^{(i)}_{xy}: A^{(i)}_{xy}\to A_{xy}\\
\pi^{(j)}:\ul A'\to \ul A^{op},&& \pi^{(j)}_{xy}:A^{(j)}_{xy} \to A_{yx}
\end{eqnarray*}
which are defined as the identity in each component.

\ul{Step 2}. Now we take the cofree semi-Hopf $\Vv$-category over the $\Vv$-category $\ul A'$ as seen in \cref{sHopflp} and subsequent remark. In other words, we replace each Hom-object $A'_{xy}$ by the cofree coalgebra over it $T^cA'_{xy}$. 
{ Applying the cofree functor to the morphism of $\Vv$-categories $s$, we obtain a morphism of semi-Hopf categories $T^c(s):T^c\ul A' \to T^c(\ul A^{op}) = (T^c\ul A)^{op}$. Moreover, we know from \cref{corcofree}(1) that for each pair of objects $x,y$ there is a natural coalgebra isomorphism $T^cA'_{xy} \cong (T^cA_{xy})^{cop}$. Combining these with $T^c(s)$, we obtain a} morphism of semi-Hopf categories $S':T^c\ul A'\to (T^c\ul A')^{op,cop}$\footnote{Remark that the isomorphism $T^cA'_{xy} \cong (T^cA_{xy})^{cop}$ can be build up from the symmetry of the tensor product, which highlights again the similarity with the free construction explained above.}. Precomposing the morphisms $\pi^{(i)}$ from Step 1 with the cofree projection morphism $p:T^c\ul A'\to \ul A$, we obtain the following morphisms for all $i\in \NN$ even and $j\in \NN$ odd
\begin{eqnarray*}
\pi^{(i)'}:T^c\ul A'\to \ul A,&& \pi^{(i)'}_{xy}: T^cA^{(i)}_{xy}\to A_{xy}\\
\pi^{(j)'}:T^c\ul A'\to \ul A^{op},&& \pi^{(j)'}_{xy}:T^cA^{(j)}_{xy} \to A_{yx}
\end{eqnarray*}

\ul{Step 3}. Finally, following an approach similar to the constructions in \cref{cofreeconstruction} and \cref{limitsincoalg}, one constructs the largest $Coalg(\Vv)$-subgraph $\iota : H^c\ul A\to T^c\ul A'$, such that the following conditions hold.
\begin{enumerate}[(a)]
\item Each of the compositions $\pi^{(i)'}\circ \iota$ is a morphism of $\Coalg(\Vv)$-graphs. 
\item The antipode conditions (which do not necessarily hold in $T^c\ul A'$), become valid in $H\ul A$. This means that for all $x,y$ in $X$, the morphisms $\iota_{xy}$ equalizes the pairs  $(m_{xyx}\circ (id \ot S'_{xy})\circ \delta_{xy}, j_x\circ \epsilon_{xy})$ and $m_{yxy}\circ (S'_{xy}\ot id)\circ \delta_{xy}, j_y\circ \epsilon_{xy})$.
\end{enumerate}
Then $H^c$ can be endowed naturally with a $\Vv$-category structure (again, this a non-trivial point for which we refer to \cite[Crucial Lemma]{Porst:formal1} for the one-object case), turning it into a semi-Hopf $\Vv$-category such that $\pi^{(i)'}\circ \iota$ is a morphism of semi-Hopf $\Vv$-categories. Moreover, we obtain a morphism of semi-Hopf categories $S:H^c\ul A\to (H^c\ul A)^{op,cop}$ such that $\iota^{op}\circ S=S'\circ \iota$, and this turns $H\ul A$ in a Hopf category.
Again, verifying that the above construction indeed yields the cofree Hopf category over $\ul A$ is rather involved. 

From the above construction, and the construction of the cofree coalgebra as given in \cref{cofreeconstruction}, we see that for any pair $x,y\in A^0$, we have the following jointly monic family of morphisms in $\Vv$.
$$\xymatrix{H^cA_{xy} \ar[rrr]^-{\pi^{(i_1)}_{xy}\ot \cdots \ot \pi^{(i_{n+1})}_{xy}} &&&
A_{(xy)}\ot \cdots \ot A_{(xy)} }$$
indexed by $n, i_1,\ldots, i_{n+1}\in \NN$ and $z_1,\ldots,z_n\in A^0$. Again, we have put the indices in latter tensor product between brackets, to indicate that their order needs to be reversed whenever the corresponding $i_k$ is odd.

Given a morphism of semi-Hopf categories $f:\ul H\to \ul A$, where $\ul H$ is a Hopf category then induces a unique morphism of Hopf categories $\tilde f:\ul H\to H^c\ul A$ under the bijection \eqref{cofreeadjunction}. By the above jointly monic family, $\tilde f$ is completely determined by means of its composition with the morphisms in this family. In this way, $\tilde f$ is defined by the following morphisms
$$\xymatrix{
H_{xy} \ar[r]^-{\delta_{xy}^n} & 
H_{xy}\ot \cdots \ot H_{xy}
\ar[rr]^{S_{xy}^{i_1} \ot \ldots \ot S_{xy}^{i_{n+1}}} && H_{(xy)}\ot \cdots \ot H_{(xy)}
 \ar[rr]^-{f\ot \cdots \ot f} && A_{(xy)}\ot \cdots \ot A_{(xy)}  
}$$

\section{Conclusions and outlook}

The results of this paper can now be summarized in the following diagram, which can be constructed for any symmetric closed monoidal and locally presentable category $\Vv$ with very flat monoidal product. The arrows going from left to right in this diagram are `forgetful' functors, the arrows from right to left are the free and cofree constructions, so left and right adjoints (as indicated by the adjunction symbol), which we constructed in this paper. The lower part of the diagram is the classical one-object situation which we mentioned in the introduction. The diagram shows how we indeed lifted this to the multi-object setting, since the vertical arrows indicate the functor that send internal structures in $\Vv$ to the one-object versions above. As the diagram commutes, our multi-object construction indeed strictly generalizes the one-object constructions.
\begin{equation}\label{diagramconclusion}
\xymatrix{
\VHopf \ar[drrr] \\
&&& \VsHopf \ar@<2ex>[ulll]^-{\ref{freecofreeHopfcat}}  \ar@<-2ex>[ulll]_-{\ref{freecofreeHopfcat}} \ar@<1ex>@{}[ulll]|-{\nwvdashhh} \ar@<-1ex>@{}[ulll]|-{\nwvdashhh}
  \ar@<1ex>[rr]_{\top} \ar@<1ex>[dr] && \Coalg(\Vv){\textsf{-Grph}} \ar@<1ex>[dr] \ar@<1ex>[ll]^-{\ref{sHopflp}(ii)} \\
\Hopf(\Vv) \ar[drrr] \ar[uu]&&& & \VCat \ar@<1ex>[rr]_-{\top\hspace{1cm}} \ar@{}[ul]^(.25){}="a"^(.95){}="b" \ar@<1ex> "a";"b" ^-{\ref{sHopflp}(iii)} \ar@{}[ul]|-{\nwvdashh}  && \VGrph \ar@<1ex>[ll]^-{\ref{Vcatlp}\qquad} \ar@{}[ul]^(.25){}="a"^(.95){}="b" \ar@<1ex> "a";"b" ^-{\ref{corcofree}} \ar@{}[ul]|-{\nwvdashh} \\
&&& \Bialg(\Vv) \ar@<2ex>[ulll]  \ar@<-2ex>[ulll] \ar@<1ex>@{}[ulll]|-{\nwvdashhh} \ar@<-1ex>@{}[ulll]|-{\nwvdashhh}
\ar@<1ex>[rr]|(0.45){\phantom{x}}_{\hspace{.5cm}\top} \ar[uu] \ar@<1ex>[dr] && \Coalg(\Vv) \ar[uu]|(0.44){\phantom{x}}|(0.56){\phantom{x}} \ar@<1ex>[dr]  \ar@<1ex>[ll]|(0.55){\phantom{x}} \\
&&& & \Alg(\Vv) \ar@<1ex>[rr]_{\top} \ar[uu] \ar@{}[ul]^(.25){}="a"^(.95){}="b" \ar@<1ex> "a";"b" \ar@{}[ul]|-{\nwvdashh} && \Vv \ar[uu] \ar@<1ex>[ll] 
\ar@{}[ul]^(.25){}="a"^(.95){}="b" \ar@<1ex> "a";"b" ^-{\ref{cofreeconstruction}}
\ar@{}[ul]|-{\nwvdashh} 
} 
\end{equation}
Moreover, all functors in the upper part of the above diagram commute with the forgetful functors to $\Set$. Hence, if we consider $\Vv$-categories, semi-Hopf $\Vv$-categories or Hopf $\Vv$-categories with a fixed set of objects, then the free and cofree constructions restrict and corestrict to these subcategories. This holds in particular for the one-object case as we remarked just above. 
 
When we consider the particular instance $\Vv=\Set$, we recover another well-known case. Firstly, observe that since $\Set$ is Cartesian, $\Coalg(\Set)=\Set$. Hence $\Set$-categories and semi-Hopf $\Set$-categories coincide, and are just small categories. Hopf $\Set$-categories are groupoids. The right adjoint of the forgetful functor ${\textsf{$\Set${\sf-sHopf}}}\to{\textsf{$\Set${\sf-Cat}}}$ associates to category $A$ the groupoid of all isomorphims in $A$. The left adjoint is more involved and constructs the free groupoid over a category, see e.g.\ \cite{Brown}.

Another case of particular interest, is given by the semi-Hopf category of algebras, which are known to be enriched over coalgebras by means of Sweedler's universal measuring coalgebras \cite{Sweedler, Tambara, AGV}. Our construction of the (co)free Hopf category over a semi-Hopf category allow to turn the category of algebras into a Hopf category. This construction is then similar to what is done in \cite{Manin} to obtain general quantum symmetry groups of non-commutative spaces. However, an important remark should be made here. In the Hopf category of algebras which we would obtain as explained above, the endo-Hom-object of an algebra $A$ would naturally act on $A$. In \cite{Manin} however, a dual situation is considered, where the Hopf category would ``coact'' on $A$. However, in order to obtain such a universal coacting object, a finiteness condition on $A$ is required. Indeed, in \cite{Manin}, $A$ is supposed to be a graded algebra that is finite dimensional in each degree. This finiteness condition has been refined in \cite{AGV}. In fact, following the constructions of \cite{Manin} and \cite{AGV}, one would obtain a Hopf opcategory, rather then a Hopf category. Recall that in a Hopf $\Vv$-opcategory $A$, the Hom-objects $A_{xy}$ are algebras in $\Vv$, which are endowed with ``cocomposition morphisms'' $\Delta_{xyz}:A_{xz}\to A_{xy}\ot A_{yz}$ and counits $\epsilon_x:A_{xx}\to I$, satisfying dual conditions as those of a Hopf-category. The natural question arises whether the free and cofree construction of the present paper can also be obtained in the setting of Hopf-opcategories. Let us remark that in \cite{Vasi}, that co-categories rather then opcategories are considered, and these are shown to be comonadic over $\Vv$-graph and inheriting locally presentability from $\Vv$. Whether opcategories and Hopf-opcategories also are locally presentable remains a question for future investigations and should be studied along with (Sweedler) duality between Hopf-categories and Hopf-opcategories.

\subsection*{Acknowledgements}
The authors would like to thank warmly Christina Vasilakopoulou, for interesting discussions and useful explanations of locally presentable categories. We also would like to thank warmly the referee for the careful reading of our paper, and the many suggestions and corrections which greatly improved the manuscript.

For this research, JV would like to thank the FWB (f\'ed\'eration Wallonie-Bruxelles) for support through the ARC project “From algebra to combinatorics, and back”. PG thanks the FNRS (Fonds de la Recherche Scientifique) for support through a FRIA fellowship.

\end{document}